\def\thefigure{\thesection.\@arabic\c@figure}
\def\fps@figure{h,t}
\def\thetable{\thesection.\@arabic\c@table}
\def\fps@table{h, t}
\newcommand\@dotsep{4.5}
\def\@tocline#1#2#3#4#5#6#7{\relax
	\ifnum #1>\c@tocdepth 
	\else
	\par \addpenalty\@secpenalty\addvspace{#2}%
	\begingroup \hyphenpenalty\@M
	\@ifempty{#4}{%
		\@tempdima\csname r@tocindent\number#1\endcsname\relax
	}{%
		\@tempdima#4\relax
	}%
	\parindent\z@ \leftskip#3\relax \advance\leftskip\@tempdima\relax
	\rightskip\@pnumwidth plus1em \parfillskip-\@pnumwidth
	#5\leavevmode\hskip-\@tempdima #6\relax
	\leaders\hbox{$\m@th
		\mkern \@dotsep mu\hbox{.}\mkern \@dotsep mu$}\hfill
	\hbox to\@pnumwidth{\@tocpagenum{#7}}\par
	\nobreak
	\endgroup
	\fi}
\newtheorem{theorem}{Theorem}
\newtheorem*{theorem*}{Theorem}
\newtheorem{corollary}[theorem]{Corollary}
\newtheorem{definition}[theorem]{Definition}
\newtheorem{example}[theorem]{Example}
\newtheorem{lemma}[theorem]{Lemma}
\newtheorem{notation}[theorem]{Notation}
\newtheorem{problem}[theorem]{Problem}
\newtheorem{proposition}[theorem]{Proposition}
\newtheorem{remark}[theorem]{Remark}
\numberwithin{theorem}{section}
\numberwithin{equation}{section}
\renewcommand{\1}{{\bf 1}}
\newcommand{\ad}{{\rm ad}}
\newcommand{\Bun}{\text{{\boldmath{$\mathfrak{B}$}}}}
\newcommand{\Der}{{\rm Der}}
\newcommand{\Cl}{{\rm Cl}}
\newcommand{\ClH}{{\rm ClH}}
\newcommand{\de}{{\rm d}}
\newcommand{\ee}{{\rm e}}
\newcommand{\End}{{\rm End}}
\newcommand{\GL}{{\rm GL}}
\newcommand{\Hom}{{\rm Hom}}
\newcommand{\I}{{\rm I}}
\newcommand{\Ind}{{\rm Ind}}
\newcommand{\ind}{{\rm ind}}
\newcommand{\ie}{{\rm i}}
\newcommand{\Ker}{{\rm Ker}\,}
\newcommand{\nor}{{\rm nor}}
\renewcommand{\O}{{\mathbf{O}}}
\newcommand{\Prim}{{\rm Prim}}
\newcommand{\Rel}{{\boldmath{$\mathfrak{R}$}}}
\newcommand{\RelS}{\text{{\boldmath{$\mathcal{S}$}}}}
\newcommand{\rk}{{\rm rk}}
\newcommand{\Sp}{{\rm Sp}}
\newcommand{\SU}{{\rm SU}}
\newcommand{\spa}{{\rm span}\,}
\newcommand{\supp}{{\rm supp}\,}
\newcommand{\CC}{{\mathbb C}}
\newcommand{\NN}{{\mathbb N}}
\newcommand{\RR}{{\mathbb R}}
\newcommand{\TT}{{\mathbb T}}
\newcommand{\ZZ}{{\mathbb Z}}
\newcommand{\Ac}{{\mathcal A}}
\newcommand{\Bc}{{\mathcal B}}
\newcommand{\Cc}{{\mathcal C}}
\newcommand{\Dc}{{\mathcal D}}
\newcommand{\Ec}{{\mathcal E}}
\newcommand{\Hc}{{\mathcal H}}
\newcommand{\Kc}{{\mathcal K}}
\newcommand{\Oc}{{\mathcal O}}
\newcommand{\Pc}{{\mathcal P}}
\newcommand{\Sc}{{\mathcal S}}
\newcommand{\Tc}{{\mathcal T}}
\newcommand{\Uc}{{\mathcal U}}
\newcommand{\Vc}{{\mathcal V}}
\newcommand{\ag}{{\mathfrak a}}
\newcommand{\dg}{{\mathfrak d}}
\renewcommand{\gg}{{\mathfrak g}}
\newcommand{\hg}{{\mathfrak h}}
\newcommand{\kg}{{\mathfrak k}}
\newcommand{\mg}{{\mathfrak m}}
\renewcommand{\ng}{{\mathfrak n}}
\newcommand{\rg}{{\mathfrak r}}
\newcommand{\zg}{{\mathfrak z}}
\newcommand{\mathsout}[1]
{\bgroup\mathchoice
	{\sbox0{$\displaystyle{#1}$}%
		\usebox0\hspace{-\wd0}%
		\rule[0.5\ht0-0.5\dp0-.5pt]{\wd0}{1pt}}%
	{\sbox0{$\textstyle{#1}$}%
		\usebox0\hspace{-\wd0}%
		\rule[0.5\ht0-0.5\dp0-.5pt]{\wd0}{1pt}}%
	{\sbox0{$\scriptstyle{#1}$}%
		\usebox0\hspace{-\wd0}%
		\rule[0.5\ht0-0.5\dp0-.5pt]{\wd0}{1pt}}%
	{\sbox0{$\scriptscriptstyle{#1}$}%
		\usebox0\hspace{-\wd0}%
		\rule[0.5\ht0-0.5\dp0-.5pt]{\wd0}{1pt}}%
	\egroup}
\title[Strong $C^*$-rigidity of Heisenberg groups]
{Strong $C^*$-rigidity of the Heisenberg groups}
\author{Ingrid Belti\c t\u a}
\author{Daniel Belti\c t\u a}
\address{Institute of Mathematics ``Simion Stoilow'' of the Romanian Academy,
	P.O. Box 1-764, Bucharest, Romania}
\email{Ingrid.Beltita@imar.ro, ingrid.beltita@gmail.com}
\email{Daniel.Beltita@imar.ro, beltita@gmail.com}
\keywords{solvable Lie group; group $C^*$-algebra; co-compact radical; unitary dual}
\subjclass[2020]{Primary 22E27; Secondary 22D25, 22E25, 17B30}
\thanks{We acknowledge financial
	support from the Research Grant GAR 2023 (code 114), supported from the Donors’
	Recurrent Fund of the Romanian Academy, managed by the ”PATRIMONIU”
	Foundation.}
\begin{document}
\parskip4pt


\begin{abstract}
We prove a strong rigidity property of the Heisenberg groups, that is, they can be distinguished from any other 1-connected Lie groups via their unitary dual spaces, 
in particular via the Morita equivalence class of their group $C^*$-algebras. 
\end{abstract}
	
\maketitle


\section{Introduction}

Representation theory of a non-commutative Lie group can be modelled by objects with quite diverse flavours, such as the unitary dual space, the group $C^*$-algebra, or the universal enveloping algebra of the corresponding Lie algebra. 
A comprehensive discussion in this connection can be found in the recent book \cite{BdlH2020}. 
It is natural, and yet often nontrivial, that key properties of the Lie group under consideration are encoded by the algebraic-topological objects mentioned above. 
Several instances of that phenomenon can be found in the earlier literature. 
See for instance the study of the unitary dual of a Lie group versus its underlying discrete group in \cite{BV93}. 
As another example, related to the question whether a 1-connected Lie group can be recovered from the universal enveloping algebra of its Lie algebra, 
we mention the recent solution for nilpotent Lie groups in \cite{CPRW24}, 
as well as the results in the solvable case in \cite{VSU22}. 

In \cite{BB-IEOT} and \cite{BB-JTA}, we approached the analogous problem, with universal enveloping algebras replaced by group $C^*$-algebras. 
Thus, we provided families of mutually non-isomorphic solvable Lie groups with isomorphic $C^*$-algebras, but also quite a few examples of nilpotent Lie groups that are uniquely determined by their group $C^*$-algebras within the class of nilpotent Lie groups. 
A related problem, namely reconstruction of Lie groupoids from their $C^*$-algebras was  also recently studied in \cite{CRST21}, \cite{CDRS25}, and \cite{DS23}. 
Moreover, we have recently proved in \cite{BB_RIMS} that, within the class of all  1-connected Lie groups, every Heisenberg group is uniquely determined by the isomorphism class of its group $C^*$-algebra. 
We recall that for an arbitrary integer $n\ge 1$ the Heisenberg algebra $\hg_n$ is 
the Lie algebra with a basis $X_1,\dots,X_n,Y_1,\dots,Y_n,Z$ satisfying the canonical commutation relations 
$$[X_j,Y_k]=\delta_{jk}Z\text{ for }j,k=1,\dots,n$$
where $\delta_{jk}=1$ if $j=k$ and $\delta_{jk}=0$ otherwise, while other commutation relations among elements of the basis are trivial. 
The Heisenberg group $H_n$ is the 1-connected Lie group whose Lie algebra is $\hg_n$. 
These Lie groups have played a unique role in the development of very diverse research areas.  
For instance, one of the primary motivations of  bivariant K-theory [Ka79,Vo81], was the $C^*$-algebra extension 
$$0\to \Cc_0(\RR^\times)\otimes\Kc\to C^*(H_1)\to \Cc_0(\RR^2)\to 0$$ 
involving the $C^*$-algebra of the 3-dimensional Heisenberg $H_1$, 
which also motivated our present approach, 
cf.  Remark~\ref{partition_rem} below.

In the present paper, we obtain a sharp improvement of the above result 
from~\cite{BB_RIMS}, replacing the group $C^*$-algebra by the unitary dual space with its natural topology. 
Here is our main result.

\begin{theorem}	\label{dual_main}
	If $G$ is a 1-connected Lie group and $H$ is a Heisenberg group with a homeomorphism $\widehat{G}\simeq \widehat{H}$, then there exists a Lie group isomorphism $G\simeq H$. 
\end{theorem}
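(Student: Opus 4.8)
The plan is to read off from the homeomorphism type of $\widehat H$ a descending chain of structural constraints on $G$ and then to reconstruct $G$ by the orbit method. Write $H=H_n$, so $\dim H=2n+1$. First I would record the topology of $\widehat{H_n}$, which is classical: by the Kirillov--Brown homeomorphism $\widehat{H_n}\cong\hg_n^*/\Ad^*(H_n)$ it is the disjoint union of the family $\{\pi_\lambda:\lambda\in\RR^\times\}$ of Schr\"odinger representations, an open dense subset homeomorphic to $\RR^\times$ (a one-dimensional manifold with two components), and the closed set of characters $\widehat{H_n/[H_n,H_n]}\cong\RR^{2n}$, each character being a limit of $\pi_\lambda$ as $\lambda\to0$. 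The space is $T_1$ but not Hausdorff: every neighbourhood of a character contains an entire punctured tail $\{\pi_\lambda:0<|\lambda|<\varepsilon\}$, so no two characters can be separated. The homeomorphism invariants I want are the \emph{Hausdorff locus} $U$ (the points with a Hausdorff open neighbourhood), here $U\cong\RR^\times$, dense open and a one-manifold with two components, and its complement $Z=\widehat{H_n}\setminus U\cong\RR^{2n}$, no two of whose points are topologically separated.

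The first and cleanest reduction is that $G$ is of type I. Indeed $G$ is a $1$-connected Lie group, hence second countable, so $C^*(G)$ is separable; by Glimm's theorem a separable $C^*$-algebra is of type I exactly when its spectrum is $T_0$. Since $\widehat{H_n}$ is $T_1$, the homeomorphic space $\widehat G$ is $T_0$, whence $C^*(G)$, and therefore $G$, is of type I. This puts the Mackey machine and the orbit method at our disposal.

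The heart of the matter, and what I expect to be the main obstacle, is to upgrade ``type I'' to ``nilpotent''. Using the Levi decomposition $G=S\ltimes R$ (with $S$ semisimple, $R$ the radical) together with the co-compact radical analysis, I would argue that $S$ is trivial and $R=G$ is nilpotent: a noncompact simple factor injects principal- or discrete-series strata into $\widehat G$ whose local topology (Hausdorff pieces of the wrong dimension, or additional discrete families) cannot occur in $\RR^{2n}\sqcup\RR^\times$; a nontrivial compact factor contributes isolated points, of which $\widehat{H_n}$ has none; and a solvable but non-nilpotent $G$ yields a Hausdorff locus of the wrong structure (already the $ax+b$ group has a zero-dimensional Hausdorff locus). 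Converting these purely topological features of the fixed space $\widehat G\cong\widehat{H_n}$ into the vanishing of the Levi factor and the nilpotency of the radical is delicate, and it is here that I expect the structural results of \cite{BB_RIMS} to carry the argument.

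Once $G$ is known to be $1$-connected and nilpotent, the Kirillov--Brown homeomorphism $\widehat G\cong\gg^*/\Ad^*(G)$ makes the remaining identification concrete. The fixed points of the coadjoint action are the characters $[\gg,\gg]^\perp\cong(\gg^{ab})^*$, forming the singular set; matching $Z\cong\RR^{2n}$ gives $\dim\gg^{ab}=2n$. The one-dimensionality of $U$ forces the generic coadjoint stabilizer to be $\zg$ and hence $\dim\zg=1$, so the generic orbits have codimension one. Finally there can be no coadjoint orbit of intermediate dimension, since a maximal-dimensional orbit meeting the hyperplane $\zg^\perp$ would, being closed (nilpotent orbits are closed) and of full dimension there, exhaust the connected set $\zg^\perp$ and so contain the fixed point $0$ --- impossible. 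As $0<\dim\Oc_\ell<\dim\gg-1$ cannot occur, one gets $[\gg,\gg]\subseteq\zg$; with $\dim\zg=1$ this yields $[\gg,\gg]=\zg$, so $\gg$ is two-step nilpotent with one-dimensional centre equal to its derived algebra, and the induced skew form on $\gg^{ab}\cong\RR^{2n}$ is nondegenerate. Hence $\gg\cong\hg_n$, and by $1$-connectedness $G\cong H_n=H$.
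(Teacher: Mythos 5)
Your reduction to type~\I\ is fine (Glimm's $T_0$ theorem; the paper instead gets liminarity from the $T_1$ property via \cite[9.5.3]{Di64}), and your overall architecture --- kill the Levi factor, prove nilpotency, identify the Heisenberg algebra --- is the same as the paper's. The problem is that the two places where you say the argument is ``delicate'' and defer to \cite{BB_RIMS} are exactly where the content of this paper lies, and the sketches you give there contain claims that are false or unsupported.

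First, the assertion that a nontrivial compact Levi factor ``contributes isolated points'' to $\widehat{G}$ is false unless that factor acts trivially on the radical (i.e.\ splits off as a direct factor). When a compact semisimple $K$ acts nontrivially on the radical $R$, the representations of $G=R\rtimes K$ induced from characters $\chi_t$ of the nilradical degenerate, as $t\to 0$, onto a whole family of representations factoring through $G/N$, and no isolated points need appear; ruling out this configuration is precisely what Proposition~\ref{cocomp} does, via the deformation technique for induced representations (Lemmas \ref{nonfact}, \ref{modelind}, \ref{co-L5}) and the notion of ``1-dimensional type'' (Example~\ref{co-Ex}). Likewise, for solvable non-nilpotent $G$, ``a Hausdorff locus of the wrong structure'' is not an argument, and the results of \cite{BB_RIMS} cannot carry it by themselves: \cite[Thm. 1.1]{BB_RIMS} characterizes nilpotency by closedness and simple connectedness of \emph{coadjoint orbits}, and the whole difficulty is to extract those orbit properties from the dual topology alone. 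The paper does this through Heisenberg partitions (Lemmas \ref{L0}, \ref{L4}), the Puk\'anszky correspondence, a character-deformation argument combined with continuity of induction (Lemmas \ref{L8}--\ref{L10}), and Puk\'anszky's theorem that a $T_1$ dual forces type~R, before \cite[Thm. 1.1]{BB_RIMS} and Hall's criterion can be applied (Lemma~\ref{L12}); none of this is present in your sketch. (For the noncompact semisimple alternative, the paper uses the splitting $G=G_1\times(S\rtimes K)$ of \cite{BB_RIMS} together with Mili\v ci\'c's theorem \cite[Cor. 5.8]{Mi74} that nets in the dual of a semisimple group have only finitely many limit points; your ``strata of the wrong dimension'' remark is not a substitute.)

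Second, your final step (nilpotent $\Rightarrow$ Heisenberg), which the paper disposes of by citing \cite[Thm. 1.2]{BB-JTA}, has two gaps of its own. (a) ``Matching $Z\cong\RR^{2n}$ gives $\dim\gg^{ab}=2n$'' presupposes that the non-Hausdorff locus of $\widehat{G}$ coincides with the set of characters; the inclusion of the characters into $Z$ can be extracted from the maximality argument of Lemma~\ref{L0} (the character space is a closed connected Hausdorff subset of dimension $\ge 2$), but the reverse inclusion --- that no positive-dimensional orbit gives a point of $Z$ --- is nowhere justified, and it is needed both for the dimension count and for your stabilizer argument. (b) The exclusion of intermediate-dimensional orbits is a non sequitur: your connectedness argument only shows that no orbit of top dimension $\dim\gg-1$ lies inside the hyperplane $\zg^\perp$; it says nothing about orbits of dimension $2,4,\dots,\dim\gg-3$ contained in $\zg^\perp$, and excluding those is exactly what is required to conclude $[\gg,\gg]\subseteq\zg$. (A workable repair would argue that the space of positive-dimensional orbits, $(\gg^*\setminus[\gg,\gg]^\perp)/G$, is homeomorphic to the disconnected space $\RR^\times$, whereas $\gg^*\setminus[\gg,\gg]^\perp$ is connected whenever $\dim[\gg,\gg]\ge2$; this forces $\dim[\gg,\gg]=1$ and then nilpotency gives $[\gg,\gg]\subseteq\zg$ --- but this again rests on the unproved identification in (a).) So as written, both the middle of the proof and its endgame are incomplete.
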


\begin{corollary}	
 The Heisenberg groups can be distinguished from any other\break 
 1-connected Lie groups via the Morita equivalence class of their group $C^*$-algebras. 
\end{corollary}

Such a strong rigidity property of Heisenberg groups is remarkable since the unitary dual space of a Lie group encodes less information than the universal enveloping algebra or the group $C^*$-algebra, as the following examples show:

\begin
{customthm}{A}\label{A}
\normalfont 
If the unitary dual spaces of two 1-connected Lie groups are homeomorphic, 
then these groups may not be isomorphic even under the additional assumption that they are nilpotent (Example~\ref{E14}).  
If they are not isomorphic then, according to \cite[Cor. 0.12]{CPRW24}, the corresponding universal enveloping algebras are nonisomorphic, too. 
\end
{customthm}

\begin
{customthm}{B}\label{B}
	\normalfont  
Similarly, if the unitary dual spaces of two 1-connected Lie groups are homeomorphic, then their corresponding group $C^*$-algebras may be nonisomorphic. 
For instance, the Lie groups $\SU(2)$ and $\SU(3)$ 
 are compact, hence their unitary dual spaces are countable discrete topological spaces, in particular homeomorphic to each other. 
On the other hand, $\SU(2)$ has exactly one irreducible representation on an $n$-dimensional complex Hilbert space for every $n\ge 1$, 
and this property is shared by its group $C^*$-algebra. 
However, there exist several nonequivalent irreducible representations of $\SU(3)$ on an $n$-dimensional complex Hilbert space for some $n\ge 1$, 
while there are no irreducible representations for other values $n\ge 1$, 
and this property is again shared by its group $C^*$-algebra. 
See e.g., \cite{Ha15}.
Therefore the group $C^*$-algebras of $\SU(2)$ and $\SU(3)$ are not isomorphic to each other. 
\end
{customthm}

The object of the present paper is to give the proof of Theorem~\ref{dual_main}. 
Using \cite{MoRo76}, \cite{Pu78}, and one of our previous results in \cite{BB_RIMS}, which shows that the unitary dual of the Heisenberg group cannot be written as the Cartesian product of two non-trivial sets, we reduce the problem to the case where $G$ has co-compact radical. 
Thus, we  prove Theorem~\ref{dual_main} in two steps that can be briefly described as follows:
\begin{enumerate}[{Step 1:}]
	\item Under the hypothesis of Theorem~\ref{dual_main}, if the Lie group $G$ 
	has co-compact radical, then  the group $G$ is actually solvable  
	(Proposition~\ref{cocomp}). 
	\item Theorem~\ref{dual_main} holds true under the additional assumption that $G$ is a solvable Lie group 
	(Proposition~\ref{P13}).
\end{enumerate}
There is a common idea in both steps above, namely, the `size' of  limit sets of certain sequences in the unitary spectrum gives information on the group under consideration.
Nevertheless, these two steps are essentially independent of each other and require rather different panels of techniques. 
For this reason we may go in the natural direction of increasing the degree of generality. 
Specifically, we first study solvable Lie groups, thereby performing Step 2 in Section~\ref{sect:solvtypeI}. Here we use our
characterization of nilpotent Lie groups among the solvable Lie groups in terms of topological properties of their coadjoint orbits, cf. \cite[Thm. 1.1]{BB_RIMS}.  Nevertheless, as the unitary dual of a Lie group encodes less information than the group $C^*$-algebra, finer tools are also needed.
Then, we put to work the powerful machinery of induced representation to study the Lie groups with co-compact radical in Section~\ref{sect:cocomp}, 
which realizes Step 1 above. 

In some more detail, the contents of this paper are as follows. 
Section~\ref{sect: preltop} describes certain topological spaces that are central to our investigation. 
Results on induced representations of locally compact groups, which are needed in the next sections and could not be located in the literature, are proved in Section~\ref{subsect:ind}.
In Section~\ref{sect:prelsolv} we recall the basic setting of the Puk\'anszky correspondence in representation theory of solvable Lie groups (cf. \cite{Pu71}, \cite{Pu73}) and establish some additional auxiliary facts for later use. 
Section~\ref{sect:solvtypeI} contains the proof of the special case of Theorem~\ref{dual_main} for solvable Lie groups (Proposition~\ref{P13}), 
that is, Step~2 above.  
There is also a class of examples that show that even that special case fails to be true with the Heisenberg group replaced by an arbitrary nilpotent Lie group. 
More specifically, we provide a family of mutually nonisomorphic 7-dimensional nilpotent Lie groups whose unitary dual spaces are mutually homeomorphic (Example~\ref{E14}). 
Section~\ref{sect:cocomp} achieves Step~1 above (Proposition~\ref{cocomp}), 
where a key role is played a deformation technique for induced representations that is developed in Section~\ref{subsect:ind}. 
Finally, Section~\ref{sect:proof} gives the proof of Theorem~\ref{dual_main} along with some related open questions. 
\subsection*{General notation and terminology} \hfill\\
We denote  $\TT:=\{z\in\CC\mid \vert z\vert=1\}$ and  $\RR^\times:=\RR\setminus\{0\}$.
Throughout this paper, `1-connected' means connected and simply connected. 
Every 1-connected Lie group is denoted by an upper case Roman letter, 
and its Lie algebra is denoted by the corresponding lower case Gothic letter. 
For any Lie algebra $\gg$ with its linear dual space $\gg^*$ we denote by $\langle\cdot,\cdot\rangle\colon\gg^*\times\gg\to\RR$ the corresponding duality pairing. 
We often denote the group actions simply by juxtaposition, 
in particular this is the case for the coadjoint action $G\times\gg^*\to\gg^*$, $(g,\xi)\mapsto g\xi$.

\section{Heisenberg partitions}
\label{sect: preltop}

In this section
we study topological spaces $X$ that admit a homeomorphism $\psi\colon X\to\widehat{H}$ 
onto the unitary dual space of a Heisenberg group~$H$. 
As a first step towards proving the dual rigidity property of Heisenberg groups (Theorem~\ref{dual_main}), we show in Lemma~\ref{L0} that, if $Z$ is the center of $H$ 
and the unitary dual space $\widehat{H/Z}$ is embedded as the closed subset of $\widehat{H}$ corresponding to the 1-dimensional unitary representations of $H$, then the subset $\psi^{-1}(\widehat{H/Z})\subseteq X$ is independent on the homeomorphism $\psi$. 
Equivalently, every homeomorphism of $\widehat{H}$ preserves the partition $\widehat{H}=(\widehat{H}\setminus \widehat{H/Z})\sqcup\widehat{H/Z}$. 
That property is not trivial since not every homeomorphism of $\widehat{H}$ is defined by an automorphism of~$H$.

The following lemma is suggested by the description of the topology of the unitary duals of Heisenberg groups, 
first given in \cite[Prop. 1]{Di60} for lower dimensional groups.  
We need this result in Example~\ref{E14} for the space $Y$ equal to the unitary dual of a certain 6-dimensional nilpotent Lie group, 
and for $Y=\RR^{2n}$ in Lemma~\ref{L0}. 

\begin{lemma}
	\label{L00}
	Let $X$ and $Y$ be two topological spaces endowed with injective maps $\psi_1\colon\RR^\times\to X$ and $\psi_2\colon Y\to X$ 
	satisfying the following conditions: 
	\begin{enumerate}[{\rm(a)}]
		\item\label{L00_item_a} The subset $\Gamma_1:=\psi_1(\RR^\times)\subseteq X$ is open and the mapping  $\psi_1\colon\RR^\times\to\Gamma_1$ is a homeomorphism. 
		\item\label{L00_item_b} For the subset $\Gamma_2:=\psi_2(Y)\subseteq X$ 
		we have $\Gamma_2=X\setminus\Gamma_1$ and the mapping   
		$\psi_2\colon Y\to\Gamma_2$ is a homeomorphism. 
		\item\label{L00_item_c} For every subset $A\subseteq\RR^\times$ we have
		\begin{equation*}
			\text{$0\in\RR$ is an accumulation point for $A$}
			\iff \Gamma_2\cap \overline{\psi_1(A)}\ne\emptyset \iff \Gamma_2\subseteq\overline{\psi_1(A)}.
		\end{equation*}
	\end{enumerate}
	 Then a subset $F\subseteq X$ is closed if and only if it satisfies the following conditions: 
	 \begin{enumerate}[{\rm(i)}]
	 	\item\label{L00_item1} $\psi_1^{-1}(F)$ is closed in $\RR^\times$. 
	 	\item\label{L00_item2} $\psi_2^{-1}(F)$ is closed in $Y$. 
	 	\item\label{L00_item3} if $0\in\RR$ is an accumulation point for $\psi_1^{-1}(F)$, then $\Gamma_2\subseteq F$. 
	 \end{enumerate}
 Moreover, the  topology of $X$ is $T_1$ if and only if the topology of $Y$ is $T_1$. 
\end{lemma}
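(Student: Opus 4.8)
The plan is to establish the closed-set characterization first and then read off the $T_1$ equivalence as an immediate consequence. Throughout I would exploit the decomposition $X=\Gamma_1\sqcup\Gamma_2$, where $\Gamma_1$ is open and $\Gamma_2=X\setminus\Gamma_1$ is closed, together with two elementary facts: for an open subset $U\subseteq X$ one has $\overline{F}\cap U=\mathrm{cl}_U(F\cap U)$; and a subset of $X$ that is closed in the closed subspace $\Gamma_2$ is already closed in $X$. Here $\overline{\,\cdot\,}$ denotes closure in $X$ and $\mathrm{cl}_{(\cdot)}$ denotes closure in a subspace.

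For the direction asserting that a closed $F$ satisfies \eqref{L00_item1}--\eqref{L00_item3}, conditions \eqref{L00_item1} and \eqref{L00_item2} are immediate from continuity of $\psi_1$ and $\psi_2$ (the latter regarded as a map into $X$ via the inclusion $\Gamma_2\hookrightarrow X$), since preimages of closed sets under continuous maps are closed. For \eqref{L00_item3}, if $0$ is an accumulation point of $A:=\psi_1^{-1}(F)$, then condition \eqref{L00_item_c} yields $\Gamma_2\subseteq\overline{\psi_1(A)}$; but $\psi_1(A)=F\cap\Gamma_1\subseteq F$, so $\overline{\psi_1(A)}\subseteq\overline{F}=F$, whence $\Gamma_2\subseteq F$.

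The substantive direction is the converse. Assuming \eqref{L00_item1}--\eqref{L00_item3}, I would prove $\overline{F}\subseteq F$ by writing $\overline{F}=\overline{F\cap\Gamma_1}\cup\overline{F\cap\Gamma_2}$ and treating the two pieces separately. The piece $\overline{F\cap\Gamma_2}$ is handled purely by \eqref{L00_item2}: since $F\cap\Gamma_2$ is closed in $\Gamma_2$ and $\Gamma_2$ is closed in $X$, it is already closed, so $\overline{F\cap\Gamma_2}=F\cap\Gamma_2\subseteq F$. For $\overline{F\cap\Gamma_1}=\overline{\psi_1(A)}$ I would split along $\Gamma_1$ and $\Gamma_2$. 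The part inside the open set $\Gamma_1$ equals $\mathrm{cl}_{\Gamma_1}(\psi_1(A))=\psi_1(\mathrm{cl}_{\RR^\times}(A))=\psi_1(A)\subseteq F$ by \eqref{L00_item1}. The part inside $\Gamma_2$ is exactly where conditions \eqref{L00_item_c} and \eqref{L00_item3} must interact, and this is the main obstacle: if $0$ is not an accumulation point of $A$, then \eqref{L00_item_c} forces $\overline{\psi_1(A)}\cap\Gamma_2=\emptyset$; if $0$ is an accumulation point of $A$, then \eqref{L00_item_c} gives $\overline{\psi_1(A)}\cap\Gamma_2=\Gamma_2$ while \eqref{L00_item3} gives $\Gamma_2\subseteq F$, so again $\overline{\psi_1(A)}\cap\Gamma_2\subseteq F$. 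Combining the pieces yields $\overline{F}\subseteq F$, so $F$ is closed.

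Finally, the $T_1$ equivalence follows from the characterization. If $X$ is $T_1$, then the subspace $\Gamma_2$ is $T_1$, hence so is the homeomorphic space $Y$. Conversely, assuming $Y$ is $T_1$, I would check that every singleton $\{x\}\subseteq X$ is closed by verifying \eqref{L00_item1}--\eqref{L00_item3}. If $x=\psi_1(t)\in\Gamma_1$, then $\psi_1^{-1}(\{x\})=\{t\}$ is closed in $\RR^\times$, $\psi_2^{-1}(\{x\})=\emptyset$, and $0$ is not an accumulation point of the one-point set $\{t\}$, so \eqref{L00_item3} is vacuous. If $x=\psi_2(y)\in\Gamma_2$, then $\psi_1^{-1}(\{x\})=\emptyset$, $\psi_2^{-1}(\{x\})=\{y\}$ is closed in $Y$ by hypothesis, and \eqref{L00_item3} is again vacuous. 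In either case $\{x\}$ is closed, so $X$ is $T_1$.
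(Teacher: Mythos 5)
Your proposal is correct and follows essentially the same route as the paper's proof: both directions hinge on the decomposition $\overline{F}=\overline{F\cap\Gamma_1}\cup\overline{F\cap\Gamma_2}$, with $F\cap\Gamma_2$ handled via \eqref{L00_item_b} and closedness of $\Gamma_2$, and $\overline{\psi_1(A)}\cap\Gamma_2$ handled by the same case analysis on whether $0$ accumulates on $A$, using \eqref{L00_item_c} and \eqref{L00_item3}. The only (harmless) variation is in the $T_1$ converse for points of $\Gamma_1$, where you invoke the just-proven closed-set characterization directly, while the paper argues via \eqref{L00_item_c} and the Hausdorffness of $\Gamma_1$; both are immediate.
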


\begin{proof}
If $F\subseteq X$ is closed, then $F\cap\Gamma_j$ is relatively closed in $\Gamma_j$ for $j=1,2$. 
Since the mappings $\psi_1\colon\RR^\times\to\Gamma_1$ and 	$\psi_2\colon Y\to\Gamma_2$ are homeomorphisms, we obtain \eqref{L00_item1}--\eqref{L00_item2}. 
If  $0\in\RR$ is an accumulation point for $A:=\psi_1^{-1}(F)\subseteq \RR^\times$, 
then by \eqref{L00_item_c} we have 
$\Gamma_2\subseteq \overline{\psi_1(A)}\subseteq \overline{F}=F$. 

Conversely, let $F\subseteq X$  satisfying \eqref{L00_item1}, \eqref{L00_item2}, and \eqref{L00_item3}. 
We will prove that $F$ is closed in $X$. 
To this end we denote  $A:=\psi_1^{-1}(F)\subseteq \RR^\times$ and $B:=\psi_2^{-1}(F)\subseteq Y$, 
hence $F\cap \Gamma_1=\psi_1(A)$ and $F\cap\Gamma_2=\psi_2(B)$. 
Here the subset $B\subseteq Y$ is closed by \eqref{L00_item2} hence, 
since 	$\psi_2\colon Y\to\Gamma_2$ is a homeomorphism, it follows that $F\cap\Gamma_2$ is relatively closed in $\Gamma_2$. 
As the subset $\Gamma_2\subseteq X$ is closed by \eqref{L00_item1}--\eqref{L00_item2}, 
it then follows that $F\cap\Gamma_2$ is closed in $X$. 

Furthermore, since $X=\Gamma_1\sqcup\Gamma_2$, we have $F=(F\cap \Gamma_1)\sqcup(F\cap \Gamma_2)$, 
hence 
\begin{equation*}
\overline{F}=\overline{F\cap \Gamma_1}\cup\overline{F\cap \Gamma_2} 
=\overline{\psi_1(A)}\cup\overline{F\cap \Gamma_2} 
=\overline{\psi_1(A)}\cup(F\cap \Gamma_2) 
\end{equation*}
since we established above that $F\cap\Gamma_2$ is closed in $X$. 
Moreover, the subset $A\subseteq\RR^\times$ is closed by \eqref{L00_item1} 
and the mapping $\psi_1\colon\RR^\times \to\Gamma_1$ is a homeomorphism, 
hence $\psi_1(A)$ is relatively closed in $\Gamma_1$, 
which is equivalent to $\psi_1(A)=\overline{\psi_1(A)}\cap\Gamma_1$. 
Since $X=\Gamma_1\sqcup\Gamma_2$, we further obtain 
\begin{equation}
	\label{L00_proof_eq1}
	\overline{\psi_1(A)}=\psi_1(A)\sqcup(\overline{\psi_1(A)}\cap\Gamma_2).
\end{equation} 
Then the above equality implies 
\begin{equation}
\label{L00_proof_eq2}
	\overline{F}=\psi_1(A)\cup ((F\cup \overline{\psi_1(A)})\cap \Gamma_2). 
\end{equation}
Now, if $0\in\RR$ is an accumulation point for $A$  
then, by \eqref{L00_item3}, we have $\Gamma_2\subseteq F$, 
hence~\eqref{L00_proof_eq2} implies $\overline{F}=\psi_1(A)\cup \Gamma_2\subseteq F$, 
which shows that $F$ is closed in $X$. 
On the other hand, if $0\in\RR$ is not an accumulation point for $A$, then \eqref{L00_item_c} implies $\Gamma_2\cap\overline{\psi_1(A)}=\emptyset$, 
hence \eqref{L00_proof_eq2} shows that $\overline{F}=\psi_1(A)\cup (F\cap \Gamma_2)\subseteq F$,
and we obtain again that  $F$ is closed in $X$. 

Finally, if the topology of $X$ is $T_1$ then the topology of $Y$ is $T_1$ since $Y$ is homeomorphic to the closed subset $\Gamma_2$ of $X$. 
Conversely, if the topology of $Y$ is $T_1$, let $x\in X=\Gamma_1\sqcup\Gamma_2$  arbitrary. 
If $x\in\Gamma_2$, then the singleton $\{x\}$ is a closed subset of $\Gamma_2$ since $Y$ is $T_1$ and is homeomorphic to $\Gamma_2$. 
Since $\Gamma_2$ is closed in $X$, it follows that $\{x\}$ is closed in $X$. 
On the other hand, if $x\in\Gamma_1$, we define $a:=\psi_1^{-1}(x)\in\RR^\times$ 
and $A:=\{a\}\subseteq\RR^\times$. 
Then $0\in\RR$ is not an accumulation point for $A$, then \eqref{L00_item_c} implies $\Gamma_2\cap\overline{\psi_1(A)}=\emptyset$, 
hence $\overline{\psi_1(A)}\subseteq\Gamma_1$, 
that is, $\overline{\{x\}}\subseteq\Gamma_1$. 
Since the relative topology of $\Gamma_1$ is Hausdorff, we obtain $\overline{\{x\}}=\{x\}$, and this completes the proof. 
\end{proof}

\begin{remark}
\label{L00_rem}
\normalfont
In Lemma~\ref{L00}, the description of the closed subsets of $X$ shows that the topology of $X$ is uniquely determined by the data $(Y,\psi_1,\psi_2)$ subject to the conditions \eqref{L00_item_a}, \eqref{L00_item_b}, and \eqref{L00_item_c}.  
\end{remark}

\begin{notation}
	\normalfont 
	For an arbitrary topological space $X$ we denote by $\ClH(X)$ the set of all closed subsets of $X$ which are connected and Hausdorff with respect to the relative topology. 
	The set $\ClH(X)$ is regarded as a partially ordered set with respect to the set inclusion. 
\end{notation}

We now specialize the setting of Lemma~\ref{L00} for $Y=\RR^{2n}$. 

\begin{lemma}
\label{L0}
For every topological space $X$ there exists at most one partition $X=\Gamma_1\sqcup\Gamma_2$ with the following properties: 
\begin{enumerate}[{\rm(i)}]
	\item\label{L0_item1} $\Gamma_1\subseteq X$ is an open subset and there exists a homeomorphism $\psi_1\colon\RR^\times\to\Gamma_1$. 
	\item\label{L0_item2} There exists a homeomorphism $\psi_2\colon\RR^{2n}\to\Gamma_2$. 
	\item\label{L0_item3} For every subset $A\subseteq\RR^\times$ we have
	\begin{equation*}
		\text{$0\in\RR$ is an accumulation point for $A$}
		\iff \Gamma_2\cap \overline{\psi_1(A)}\ne\emptyset \iff \Gamma_2\subseteq\overline{\psi_1(A)}.
	\end{equation*}
\end{enumerate}
Moreover, for a partition as above, $\Gamma_2$ is the only maximal element of $\ClH(X)$. 
\end{lemma}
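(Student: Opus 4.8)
The plan is to derive the uniqueness of the partition from the final (``moreover'') assertion. Since $\ClH(X)$ depends only on the topology of $X$, once we know that any partition satisfying \eqref{L0_item1}--\eqref{L0_item3} forces $\Gamma_2$ to be the unique maximal element of $\ClH(X)$, the set $\Gamma_2$ is determined by $X$ alone; two such partitions then share the same $\Gamma_2$, and hence the same $\Gamma_1=X\setminus\Gamma_2$. So I would prove the ``moreover'' part and read off uniqueness from it. I first note that the hypotheses here are precisely an instance of Lemma~\ref{L00} with $Y=\RR^{2n}$, so I may freely use its description of the closed subsets of $X$, together with the fact that $\Gamma_2=X\setminus\Gamma_1$ is closed.

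Clearly $\Gamma_2\in\ClH(X)$: it is closed, and it is connected and Hausdorff in its relative topology because $\psi_2$ is a homeomorphism from $\RR^{2n}$ onto it. The core of the argument is a dichotomy for a set $F\in\ClH(X)$ that meets $\Gamma_1$, governed by the behaviour at $0$ of $A:=\psi_1^{-1}(F)\subseteq\RR^\times$, which is closed because $F$ is (Lemma~\ref{L00}). If $0$ is an accumulation point of $A$, then \eqref{L0_item3} forces $\Gamma_2\subseteq F$, and I claim $F$ is not Hausdorff: picking $a_n\in A$ with $a_n\to 0$, I would show that $\psi_1(a_n)$ converges to \emph{every} $z\in\Gamma_2$. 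Indeed, if some open neighbourhood $V$ of $z$ omitted infinitely many $\psi_1(a_n)$, then $\psi_1^{-1}(X\setminus V)$ would be a closed subset of $\RR^\times$ with $0$ as accumulation point, so \eqref{L0_item3} would give $\Gamma_2\subseteq X\setminus V$, contradicting $z\in V$. As $\RR^{2n}$ has at least two points, the sequence $(\psi_1(a_n))$ would then have two distinct limits inside $F$, which is impossible in a Hausdorff space. I expect this non-Hausdorff step to be the main obstacle, since it is exactly what prevents enlarging $\Gamma_2$ across the limiting behaviour at $0$.

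In the complementary case, where $0$ is not an accumulation point of $A$, condition \eqref{L0_item3} gives $\Gamma_2\cap\overline{\psi_1(A)}=\emptyset$, so $\psi_1(A)=F\cap\Gamma_1$ is already closed in $X$; together with the closed set $F\cap\Gamma_2$ this writes $F=(F\cap\Gamma_1)\sqcup(F\cap\Gamma_2)$ as a disjoint union of two closed subsets, whence $F$ is disconnected whenever both pieces are nonempty. Combining the two cases, any $F\in\ClH(X)$ with $F\cap\Gamma_2\ne\emptyset$ must be contained in $\Gamma_2$; in particular nothing in $\ClH(X)$ strictly contains $\Gamma_2$, so $\Gamma_2$ is maximal.

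It remains to rule out maximal elements disjoint from $\Gamma_2$. If $F\in\ClH(X)$ with $F\subseteq\Gamma_1$, then $A=\psi_1^{-1}(F)$ is a nonempty closed connected subset of $\RR^\times$, hence an interval lying in one component of $\RR^\times$, and $0$ is not an accumulation point of $A$ (otherwise \eqref{L0_item3} would force $\Gamma_2\subseteq F$). I would then enlarge $A$ to a strictly larger closed interval still bounded away from $0$, producing a member of $\ClH(X)$ that strictly contains $F$; thus $F$ is not maximal. Consequently every maximal element of $\ClH(X)$ meets $\Gamma_2$, hence is contained in $\Gamma_2$, hence equals $\Gamma_2$ by maximality. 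This proves that $\Gamma_2$ is the unique maximal element of $\ClH(X)$, and with it the uniqueness of the partition.
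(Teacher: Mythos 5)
Your proof is correct and follows essentially the same route as the paper's: show that conditions \eqref{L0_item1}--\eqref{L0_item3} force $\Gamma_2$ to be the unique maximal element of $\ClH(X)$ (via the Hausdorff obstruction when $0$ accumulates in $\psi_1^{-1}(F)$, the closed-partition/connectedness argument when it does not, and the enlargement of closed intervals in $\RR^\times$ to exclude maximal elements inside $\Gamma_1$), and then read off uniqueness of the partition since $\ClH(X)$ is a topological invariant. In fact your sequence argument spells out in full the non-Hausdorff step that the paper only asserts, so nothing is missing.
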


\begin{proof}
Let $X=\Gamma_1\sqcup\Gamma_2$ be a partition as in the statement. 
	
Step 1: We prove that $\Gamma_2$ is a maximal element of $\ClH(X)$. 

It is clear that $\Gamma_2\in\ClH(X)$. 
Assume that there exists 
$\Gamma\in\ClH(X)$ with $\Gamma_2\subsetneqq\Gamma$. 
Then necessarily $\Gamma\cap\Gamma_1\ne\emptyset$. 
Let us denote $A:=\psi_1^{-1}(\Gamma\cap\Gamma_1)\subseteq\RR^\times$. 
Then $0\in\RR$ is not an accumulation point of $A$.
Indeed, if we assume that
$0\in\RR$ is an accumulation point of $A$, then, by \eqref{L0_item3}, 
the relative topology of $\Gamma$ is not Hausdorff, which is a contradiction with  $\Gamma\in\ClH(X)$.

Since $\Gamma\subseteq X$ is closed and $A=\psi_1^{-1}(\Gamma\cap\Gamma_1)$, it follows that the subset $A\subseteq \RR^\times$ is closed. 
Since $0\in A$ is not an accumulation point of $A$, 
it follows by the description of the closed subsets of $X$ in Lemma~\ref{L00}
that $\psi_1(A)$ ($=\Gamma\cap\Gamma_1$)
is closed in $X$ 
hence the subset $\Gamma\cap\Gamma_1\subseteq\Gamma$ is closed with respect to the relative topology of $\Gamma$. 
Then the equality $\Gamma=(\Gamma\cap\Gamma_1)\sqcup(\Gamma\cap\Gamma_2)$ is a partition into closed subsets with respect to the relative topology of $\Gamma$.   
On the other hand, $\Gamma$ is connected since  $\Gamma\in\ClH(X)$. 
Since  $\Gamma\cap\Gamma_1\ne\emptyset$, we then obtain  $\Gamma\cap\Gamma_1=\Gamma$, 
that is, $\Gamma\subseteq\Gamma_1$.  
But this is a contradiction with the assumption $\Gamma_2\subset\Gamma$.

Step 2: We prove that $\Gamma_2$ is the only maximal element of $\ClH(X)$. 

Let us assume that $\Gamma\in\ClH(X)$ is a maximal element with $\Gamma\ne\Gamma_2$. 
If $\Gamma\cap\Gamma_1=\emptyset$, then $\Gamma\subseteq\Gamma_2$, 
which is a contradiction with the maximality property of $\Gamma$. 
Thus necessarily $\Gamma\cap\Gamma_1\ne\emptyset$. 
Let us denote $A:=\psi_1^{-1}(\Gamma\cap\Gamma_1)\subseteq\RR^\times$ as in Step 1. 
If $0\in\RR$ is an accumulation point of $A$, then, by \eqref{L0_item3}, 
 $\Gamma_2\subseteq\overline{\psi_1(A)}\subseteq\Gamma$, 
 which is again a contradiction as above. 
 Therefore $0\in\RR$ is not an accumulation point of $A$. 
 But $\RR^\times$ has no maximal closed connected subset that does not accumulate to $0\in\RR$, so we obtain a contradiction with the maximality property of $\Gamma$.
 
 Step 3: Let us assume that $X=\Gamma_1'\sqcup\Gamma_2'$ is another partition as in the statement. 
 
By Steps 1--2 above, we obtain $\Gamma_2=\Gamma_2'$, and then $\Gamma_1=\Gamma_1'$ as well. 
\end{proof}

\begin{definition}
\label{partition}
\normalfont
A \emph{Heisenberg partition} of a topological space $X$ 
is a partition $X=\Gamma_1\sqcup\Gamma_2$  
satisfying the conditions \eqref{L0_item1}--\eqref{L0_item3} 
in Lemma~\ref{L0}. 
\end{definition}

\begin{remark}
\label{partition_rem}
\normalfont
In connection with Definition~\ref{partition}, we note the following.
\begin{enumerate}[{\rm(i)}]
	\item\label{partition_rem_item1} The name `Heisenberg partition' is motivated by the  natural one-to-one correspondence between pairs $(\psi_1,\psi_2)$ defining Heisenberg partitions of $X$ and homeomorphisms 
	$\psi\colon \widehat{H}\to X$, where $H$ is the Heisenberg group with $\dim H=2n+1$. 
	More specifically, we recall the canonical parameterization $\widehat{H}=\RR^\times\sqcup\RR^{2n}$ (see e.g., \cite{Di60} and \cite{LuTu11}), 
	and for any mapping $\psi\colon \widehat{H}\to X$ let us define  
	$\psi_1:=\psi\vert_{\RR^\times}$ and $\psi_2:=\psi\vert_{\RR^{2n}}$. 
	Then Remark~\ref{L00_rem} shows that the pair $(\psi_1,\psi_2)$ defines a Heisenberg partition of $X$ if and only if the mapping $\psi\colon \widehat{H}\to X$ is bijective and has the property that for every $F\subseteq \widehat{H}$ 
	we have $F$ closed in $\widehat{H}$ if and only if $\psi(F)$ is closed in $X$, 
	that is, if and only if $\psi$ is a homeomorphism. 
	\item\label{partition_rem_item2}  Every finite subset of $X$ is closed, i.e., the topological space $X$ is $T_1$, by Lemma~\ref{L00}. 
	\item\label{partition_rem_item3}  The integer $n\ge 1$ intrinsically depends on $X$, by Brouwer's theorem on invariance of dimension.
\end{enumerate}
\end{remark}
 
\begin{lemma}
\label{L4}
If $X=\Gamma_1\sqcup\Gamma_2$ is a Heisenberg partition of a topological space, 
then for every continuous injective map $\gamma\colon\TT\to X$ we have $\gamma(\TT)\subseteq \Gamma_2$. 
\end{lemma}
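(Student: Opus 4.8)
The plan is to argue by contradiction, showing that if $\gamma(\TT)$ meets $\Gamma_1$ then one can manufacture a strictly monotone real function on an interval whose two endpoint limits are both forced to equal $0$, which is absurd. I would fix the homeomorphism $\psi_1\colon\RR^\times\to\Gamma_1$ provided by condition \eqref{L0_item1} of the Heisenberg partition and set $U:=\gamma^{-1}(\Gamma_1)$, which is open in $\TT$ since $\Gamma_1$ is open; assume $U\ne\emptyset$. First I would dispose of the degenerate possibility $U=\TT$, i.e.\ $\gamma(\TT)\subseteq\Gamma_1$: then $\psi_1^{-1}\circ\gamma$ is a continuous injection of $\TT$ into $\RR^\times\subseteq\RR$, and there is none, because its image would be a nondegenerate compact interval $[m,M]$ and the intermediate value theorem, applied to the two arcs of $\TT$ cut off by a maximum and a minimum point, would produce two distinct points with the same value $c\in(m,M)$. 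Hence $U\ne\TT$.

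With $U$ a proper nonempty open subset of $\TT$, each connected component $J$ of $U$ is an open arc whose boundary points lie in $\TT\setminus U=\gamma^{-1}(\Gamma_2)$, so $\gamma$ carries them into $\Gamma_2$. Identifying $J$ with an open interval and setting $f:=\psi_1^{-1}\circ(\gamma|_J)\colon J\to\RR^\times$, I note that $f$ is a continuous injection of an interval, hence strictly monotone; thus at each endpoint $b$ of $J$ the limit $L_b:=\lim_{x\to b,\ x\in J}f(x)$ exists in $[-\infty,+\infty]$, and the two endpoint limits are necessarily distinct. The heart of the argument is to show $L_b=0$ at every endpoint: continuity of $\gamma$ gives $\psi_1(f(x))=\gamma(x)\to\gamma(b)\in\Gamma_2$ as $x\to b$ in $J$, so for every neighbourhood $V$ of $b$ the point $\gamma(b)$ lies in $\Gamma_2\cap\overline{\psi_1(A)}$ with $A:=f(J\cap V)$, whence condition \eqref{L0_item3} forces $0$ to be an accumulation point of $A$. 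But if $L_b\ne 0$, then for $V$ small the values of $f$ on $J\cap V$ are bounded away from $0$, contradicting that accumulation. Since a strictly monotone function on an interval cannot take equal limits at its two ends, this yields the desired contradiction, so $U=\emptyset$ and $\gamma(\TT)\subseteq\Gamma_2$.

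The main obstacle I anticipate is precisely this endpoint analysis, because $X$ need not be Hausdorff and the only bridge between the two strata $\Gamma_1\cong\RR^\times$ and $\Gamma_2$ is the accumulation-at-$0$ mechanism recorded in condition \eqref{L0_item3}; one cannot analyse the convergence directly inside $\Gamma_1$. The care required is to convert ``$f$ has monotone limit $L_b\ne 0$'' into ``$f$ stays uniformly away from $0$ near $b$'', treating the finite and the infinite values of $L_b$ on the same footing, and to handle the borderline configuration $U=\TT\setminus\{b\}$, where the single arc approaches the same boundary point $b$ from both ends while its two one-sided limits are forced to be distinct and each equal to $0$.
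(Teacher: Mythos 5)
Your proof is correct, but it takes a genuinely different route from the paper's. The paper argues at an extremum: since $\gamma(\TT)$ is compact, it invokes \cite[Prop. 5.3]{BB17} to conclude that $\gamma(\TT)\cap\Gamma_1$ is closed and bounded in $\Gamma_1\simeq\RR^\times$, picks $z_0\in\TT$ where $\psi_1^{-1}\circ\gamma$ attains its supremum over $\gamma(\TT)\cap\Gamma_1^+$, and derives the contradiction locally: near $z_0$ the function $t\mapsto\psi_1^{-1}(\gamma(\ee^{\ie t}))$ is continuous and injective on an interval, hence strictly monotone, which is incompatible with an interior maximum. You instead decompose the open set $U=\gamma^{-1}(\Gamma_1)\subseteq\TT$ into its component arcs, use strict monotonicity of $f=\psi_1^{-1}\circ\gamma$ globally on each component, and force both endpoint limits of $f$ to vanish via condition \eqref{L0_item3}, contradicting the fact that a strictly monotone function on a nondegenerate interval has distinct limits at its two ends. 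Your argument is self-contained---it uses only the defining conditions of a Heisenberg partition and no external compactness result---at the price of a case analysis (the case $U=\TT$, and the borderline component $\TT\setminus\{b\}$) that the paper's localized extremum argument avoids entirely. One point of care: in the borderline case your set $A:=f(J\cap V)$ must be read with $V$ a one-sided neighbourhood of the relevant endpoint of the parametrizing interval, so that $A$ is the image of a one-sided tail of the arc; if $V$ were instead a neighbourhood of the boundary point $b$ in $\TT$, then $J\cap V$ would contain tails of both ends and the accumulation argument would only show that at least one of the two one-sided limits vanishes, which does not yet contradict monotonicity. Your closing description of the borderline configuration (``each equal to $0$'') indicates you intend the one-sided reading, so this is a matter of presentation rather than a gap.
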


\begin{proof}
We keep the notation in Lemma~\ref{L0}.
Let 
$\Gamma_1^{\pm}:=\psi_1(\RR_{\pm})$ hence 
$\Gamma_1=\Gamma_1^+\sqcup\Gamma_1^-$ is the decomposition of $\Gamma_1$ into connected components. 
Since $\gamma$ is continuous and $\TT$ is compact, it follows that $\gamma(\TT)\subseteq X$ is compact. 
Then, by \cite[Prop. 5.3]{BB17}, we obtain the following properties: 
\begin{enumerate}[1.]
	\item\label{L4_proof_item1} 
	The subset $\gamma(\TT)\cap\Gamma_2\subseteq\Gamma_2$ is compact. 
	\item\label{L4_proof_item2} 
	The subset $\gamma(\TT)\cap\Gamma_1\subseteq\Gamma_1$ is closed and bounded. 
	\item\label{L4_proof_item3} 
	If $0\in\RR$ is an accumulation point of $\psi_1^{-1}(\gamma(\TT)\cap\Gamma_1)$, 
	then $\gamma(\TT)\cap\Gamma_2\ne\emptyset$. 
\end{enumerate}
We now argue by contradiction. 
Let us assume $\gamma(\TT)\cap\Gamma_1\ne\emptyset$. 
Then we may assume without loss of generality $\gamma(\TT)\cap\Gamma_1^+\ne\emptyset$. 
By property \ref{L4_proof_item2} above, there exists $z_0\in\TT$ with $$\psi_1^{-1}(\gamma(z_0))=\sup\psi_1^{-1}(\gamma(\TT)\cap\Gamma_1^+).$$
Again without loss of generality we may assume $z_0=1$. 
Since $\Gamma_1^+$ is an open subset of the open subset $\Gamma_1$ of $X$, 
it follows that the subset $\Gamma_1^+\subseteq X$ is open. 
Since $\gamma\colon\TT\to X$ is continuous and $\gamma(1)\in\Gamma_1^+$, 
there exists $\varepsilon\in(0,\pi)$ with $\gamma_0(I_\varepsilon)\subseteq \Gamma_1^+$, 
where $I_\varepsilon:=(-\varepsilon,\varepsilon)\subseteq\RR$ and $\gamma_0\colon I_\varepsilon\to X$, $\gamma_0(t):=\gamma(\ee^{\ie t})$. 
The property $\gamma_0(I_\varepsilon)\subseteq \Gamma_1^+$ allows us to consider the function $\psi_1^{-1}\circ\gamma_0\colon I_\varepsilon\to(0,\infty)$, which is continuous and injective, hence either increasing or decreasing. 
On the other hand, by the choice of $z_0\in\TT$ above and the assumption $z_0=1\in\TT$ we have $(\psi_1^{-1}\circ\gamma_0)(0)=\sup(\psi_1^{-1}\circ\gamma_0)(I_\varepsilon)$, 
which is impossible if $\psi_1^{-1}\circ\gamma_0\colon I_\varepsilon\to(0,\infty)$  is  either increasing or decreasing. 
This contradiction shows that we cannot have $\gamma(\TT)\cap\Gamma_1\ne\emptyset$, 
therefore $\gamma(\TT)\subseteq \Gamma_2$. 
\end{proof}

\section{Results on induced representations}
\label{subsect:ind}

In the present section we establish, for later use, 
several properties for which we did not find suitable references in the literature.

We first recall a few notions and notation to be used throughout the paper. 
Our general reference for $C^*$-algebras and basic representation theory is \cite{Di60}. 
For every locally compact group $\Gamma$ we denote by $C^*(\Gamma)$ its corresponding group $C^*$-algebra. 
A unitary representation of $\Gamma$ is a homomorphism of topological groups $\pi\colon \Gamma\to\Uc(\Hc)$,   
where $\Uc(\Hc)$ is the group of unitary operators on the complex Hilbert space $\Hc$, regarded as a topological group with its strong operator topology. 
Then $\pi$ gives rise to a nondegenerate $*$-representation $\widetilde{\pi}\colon C^*(\Gamma)\to\Bc(\Hc)$ satisfying $\widetilde{\pi}(f):=\int_\Gamma f(\gamma)\pi(\gamma)$ for every $f\in L^1(\Gamma)$. 
Since $\pi$ is a group morphism, its kernel $\Ker\pi:=\{\gamma\in\Gamma\mid\pi(\gamma)=\1\}$ is a closed normal subgroup of $\Gamma$. 
Similarly, since $\widetilde{\pi}$ is a morphism of Banach $*$-algebras, its kernel $\ker\widetilde{\pi}:=\{a\in C^*(\Gamma)\mid \widetilde{\pi}(a)=0\}$ is a closed two-sided ideal of $C^*(\Gamma)$. 
For the sake of simplicity, we usually write $\pi$ instead of $\widetilde{\pi}$, 
but we keep the differing notation $\Ker\pi$ for the kernel of the group morphism $\pi$ and $\ker\pi$ for the kernel of the $*$-algebra morphism $\pi$ ($=\widetilde{\pi}$).

If $\Ac$ is a $*$-algebra, then we denote by $\widehat{\Ac}$ its set of unitary equivalence classes~$[\pi]$ of irreducible $*$-representations $\pi\colon\Ac\to\Bc(\Hc_\pi)$ and by $\Prim(\Ac):=\{\ker\pi\mid [\pi]\in\widehat{\Ac}\}$ the set of all primitive ideals of $\Ac$. 
If $\Sc$ and $\Tc$ are two sets of $*$-representations of $\Ac$,  $\Tc$ is said to be weakly contained in $\Sc$ if 
$\bigcap\limits_{S\in\Sc}\ker S\subseteq \bigcap\limits_{T\in\Tc}\ker T$, 
and we write $\Tc\preceq\Sc$. 
If both $\Tc\preceq\Sc$ and $\Sc\preceq\Tc$, then $\Sc$ and $\Tc$ are said to be weakly equivalent. 
The notion of weak containment actually depends only on unitary equivalence classes of $*$-representations of $\Ac$, so we may apply it to subsets of $\widehat{\Ac}$. 
For a subset $\Sc\subseteq\widehat{\Ac}$, the set of elements of $\widehat{A}$ that are weakly contained in $\Sc$ is exactly the closure of $\Sc$ in $\widehat{\Ac}$. 
Here $\widehat{\Ac}$ is endowed with the pull-back of the hull-kernel topology via the surjective map $\ker\colon\widehat{\Ac}\to\Prim(\Ac)$. 
For every $*$-representation $T$ of $\Ac$ there exists a unique closed subset of $\widehat{\Ac}$ that is weakly equivalent to $T$, 
namely 
$$\Sp(T):=\{[\rho]\in\widehat{\Ac}\mid \rho\preceq T\},$$
hence $\ker T=\bigcap\limits_{[\pi]\in\Sp(T)}\ker\pi$. 

For every topological space $X$ we denote by $\Cl(X)$ the space of all closed subsets of $X$ endowed with its upper topology, cf. \cite[Def. 2.1]{BB-LMS}, called the inner  topology in \cite[\S 2]{Fe62}.  
If we denote by $\Tc^\sim(\Ac)$ the set of all unitary equivalence classes of nondegenerate $*$-representations of $\Ac$ on separable complex Hilbert spaces, 
then we have the natural map 
\begin{equation}
\label{general_eq1}
\Sp\colon\Tc^\sim(\Ac)\to\Cl(\Prim(\Ac)),
\end{equation} 
and the corresponding pull-back of the inner topology of $\Cl(\Prim(\Ac))$ is called the inner hull-kernel topology of~$\Tc^\sim(\Ac)$. 

When $\Ac=C^*(\Gamma)$ for a locally compact group $\Gamma$, the above terminology on $*$-representations of $\Ac$ carries over to unitary representations of $\Gamma$. 
See \cite{Fe60}, \cite{Fe62}, and \cite{Di64}.

In the following sections we need certain continuity properties of basic operations on representations of locally compact groups, particularly restriction or induction of representations, for which we refer to \cite{Ma58}, \cite{Fe60} and \cite{Fe62}. 
	Let $G$  be a locally compact group with a closed subgroup $K\subseteq G$, 
	with their corresponding modular functions $\Delta_G\colon G\to\RR^\times_+$ and $\Delta_K\colon K\to\RR^\times_+$. 
	There always exist a continuous function $\rho_K\colon G\to\RR^\times_+$ 
	satisfying $\rho_K(gk)=(\Delta_K(k)/\Delta_G(k))\rho_K(g)$ for all $g\in G$ and $k\in K$, 
	and a positive Radon measure $\nu_K$ on $K$ such that
	for every function $f\in\Cc_c(G/K)$ 
	$$\int_{G/K}f(g^{-1}xK)\de\nu_K(xK)=\int_{G/K}\lambda_g(xK)f(xK)\de\nu_K(xK) 
	\text{ for }g\in G,$$
	where $\lambda_g(xK):=\rho_K(g^{-1}x)/\rho_K(x)$ for $g,x\in G$. 
	
	For any representation $\tau\colon K\to\Uc(\Hc_\tau)$, we denote by $\Ind_K^G(\tau)\colon G\to\Uc(\Hc)$ the corresponding induced representation (in the sense of Mackey \cite{Ma58}). 
	We recall that,  if we denote by $\Hc^0$ the space of continuous functions  $\varphi\in\Cc(G,\Hc_\tau)$ such that
	\begin{eqnarray*}
		& \varphi(gk)  =\tau^{-1}(k)\varphi(g) \quad (\forall k\in K,g\in G) \\
		& g K \mapsto \Vert \varphi(g)\Vert \text{ has compact support in }  G/K, 
	\end{eqnarray*} 
	then $\Hc^0$ is dense in the representation Hilbert space $\Hc$ of the induced representation $\Ind_K^G(\tau)=:\pi$, 
	and $(\pi(g)\varphi)(x)=\lambda_g(xK)^{1/2}\varphi(g^{-1}x)$ for all $g,x\in G$ and $\varphi\in\Hc^0$. 
	
	\begin{remark}
	\label{unimod}
	\normalfont 
	There exists a (nonzero) $G$-invariant Radon measure $\nu_K$ on the homogeneous space $G/K$ if and only if the function $\rho_K$ is constant, 
	and then we take $\rho_K(g)=1$ for all $g\in G$. 
	This is the case   
	if for instance both $G$ and $K$ are unimodular \cite[Ch. VII, \S2, no. 6, Cor. 2]{Bo07} or if $K$ is a normal subgroup  \cite[Ch. VII, \S2, no. 7, Prop. 10]{Bo07}.
	\end{remark}

Lemma~\ref{L9} below is used in the proof of Lemma~\ref{L10}. 

\begin{lemma}
	\label{L9}
	Let $G$ be a locally compact group with a closed subgroup $K$ satisfying $[G,G]\subseteq K\subseteq G$. 
	If we denote by $\tau_0\colon K\to\TT$ the trivial representation of $K$ and  define $\pi:=\Ind_K^G(\tau_0)\colon G\to\Bc(L^2(G/K))$, 
	then $\pi$ is a factor representation if and only if $G=K$. 
\end{lemma}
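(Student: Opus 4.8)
The plan is to recognize $\pi$ as the regular representation attached to the abelian quotient $G/K$, and then to reduce the factoriality question to asking whether a commutative von Neumann algebra is trivial.

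First I would observe that the hypothesis $[G,G]\subseteq K$ forces $K$ to be normal in $G$: for $g\in G$ and $k\in K$ one has $gkg^{-1}=(gkg^{-1}k^{-1})k\in[G,G]\cdot K\subseteq K$. Hence $A:=G/K$ is a locally compact \emph{abelian} group, and I write $q\colon G\to A$ for the quotient homomorphism. Since $A$ is abelian it is unimodular, so $G/K$ carries a $G$-invariant measure and the inducing cocycle is trivial; combined with the fact that $\tau_0$ is the trivial representation, the formula $(\pi(g)\varphi)(x)=\varphi(g^{-1}x)$ recalled above shows that $\pi$ factors through $q$ and is the pullback along $q$ of the left regular representation $\lambda_A\colon A\to\Uc(L^2(A))$; that is, $\pi=\lambda_A\circ q$.

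Next, since $q$ is surjective we have $\pi(G)=\lambda_A(A)$ as families of operators, so the generated von Neumann algebra is $\pi(G)''=\lambda_A(A)''=:\Lc(A)$, the group von Neumann algebra of $A$. As $A$ is abelian, $\lambda_A(A)$ is a commuting family of unitaries, so $\Lc(A)$ is commutative. For a commutative von Neumann algebra $M$ the center equals $M$ itself, whence $M$ is a factor if and only if $M=\CC\,\1$. Thus $\pi$ is a factor representation if and only if $\Lc(A)=\CC\,\1$.

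It remains to show $\Lc(A)=\CC\,\1$ if and only if $A$ is trivial, which translates into $G=K$. If $A=\{e\}$, then $L^2(A)=\CC$ and $\pi$ is the one-dimensional trivial representation, so $\Lc(A)=\CC\,\1$ and $\pi$ is trivially a factor representation. Conversely, if $A\neq\{e\}$, the regular representation is faithful, and for $a\neq e$ the unitary $\lambda_A(a)$ is not a scalar multiple of the identity (translation by a nontrivial element cannot act as a scalar, as one sees by testing on indicator functions of small sets of finite measure); hence $\Lc(A)\supsetneq\CC\,\1$ and is not a factor. Equivalently, one may invoke the Fourier/Plancherel isomorphism $\Lc(A)\cong L^\infty(\widehat{A})$, which is one-dimensional exactly when the Pontryagin dual $\widehat{A}$ is a single point, i.e.\ when $A$ is trivial. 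The main obstacle is precisely this final equivalence $\Lc(A)=\CC\,\1\iff A=\{e\}$; everything preceding it is a direct unwinding of definitions, and I expect to settle it cleanly either by the non-scalarity of nontrivial translations or by the identification of $\Lc(A)$ with $L^\infty(\widehat{A})$.
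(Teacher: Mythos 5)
Your proposal is correct and takes essentially the same approach as the paper: both reduce the problem to the observation that $\pi(G)''$ is commutative (you via the identification $\pi=\lambda_{G/K}\circ q$ with the regular representation of the abelian quotient, the paper via a direct commutator computation showing $\pi(g_1)\pi(g_2)=\pi(g_2)\pi(g_1)$), then use that a commutative von Neumann algebra is a factor only if it equals $\CC\1$, and finally derive a contradiction from all $\pi(g)$ acting as scalars when $G/K$ has at least two points. Your disjoint-support indicator-function argument plays exactly the role of the paper's appeal to Urysohn's lemma, so the two proofs differ only in packaging.
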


\begin{proof}
	We first note that $K$ is a normal subgroup of $G$
	hence, by 
	Remark~\ref{unimod}, 
	there is a $G$-invariant measure on $G/K$, namely the Haar measure of the locally compact abelian group $G/K$. 
	
	On the other hand, we have $(\pi(g)\varphi)(xK)=\varphi(g^{-1}xK)$ for all $g,x\in G$ and $\varphi\in L^2(G/K)$. 
	Also, for arbitrary $g_1,g_2,x\in G$, denoting $k:=g_1^{-1}g_2^{-1}g_1g_2\in K$, we have 
	$$g_1g_2xK=g_2g_1 kxK=g_2g_1x(x^{-1}kxk^{-1})K=g_2g_1xK. $$
	This implies $\pi(g_1)\pi(g_2)=\pi(g_2)\pi(g_1)$ for arbitrary $g_1,g_2\in G$, 
	hence the von Neumann algebra $\pi(G)''\subseteq\Bc(L^2(G/K))$ generated by $\pi(G)$ is commutative. 
	
	Now, if we assume that $\pi(G)''$ is a factor, then necessarily $\pi(G)''=\CC\1$, 
	hence there exists $\chi\in\Hom(G,\TT)$ with $\pi(g)=\chi(g)\1$ for all $g\in G$. 
	Consequently, for all $g\in G$ and $\varphi\in L^2(G/K)$ we have $\varphi(ga)=\chi(g)\varphi(a)$ for almost every point $a\in G/K$. 
	
	In particular, 
	for every $\varphi\in L^2(G/K)$ there exists $c_\varphi\in\RR$ with 
	$\vert \varphi (a)\vert=c_\varphi$ for almost every $a\in G/K$.  
	If there exist  $a_0,a_1\in G/K$  with $a_0\ne a_1$, then, 
	for $j=0,1$,  
	we choose a compact neighbourhood  $V_j\subseteq G/K$ of $a_j$ with $V_0\cap V_1=\emptyset$. 
	Then, since $G/K$ is a locally compact Hausdorff space, it follows by an application of Urysohn's Lemma as e.g. in \cite[2.12]{Ru87} that there exists a continuous function with compact support $\varphi\colon G/K\to \RR$ with $\varphi\vert_{V_1}\equiv1$ 
	and $\varphi\vert_{V_0}\equiv0$ for $j=0,1$. 
	Since the measure of $G/K$ is  $G$-invariant, its support is equal to $G/K$, 
	and then the measure of each of the sets $V_0$ and $V_1$ is positive, since both these sets have nonempty interior. 
	Thus, there is no $c_\varphi\in\RR$ with 
	$\vert \varphi (a)\vert=c_\varphi$ for almost every $a\in G/K$. 
	This contradiction shows that 
	the space $G/K$  does not contain two different points. 
	Thus $G=K$, and this completes the proof. 
\end{proof}

Lemmas \ref{nonfact} and \ref{modelind} are needed in the proof of Lemma~\ref{co-L5}. 

\begin{lemma}
	\label{nonfact}
	Let $K\times X\to X$, $(k,x)\mapsto kx$, be a continuous action of a compact group on  a locally compact space. 
	For a $K$-invariant positive Radon measure $\mu\ne 0$ on $X$, consider the corresponding Hilbert space $\Hc:=L^2(X,\mu)$ and unitary representation 
	$\pi\colon K\to U(\Hc)$, $(\pi(k)f)(x):=f(k^{-1}x)$ for $x\in X$, $k\in K$, and $f\in \Hc$. 
	If $\pi$ is a factor representation, then for every $k\in K$ and $x\in\supp\mu$ we have $kx=x$. 
\end{lemma}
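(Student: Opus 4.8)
The plan is to exploit the compactness of $K$ to manufacture a canonical central projection and then feed it into the factor hypothesis. Write $dk$ for the normalized Haar measure of $K$ and set $P:=\int_K\pi(k)\,dk\in\Bc(\Hc)$, the orthogonal projection onto the closed subspace $\Hc^K$ of $K$-fixed vectors. Using bi-invariance of $dk$ one checks $\pi(k_0)P=P=P\pi(k_0)$ for every $k_0\in K$, so $P\in\pi(K)'$; and for any $T\in\pi(K)'$ one has $TP=\int_K T\pi(k)\,dk=\int_K\pi(k)T\,dk=PT$, so $P\in\pi(K)''$ as well. Hence $P$ lies in the center $\pi(K)'\cap\pi(K)''$ of $\pi(K)''$. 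Since $\pi$ is a factor representation this center equals $\CC\1$, and therefore the projection $P$ is either $0$ or $\1$.

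The next step is to rule out $P=0$, that is, to show $\Hc^K\ne\{0\}$. Because $\mu\ne0$ is a positive Radon measure, I would choose $f\in\Cc_c(X)$ with $f\ge0$ and $\int_X f\,\de\mu>0$, and average it by $\bar f(x):=\int_K f(k^{-1}x)\,dk$. Continuity of the action together with compactness of $K$ shows that $\bar f$ is continuous and supported in the compact set $K\cdot\supp f$, hence $\bar f\in\Cc_c(X)\subseteq\Hc$; it is $K$-invariant by construction, and the $K$-invariance of $\mu$ gives $\int_X\bar f\,\de\mu=\int_X f\,\de\mu>0$, so $\bar f\ne0$ in $\Hc$. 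Thus $P\ne0$, forcing $P=\1$.

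It remains to convert $P=\1$ into the desired pointwise conclusion. The equality $P=\1$ means every vector of $\Hc$ is $K$-fixed, i.e.\ $\pi(k)=\1$ for all $k\in K$. For fixed $k\in K$ and $f\in\Cc_c(X)$ the identity $\pi(k)f=f$ in $L^2$ reads $\int_X\vert f(k^{-1}x)-f(x)\vert^2\,\de\mu(x)=0$; since the integrand is continuous and nonnegative it must vanish on $\supp\mu$, so $f(k^{-1}x)=f(x)$ for all $x\in\supp\mu$. Suppose now, for a contradiction, that $k_0x_0\ne x_0$ for some $k_0\in K$ and $x_0\in\supp\mu$. Applying the previous identity with $k=k_0^{-1}$ at the point $x_0$ gives $f(k_0x_0)=f(x_0)$ for every $f\in\Cc_c(X)$; but $k_0x_0$ and $x_0$ are distinct points of a locally compact Hausdorff space, so Urysohn's lemma produces an $f\in\Cc_c(X)$ separating them, a contradiction. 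Hence $kx=x$ for all $k\in K$ and $x\in\supp\mu$.

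I expect the only genuinely delicate point to be the exclusion of $P=0$; the averaging argument settles it, the crucial input being that $K$-invariance of $\mu$ preserves the total mass $\int_X f\,\de\mu$ under averaging, so $\bar f$ is not killed in $L^2$. The remaining ingredients are the routine identification of $P$ as a central projection (where compactness of $K$ is essential, since otherwise no such averaging projection need exist) and a standard Urysohn separation in the locally compact Hausdorff setting.
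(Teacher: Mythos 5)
Your proof is correct, and it takes a genuinely different route from the paper's. The paper exhibits an explicit one-dimensional invariant subspace: it picks a compact set $C_0$ with $\mu(C_0)>0$, forms the compact $K$-invariant set $C:=KC_0$, observes that the constant functions on $C$ give a one-dimensional subrepresentation of $\pi$, and then invokes a general fact (from Dixmier, valid for factor representations of type~\I\ groups containing a one-dimensional subrepresentation) to conclude $\pi(k)=\chi(k)\,\id_{\Hc}$ for some character $\chi$, before finishing with Urysohn's lemma exactly as you do. You instead form the Haar-averaging projection $P=\int_K\pi(k)\,dk$ onto the $K$-fixed vectors, note that it lies in $\pi(K)'\cap\pi(K)''$ and hence is a central projection of the factor $\pi(K)''$, rule out $P=0$ by averaging a nonnegative compactly supported function of positive integral (where $K$-invariance of $\mu$ preserves the mass), and conclude $P=\1$, i.e.\ $\pi$ is trivial. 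Your argument is more self-contained: it needs only the bicommutant theorem and Haar averaging, avoiding the cited type~\I\ machinery, and it yields the slightly stronger conclusion that $\pi$ is the trivial representation rather than a scalar character times the identity; both approaches use compactness of $K$ essentially (yours for the averaging projection, the paper's for $KC_0$ being compact and for $K$ being type~\I) and both close with the same Urysohn separation on the locally compact Hausdorff space $X$.
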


\begin{proof}
We start with a general remark. 
If $K$ is a locally compact group of type I and $\pi \colon K\to U(\Hc_\pi)$ is a factor representation such that $\pi$ contains a 1-dimensional sub-representation,  
then there is $\chi\in \Hom(K,\TT)$ such that $\pi(k)=\chi(k) \1$. 
(See \cite[5.3.3--5 and 5.4.11]{Di64}.)

Assume now that in addition $K$ is a compact group and $\pi$ is as in the statement. 
	Since $\mu$ is a positive Radon measure and $\mu\ne0$, 
	we can select a compact subset $C_0\subseteq X$ with $\mu(C_0)>0$. 
	Since the group $K$ is compact, it follows that the set $C:=KC_0$ is compact as well. 
	Moreover, the set $C$ is $K$-invariant and $C_0\subseteq C$, hence $\mu(C)>0$, 
Denote by $\Hc_{0}$ the 1-dimensional vector space consisting of constant functions 
on~$C$. 
	Since the set $C$ is compact and $\mu$ is a Radon measure, 
	$\Hc_{0}$  can be seen as closed subspace of $\Hc$ in a natural way. Moreover 
	we have $\pi(K)\Hc_{0}=\Hc_{0}$.  
	Hence the representations $\pi_0\colon K \to U(\Hc_{0})\simeq \TT$, 
	$\pi_0(k)= \pi(k)\vert_{\Hc_0}$ is an 1-dimensional sub-representation of $\pi$. 
	
	By the discussion above there exists $\chi\in\Hom(K,\TT)$ with 
	$\pi(k)=\chi(k)\1$ for every $k\in K$. 
	Taking into account the definition of the representation $\pi$, 
	we obtain in particular $\vert f(k^{-1}x)\vert=\vert f(x)\vert$ for arbitrary $k\in K$, $f\in \Hc$ and $\mu$-a.e. $x\in X$. 
	Using Urysohn's lemma as in the proof of Lemma~\ref{L9}, one can then show that 
	for every  $x\in \supp\mu$ we have $Kx=\{x\}$. 
\end{proof}

\begin{lemma}
	\label{modelind}
	Let $\Vc$ be a finite-dimensional real vector space, $K$ a compact group with a continuous homomorphism $\alpha\colon K\to\GL(\Vc)$, and consider $G:=\Vc\rtimes_\alpha K$. 
	For $\xi \in \Vc^*$ set 
	$K_\xi:=\{k\in K\mid\xi\circ\alpha(k)=\xi\}$. 
	We then define $G_\xi:=\Vc\rtimes_\alpha K_\xi$.
	For  $\chi:=\ee^{\ie\langle\xi,\cdot\rangle}\in\Hom(\Vc,\TT)$, let $\widetilde{\chi}\colon G_\xi\to\TT$ be given by $\widetilde{\chi}(v,k):=\chi(v)$. 
	
	Then $\widetilde{\chi}\in\Hom(G_\xi,\TT)$ and the following assertions hold: 
	\begin{enumerate}[{\rm(i)}]
		\item
		\label{modelind_item1} 
		The representation 
		\begin{equation*}
			\pi\colon G\to U(L^2(K/K_\xi)), \quad (\pi(v,k)\psi)(\ell K_\xi):=\chi(\alpha(\ell^{-1}v))\psi(k^{-1}\ell K_\xi)
		\end{equation*}
		is unitary equivalent to the representation $\Ind_{G_\xi}^G(\widetilde{\chi})$. 
		\item\label{modelind_item2} 
		If we define $\tau\colon K/K_\xi\to\Vc^*$, $\tau(\ell K_\xi):=\xi\circ\alpha(\ell^{-1})$ and $\mu_\xi$ is the probability Radon measure on $\Vc^*$ obtained as the pushforward through $\tau$ of the $K$-invariant probability Radon measure of $K/K_\xi$, then 
		\begin{equation}
					\label{modelind_proof_eq1}
					\supp\mu_\xi=K.\xi:=\{\xi\circ\alpha(\ell)\mid \ell\in K\}\subseteq\Vc^*.	
			\end{equation}
		and 
		the representation 
		$$\widetilde{\pi}\colon G\to U(L^2(\Vc^*,\mu_\xi)),\quad 
		(\widetilde{\pi}(v,k)f)(\eta):=\ee^{\ie\langle\eta,v\rangle}f(\eta\circ\alpha(k))$$
		is also unitary equivalent to the representation $\Ind_{G_\xi}^G(\widetilde{\chi})$. 
		\item\label{modelind_item3} 
		The representations $\widetilde{\pi}$, $\pi$, and $\Ind_{G_\xi}^G(\widetilde{\chi})$ 
		are irreducible. 
		\item\label{modelind_item4} 
		For arbitrary $t\in\RR$ we define $\xi_t:=\ee^{-t}\xi\in\Vc^*$, 
		$\chi_t\colon G_\xi\to\TT$, $\chi_t(k,v):=\ee^{\ie\langle\xi_t,v\rangle}$.
		Then $\chi_t\in\Hom(G_\xi,\TT)$ and $[\Ind_{G_\xi}^G(\chi_t)]\in\widehat{G}$. 
		Moreover, for every $[\sigma]\in\widehat{G}$ with the property that 
		$[\Ind_{G_\xi}^G(\chi_t)]$ converges to $[\sigma]$  in $\widehat{G}$ for $t\to\infty$, 
		we have $\Vc\subseteq\Ker\sigma$. 
	\end{enumerate}
\end{lemma}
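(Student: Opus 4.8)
The plan is to prove each assertion of Lemma~\ref{modelind} in order, since the later parts lean on the earlier ones.

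For \eqref{modelind_item1}, I would first verify $\widetilde{\chi}\in\Hom(G_\xi,\TT)$ by a direct check using the semidirect product multiplication and the $K_\xi$-invariance of $\xi$, which makes $\chi\circ\alpha(k^{-1})=\chi$ on $\Vc$ for $k\in K_\xi$. Then I would realize $\Ind_{G_\xi}^G(\widetilde{\chi})$ concretely using the space $\Hc^0$ of $\Hc_\tau$-valued functions recalled before the lemma, and construct an explicit intertwining unitary with $L^2(K/K_\xi)$: since $G=\Vc\rtimes_\alpha K$, the coset space $G/G_\xi$ is homeomorphic to $K/K_\xi$, and restricting an induced vector to $K$ gives the identification. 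The formula for $\pi(v,k)$ should drop out of the formula $(\Ind\,\varphi)(g)=\varphi(g^{-1}\cdot)$ after tracking how the $\Vc$-part acts through $\chi$ and how the $K$-part permutes cosets; the factor $\chi(\alpha(\ell^{-1}v))$ records the cocycle coming from conjugating $v$ into $G_\xi$.

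For \eqref{modelind_item2}, I would push the representation $\pi$ on $L^2(K/K_\xi)$ forward along $\tau$. The key point is that $\tau\colon K/K_\xi\to\Vc^*$ is injective onto the orbit $K.\xi$ (since $\tau(\ell K_\xi)=\tau(\ell'K_\xi)$ forces $\ell^{-1}\ell'\in K_\xi$), hence a homeomorphism onto its compact image, so $\mu_\xi$ is well-defined and \eqref{modelind_proof_eq1} follows because the pushforward of a measure of full support along a homeomorphism has support equal to the image. Transporting $\pi$ through this homeomorphism then yields exactly $\widetilde{\pi}$ after substituting $\eta=\xi\circ\alpha(\ell^{-1})$ into the formula for $\pi$ and computing how the phase $\chi(\alpha(\ell^{-1}v))=\ee^{\ie\langle\eta,v\rangle}$ transforms.

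For \eqref{modelind_item3}, irreducibility is most cleanly obtained from the Mackey machine: the orbit $K.\xi$ carries the ergodic (indeed transitive, hence its invariant measure is ergodic) $K$-action, the stabilizer is $K_\xi$, and the inducing character $\widetilde{\chi}$ restricts on $K_\xi$ to the trivial representation twisted only by $\chi$, so the data $(\xi,K_\xi,\text{trivial})$ satisfy Mackey's irreducibility criterion; alternatively one argues directly that any operator commuting with $\widetilde{\pi}(G)$ is multiplication by an essentially $K$-invariant, and by transitivity constant, function. Finally, for \eqref{modelind_item4}, each $\xi_t=\ee^{-t}\xi$ has the same stabilizer $K_{\xi_t}=K_\xi$, so parts \eqref{modelind_item1}--\eqref{modelind_item3} apply verbatim to give $[\Ind_{G_\xi}^G(\chi_t)]\in\widehat{G}$ realized on $L^2(\Vc^*,\mu_{\xi_t})$ with $\supp\mu_{\xi_t}=K.\xi_t=\ee^{-t}(K.\xi)$. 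The hard part will be the convergence statement: as $t\to\infty$ the orbits $\ee^{-t}(K.\xi)$ collapse to $0\in\Vc^*$, so any limit point $[\sigma]$ should be weakly contained in representations supported near $0$, on which the $\Vc$-action is trivial in the limit. I would make this rigorous via weak containment and the inner hull-kernel topology: show that $\Vc\subseteq\Ker\sigma$ by proving $\sigma$ is weakly contained in the set of limits of $\widetilde{\pi}_t$, whose restrictions to $\Vc$ are multiplication by $\ee^{\ie\langle\eta,\cdot\rangle}$ with $\eta\in\ee^{-t}(K.\xi)\to 0$ uniformly, forcing $\sigma|_\Vc$ to be a multiple of the trivial character and hence $\Vc\subseteq\Ker\sigma$. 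This last step, controlling the kernel of the limit representation from the uniform collapse of the supports, is where the real work lies and where I would invoke the continuity of $\Sp$ and the description of convergence in $\widehat{G}$ via weak containment.
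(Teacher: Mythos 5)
Your proposal is correct and follows essentially the same route as the paper's proof: an explicit intertwining unitary realizing the induced representation on $L^2(K/K_\xi)$, transport along the homeomorphism $\tau$ onto the orbit $K.\xi$ for \eqref{modelind_item2}, irreducibility via Mackey's theorem (or the direct commutant/multiplication-operator argument, which the paper also gives as an alternative), and for \eqref{modelind_item4} the collapse of the supports $\supp\mu_{\xi_t}=\ee^{-t}\,\supp\mu_\xi$ to $\{0\}$ combined with Fell's continuity of restriction and weak containment to force $\sigma\vert_\Vc$ to be trivial. The only refinement the paper makes explicit, which you should incorporate when writing up the last step, is that the tool needed there is the continuity of \emph{restriction} of representations (\cite[Lemma 1.11 and Thm. 2.3]{Fe62}) applied to the direct integral decomposition $\widetilde{\pi}_t\vert_\Vc=\int^{\oplus}\ee^{\ie\langle\eta,\cdot\rangle}\de\mu_{\xi_t}$, rather than a continuity property of $\Sp$ itself.
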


\begin{proof}
	The property $\widetilde{\chi}\in\Hom(G_\xi,\TT)$ is straightforward. 
	
	\eqref{modelind_item1} 
	Since the group $K$ is compact, both groups $G$ and $G_\xi$ are unimodular, hence Remark~\ref{unimod} applies. 
	Let us define $\Hc^0$ as the space of $\varphi\in\Cc(G)$ such that
	\begin{eqnarray*}
	 & \varphi(gh)  =\widetilde{\chi}^{-1}(h)\varphi(g) \quad (\forall h\in G_\xi,g\in G) \\
	& g G_\xi \mapsto \vert \varphi(g)\vert \text{ has compact support in }  G/G_\xi.
	\end{eqnarray*} 
Then $\Hc^0$ is dense in the representation Hilbert space $\Hc$ of the induced representation $\pi_0:=\Ind_{G_\xi}^G(\widetilde{\chi})$, 
	and $(\pi_0(g)\varphi)(x)=\varphi(g^{-1}x)$ for all $g,x\in G$ and $\varphi\in\Hc^0$. 
	
	For every $\varphi\in\Hc^0$, $v\in\Vc$, $k\in K$, and $k_0\in K_\xi$ we have $\varphi(0,kk_0)=\varphi(0,k)$ and 
	$$\varphi(v,k)=\varphi((0,k)\cdot(\alpha(k^{-1})v,\1))
	=\chi(\alpha(k^{-1})v)^{-1}\varphi(0,k).$$
	Then the  operator 
	$$U\colon \Hc\to L^2(K/K_\xi),\quad (U\varphi)(kK_\xi):=\varphi(0,k)$$
	is unitary,
	with its inverse 
	$$U^{-1}\colon L^2(K/K_\xi)\to\Hc,\quad 
	(U^{-1}\psi)(v,k)=\chi(\alpha(k^{-1})v)^{-1}\psi(kK_\xi).
	$$
	We now check that $U$ intertwines 
	the representations $\pi$ and~$\pi_0$. 
	Indeed, for arbitrary $v\in\Vc$, $k,\ell\in K$, and $\psi\in\Cc(K/K_\xi)$, 
	we have 
	\begin{align*}
		(U\pi_0(v,k)U^{-1}\psi)(\ell K_\xi)
		&=(\pi_0(v,k)U^{-1}\psi)(0,\ell) \\
		&=(U^{-1}\psi)((v,k)^{-1}\cdot (0,\ell))\\ 
		&=\chi(-\alpha(\ell^{-1})v)^{-1}\psi(k^{-1}\ell K_\xi) \\
		&=(\pi(v,k)\psi)(\ell K_\xi).
	\end{align*}
	Since $\Cc(K/K_\xi)$ is dense in $L^2(K/K_\xi)$, this completes the proof of Assertion~\eqref{modelind_item1}. 
	
	\eqref{modelind_item2} 
	By  Assertion~\eqref{modelind_item1}, it suffices to show that 
	the representations $\widetilde{\pi}$ and $\pi$ are unitary equivalent. 
	To this end we first note that for every $k\in K_\xi$ and $\ell\in K$ we have 
	$\xi\circ\alpha(\ell^{-1})=\xi\circ\alpha((\ell k)^{-1})$ 
	hence the mapping $\tau\colon K/K_\xi\to\Vc^*$ is well defined  
	and a homeomorphism onto its image. 
	
	A simple computation shows that 
	the support of the measure $\mu_\xi$ is the compact $K$-orbit given by \eqref{modelind_proof_eq1}.  
	Hence the operator 
	$$W\colon L^2(K/K_\xi)\to L^2(\Vc^*,\mu),\quad 
	(W\psi)(\xi\circ\alpha(\ell^{-1})):=\psi(\ell K_\xi)$$
	is well defined and unitary, 
	with its inverse 
	$$W^{-1}\colon L^2(\Vc^*,\mu_\xi)\to L^2(K/K_\xi),\quad W^{-1}f:=f\circ\tau.$$
	Then for arbitrary $v\in\Vc$, $k,\ell\in K$, and $f\in L^2(\Vc^*,\mu)$ we have 
	\begin{align*}
		(W\pi(v,k)W^{-1}f)(\xi\circ\alpha(\ell^{-1}))
		&=(\pi(v,k)W^{-1}f)(\ell K_\xi) \\
		&=\ee^{\ie\langle\xi,\alpha(\ell^{-1})v\rangle}(W^{-1}f)(k^{-1}\ell K_\xi) \\
		&=\ee^{\ie\langle\xi\circ\alpha(\ell^{-1}),v\rangle}f(\xi\circ\alpha(\ell^{-1}k)) \\
		&=(\widetilde{\pi}(v,k)f)(\xi\circ\alpha(\ell^{-1}))
	\end{align*}
	which completes the proof of Assertion~\eqref{modelind_item2}. 
	
	\eqref{modelind_item3} 
	The representations $\widetilde{\pi}$, $\pi$, and $\Ind_{G_\xi}^G(\widetilde{\chi})$ are unitary equivalent by Assertions~\eqref{modelind_item1}--\eqref{modelind_item2} 
	hence it suffices to prove that one of them is irreducible. 
	
	An application of Mackey's theorem 
	\cite[Thm. 8.1]{Ma58} shows that the representation $\Ind_{G_\xi}^G(\widetilde{\chi})$ is irreducible. 
	Alternatively, one can directly prove that $\widetilde{\pi}$ is irreducible, as follows.
	We first note the equality 
	\begin{equation}
		\label{modelind_proof_eq2}
		\overline{\spa\{\ee^{\ie\langle\cdot,v\rangle}\mid v\in\Vc\}}^{w^*}=L^\infty(\Vc^*,\mu_\xi).
	\end{equation}
	Indeed, in order to check that the set $\{\ee^{\ie\langle\cdot,v\rangle}\mid v\in\Vc\}$ 
	spans a $w^*$-dense linear subspace of $L^\infty(\Vc^*,\mu_\xi)$, 
	we recall the Banach space isomorphism 
	$L^\infty(\Vc^*,\mu_\xi)\simeq (L^1(\Vc^*,\mu_\xi))^*$. 
	Let $f\in L^1(\Vc^*,\mu_\xi)$ with
	$\int_{\Vc^*}\ee^{\ie\langle\cdot,v\rangle}f\de\mu_\xi=0$ for every $v\in\Vc$.  
	Then $f=0$ in $L^1(\Vc^*,\mu_\xi)$ by the injectivity of the Fourier transform. 
	In some more detail, the measure $f\de\mu_\xi$ has compact support, hence it defines a tempered distribution on $\Vc^*$. 
		The Fourier transform of that tempered distribution vanishes on~$\Vc$, 
		hence $f\de\mu_\xi$, which is equivalent to $f=0\in L^1(\Vc^*,\mu_\xi)$.
	
	Returning to proving that the representation $\widetilde{\pi}$ is irreducible, 
	we actually show that if $T\in\Bc(L^2(\Vc^*,\mu_\xi))$ and $T\widetilde{\pi}(v,k)=\widetilde{\pi}(v,k)T$ for every $v\in\Vc$ and $k\in K$, then $T$ is a scalar multiple of the identity operator. 
	In fact, since $T\widetilde{\pi}(v,\1)=\widetilde{\pi}(v,\1)T$ for every $v\in\Vc$, 
	it follows by  \eqref{modelind_proof_eq1} that $T$ commutes with the multiplication operator defined by an arbitrary element in $L^\infty(\Vc^*,\mu_\xi)$. 
	The set of these multiplication operators is a maximal abelian $*$-subalgebra of $\Bc(L^2(\Vc^*,\mu_\xi))$, hence it follows that $T$ is in turn a multiplication operator by a function $\varphi_0\in L^\infty(\Vc^*,\mu_\xi)$. 
	Then the condition $T\widetilde{\pi}(0,k)=\widetilde{\pi}(0,k)T$ for every $k\in K$ 
	implies that the function $\varphi_0$ is constant almost everywhere on $\Vc^*$ with respect to the measure $\mu_\xi$. 
	This further implies that the operator $T$ is a scalar multiple of the identity operator, 
	and we are done. 
	
	\eqref{modelind_item4} 
	We first note that  $K_{\xi}=K_{\xi_t}$
	$G_\xi= G_{\xi_t}$  for every $t\in\RR$.
	Then we define 
	$\tau_t\colon K/K_\xi\to\Vc^*$, $\tau(\ell K_\xi):=\xi_t\circ\alpha(\ell^{-1})$ and $\mu_{\xi_t}$ is the probability Radon measure on~$\Vc^*$ obtained as the pushforward through $\tau_t$ of the $K$-invariant probability Radon measure of $K/K_\xi$. 
	Then, by \eqref{modelind_item2}, 
	applied for $\xi_t\in\Vc^*$ (which satisfies $K_{\xi_t}=K_\xi$) 
	the unitary representation 
	\begin{equation}
		\label{modelind_proof_eq3}
		\widetilde{\pi}_t\colon G\to U(L^2(\Vc^*,\mu_{\xi_t})),\quad 
		(\widetilde{\pi}_t(v,k)f)(\eta):=\ee^{\ie\langle\eta,v\rangle}f(\eta\circ\alpha(k))
	\end{equation}
	is unitary equivalent to the representation $\Ind_{G_\xi}^G(\widetilde{\chi_t})$. 
	Therefore, if $[\Ind_{G_\xi}^G(\chi_t)]$ converges to $[\sigma]$  in $\widehat{G}$ for $t\to\infty$ then, by the continuity of the restriction of unitary representations 
	\cite[Lemma 1.11 and Thm. 2.3]{Fe62}, 
	it follows that $\widetilde{\pi}_t\vert_\Vc$ converges to $\sigma\vert_\Vc$  in the space of equivalence classes of representations of the abelian Lie group $\Vc$ 
	for $t\to\infty$. 
	For every $t\in\RR$ we have by \eqref{modelind_proof_eq1}
	\begin{equation*}
		\supp\mu_{\xi_t}=\ee^{-t}\supp\mu_\xi
	\end{equation*}
	and $\supp\mu_\xi\subseteq\Vc^*$ is a compact. 
	Thus, for $t\to\infty$, $\{0\}$ is the only limit point of 
	$\supp\mu_{\xi_t}$ in the space $\Cl(\Vc^*)$ endowed with its upper topology. 
	(See e.g., \cite{BB-LMS}.) 
	On the other hand, we obtain by \eqref{modelind_proof_eq3} the direct integral decomposition
	$$\widetilde{\pi}_t\vert_\Vc
	=\int^{\oplus}_{\Vc^*}\ee^{\ie\langle\eta,\cdot\rangle}\de\mu_{\xi_t}(\eta)$$
	hence $\widetilde{\pi}_t\vert_\Vc$ is weakly equivalent to the family $\{\ee^{\ie\langle\eta,\cdot\rangle}\mid \eta\in\supp\mu_{\xi_t}\}$ 
	by \cite[Thm. 1.4]{Fe60} or \cite[Thm. 3.1]{Fe62}. 
	It then follows by the definition of the inner hull-kernel topology \cite[\S 2]{Fe62} on the space of nondegenerate representations of $C^*(\Vc)\simeq\Cc_0(\Vc^*)$ 
	that, for $t\to 0$,  the trivial representation of $\Vc$ is the only limit point of $\widetilde{\pi}_t\vert_\Vc$ with respect to the inner hull-kernel topology. 
	Therefore $\sigma\vert_\Vc$ is the trivial representation of $\Vc$, that is, $\Vc\subseteq\Ker\sigma$. 
\end{proof}

\begin{remark}
	\normalfont 
	The method of proof of Lemma~\ref{modelind}\eqref{modelind_item3}  goes back to \cite{BB16}. 
\end{remark}

\section{On the topology of unitary duals of solvable Lie groups}
\label{sect:prelsolv}

This section collects results from representation theory of general solvable Lie groups that are needed in the proof of Theorem~\ref{dual_main} via Proposition~\ref{P13}.
Some of these results go back to \cite{Pu71} and \cite{Pu73}, 
but we use slightly different notation for which we refer to~\cite{BB25}. 

In the main part of the proof of Theorem~\ref{dual_main}, the focus is on Lie groups of type~\I. 
(See Section~\ref{sect:proof}.)
The reason for that is an application  for group $C^*$-algebras of 
the fact that a $C^*$-algebra $\Ac$ is liminary (in particular type \I) if and only if its dual space~$\widehat{\Ac}$ is a topological space of type $T_1$, that is, every finite subset is closed, cf. \cite[9.5.3]{Di64}. 
Nevertheless, 
 we also record here a very few facts on solvable Lie groups without any restriction on their type, since these results are not getting much simpler for 
 groups of type~\I, 
 and, moreover, they provide a proper perspective on the basic techniques that we need from representation theory of solvable Lie groups.
 
Unless otherwise specified, $G$ is a 1-connected solvable Lie group with its Lie algebra~$\gg$. 
 We denote by $D\subseteq G$ the closed connected subgroup corresponding 
 to the derived Lie algebra $\dg:=[\gg,\gg]$. 
 (Compare Remark~\ref{one-dim}.)
 
 We now summarize basic notation  related to the Puk\'anszky correspondence, 
 cf. \cite[\S 2]{BB25} and the references therein. 
 For arbitrary  $\xi\in\gg^*$ we define:
 \begin{itemize}
 	\item 	$G(\xi):=\{g\in G\mid g\xi=\xi\}$ is the stabilizer at $\xi$ of the coadjoint action and
 	 the connected component of $\1\in G(\xi)$ is denoted $G(\xi)_\1$.
 	The Lie algebra of both $G(\xi)$ and $G(\xi)_\1$ is 
 	$\gg(\xi):=\{x\in\gg\mid(\forall y\in\gg)\  \langle\xi,[x,y]\rangle=0\}$. 
 	\item  $\chi_\xi\in\Hom(G(\xi)_\1,\TT)$ with
 	$\chi_\xi(\exp_G x)=\ee^{\ie \langle\xi,x\rangle}$. 
 	\item 
 	$\overline{G}(\xi):=\{g\in G(\xi)\mid (\forall h\in G(\xi))\ ghg^{-1}h^{-1}\in\Ker\chi_\xi\}$ is the reduced stabilizer at $\xi$. 
 	\item $\mathop{G}\limits^{\mathsout{\wedge}}(\xi):=
 	\{\chi\in\Hom(\overline{G}(\xi),\TT)\mid \chi\vert_{G(\xi)_\1}=\chi_\xi \}.$
 \end{itemize}
There is a group isomorphism  
\begin{equation}
\label{free}
G(\xi)/G(\xi)_\1\simeq\ZZ^r
\end{equation}
 for a suitable integer $r\ge0$, 
cf. \cite[Eq. (2.3)]{BB25} and the references therein.

The group $G$ acts on the set $\Bun(\gg^*):=\bigsqcup\limits_{\xi\in\gg^*}\{\xi\}\times \mathop{G}\limits^{\mathsout{\wedge}}(\xi) 
$, by $(g, (\xi, \chi))\mapsto (g\xi, g\chi)$, where $g\chi\in \mathop{G}\limits^{\mathsout{\wedge}}(g\xi)$ 
is defined by  $(g\chi)(h)=\chi(g^{-1}hg)$ for all $h\in \overline{G}(g\xi)$.  
The map  $\tau\colon \Bun(\gg^*)\to\gg^*$,  $\tau(\xi,\chi):=\xi$, 
 intertwines 
 the coadjoint action $G\times\gg^*\to\gg^*$ 
 and the action $G\times \Bun(\gg^*)\to\Bun(\gg^*)$, $(g,(\xi,\chi))\mapsto (g\xi,g\chi)$.

Let \Rel\  be 
the equivalence relation on $\gg^*$ defined  by 
$$\xi \text{ is \Rel-equivalent to $\eta$  } \iff \overline{G\xi}=\overline{G\eta}, $$
where  $\xi,\eta\in\gg^*.$
The equivalence classes of \Rel\ are called \emph{quasi-orbits} of the coadjoint action and their set is denoted by $(\gg^*/G)^\sim$. 
 
For every coadjoint quasi-orbit $\Oc\in(\gg^*/G)^\sim$ 
the subset $\Bun(\Oc):=\tau^{-1}(\Oc)\subseteq\Bun(\gg^*)$ is $G$-invariant 
and has the natural structure of a smooth manifold.  
The closures (in $\Bun(\Oc)$) of $G$-orbits  constitute a partition of $\Bun(\Oc)$. 
We denote by
	$(\Bun(\Oc)/G)^\approx=\{\overline{Gp}\mid p=(\xi,\chi)\in\Bun(\Oc)\}$
the set of these orbit closures.
Thus
$$\RelS:=\bigcup\limits_{\Oc\in (\gg^*/G)^\sim}(\Bun(\Oc)/G)^\approx,$$ 
is a  partition of $\Bun(\gg^*)$.
Each $\O\in\RelS$ is called a \emph{generalized orbit of the coadjoint action of~$G$}. 
There are bijective mappings  
\begin{equation}
\label{kerl0}
\xymatrix{ 
	\RelS \ar[r]^{\ell\quad} & \stackrel{\frown}{G}_\nor \ar[r]^{\ker\quad} & \Prim(G)
}
\end{equation}
where $\ell$ is called the \emph{Puk\'anszky correspondence}. 
Here we denote by $\stackrel{\frown}{G}$ the set of all quasi-equivalence classes $[\pi]^\frown$ of factor unitary representations $\pi$ of the Lie group $G$, 
and  $\stackrel{\frown}{G}_\nor\subseteq \stackrel{\frown}{G}$ is the subset corresponding to normal representations, that is, unitary representations $\pi\colon G\to\Bc(\Hc_\pi)$ for which the von Neumann algebra $\pi(G)''\subseteq \Bc(\Hc_\pi)$ is a factor with a semifinite normal faithful trace $\tau$ satisfying $0<\tau(\pi(a^*a))<\infty$ for some element $a\in C^*(G)$. 
We omit the construction of $\ell$, but a description of the composition $\ker\circ\ell$ is given in \eqref{kerl} below.

\subsection{Remarks on metabelian Lie groups}
In this subsection we derive special properties of the above structures in the special case when the Lie group $G$ is metabelian, i.e., its derived group $D$ is abelian. 

\begin{remark}\label{indreps}
	\normalfont
	Let $\xi\in\gg^*\setminus\dg^\perp$  and $\chi\in\mathop{G}\limits^{\mathsout{\wedge}}(\xi)$, hence $p:=(\xi,\chi)\in\Bun(\gg^*)$. 
	There exists a complex polarization $\hg\subseteq \dg_\CC $ of  $\xi\vert_\dg$ 
	which is invariant to the reduced stabilizer $\overline{G}(\xi)$, 
	cf.  \cite[pages 493--494]{Pu71}. 
	Define $\mg := \dg \cap \hg$ and let $M:=\exp \mg \subset D$ be the subgroup corresponding to $M$. 
	Then $K_0:= \overline{G}(\xi)M$ is a closed subgroup of $G$ and there exists a 
	unique group homomorphism 
	\begin{equation}
	\label{indreps_eq1}
	\varphi_{\xi,\chi}\colon K_0 \to \TT, 
	\end{equation}
	to be denoted $\varphi_p$ (where $p:=(\xi,\chi)$ as above) or even just $\varphi$ for simplicity, 
	such that $\varphi\vert_{\overline{G}(\xi)}= \chi$ and $\varphi\vert_{M}=\ee^{\ie \xi}$, 
	where $\ee^{\ie \xi}\vert_M\colon M\to\TT$ is the group homomorphism whose differential at the unit element is the Lie algebra homomorphism $\ie\xi\vert_\mg\colon\mg=\dg\cap\hg\to\ie \RR$. 
	
	We define $K:=  \overline{G}(\xi)D=D\overline{G}(\xi)$. 
	This is again a closed subgroup of $G$ 
	and $K_0\subseteq K$
	(see \cite[Prop. 6.1, page 503]{Pu71}).
	We obtain by holomorphic induction from $K_0$ to $K$ the irreducible unitary  representation 
		$\ind(\hg,p;K)$ 
	of $K$, which is a sub-representation of the induced representation 
	$\Ind_{K_0}^K \varphi$. 
Using	the ordinary unitary induction 
	recalled at the beginning of Subsection~\ref{subsect:ind}, 
we define 
	$$T(p):=\Ind_K^G(\ind(\hg,p;K))$$
	and this is a semifinite factor representation of $G$, 
	cf. the proof of \cite[Cor. 7.1, pages 509]{Pu71} and \cite[Lemma 7.3, page 510]{Pu71}.
	Moreover, if we select $\Oc\in(\gg^*/G)^\sim$ with $\xi\in\Oc$ 
	and we denote $\O:=\overline{Gp}\subseteq\Bun(\Oc)$
	then, by \cite[Prop. 1 and proof of its Cor., page 93]{Pu73},
	\begin{equation}
	\label{kerl}
	\ker\ell(\O)=\ker T(p)\in\Prim(G)
	\end{equation}
	Recall that we use the same notation  
	both for the unitary representation $T(p)$
	and for its corresponding nondegenerate $*$-representation of $C^*(G))$. 
	
	Under the additional assumption that $G$ is metabelian, that is, $\dg=[\gg,\gg]$ 
	is abelian, then $\hg= \dg_\CC$ and $K_0=K$. 
	Hence 
	$$\ind(\hg,p;K)=\varphi_p\colon \overline{G}(\xi)D=K\to \TT$$ 
	and the above equalities show that, for $p=(\xi,\chi)\in\Bun(\gg^*)$, 
		\begin{equation}
		\label{indreps_eq2}
		T(p)=\Ind_K^G(\varphi_p)
	\text{ and }
	\ker\ell(\O)=\ker (\Ind_K^G(\varphi_p))\in\Prim(G).
	\end{equation}
\end{remark}

\begin{lemma}
\label{L3}
If $G$ is a 1-connected metabelian Lie group, then the following assertions hold: 
\begin{enumerate}[{\rm(i)}]
\item\label{L3_item1} For every $\xi\in\gg^*$  and $\chi\in\mathop{G}\limits^{\mathsout{\wedge}}(\xi)$ 
the unitary representation $\Ind_{D\overline{G}(\xi)}^G(\varphi_{\xi,\chi})$ is a factor representation. 
\item\label{L3_item2} For every $\Pc\in\Prim(G)$ 
there exist $\xi\in\gg^*$  and $\chi\in\mathop{G}\limits^{\mathsout{\wedge}}(\xi)$ 
with 
$$\Pc=\ker(\Ind_{D\overline{G}(\xi)}^G(\varphi_{\xi,\chi})).$$ 
\end{enumerate}
\end{lemma}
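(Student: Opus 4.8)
The plan is to read off both assertions from the description of the representations $T(p)$ and the Puk\'anszky correspondence recalled in Remark~\ref{indreps}, the only genuine work being to cover the degenerate locus $\xi\in\dg^\perp$ that Remark~\ref{indreps} explicitly excludes. First I would record what Remark~\ref{indreps} already yields in the metabelian case: for $p=(\xi,\chi)\in\Bun(\gg^*)$ with $\xi\in\gg^*\setminus\dg^\perp$, equation \eqref{indreps_eq2} identifies
\[
\Ind_{D\overline{G}(\xi)}^G(\varphi_{\xi,\chi})=T(p),
\]
which is a semifinite factor representation, and moreover $\ker\ell(\overline{Gp})=\ker\bigl(\Ind_{D\overline{G}(\xi)}^G(\varphi_{\xi,\chi})\bigr)$. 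Thus for $\xi\notin\dg^\perp$ both \eqref{L3_item1} and the corresponding instance of \eqref{L3_item2} are immediate.

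Next I would treat $\xi\in\dg^\perp$ directly. Since $\dg$ is an ideal containing the image of each $\ad y$, one has $\Ad(g^{-1})x-x\in\dg$ for all $g\in G$ and $x\in\gg$, whence $\langle g\xi,x\rangle=\langle\xi,x\rangle$; therefore $G(\xi)=G$ and $G(\xi)_\1=G$. The character $\chi_\xi$ is then trivial on the derived group $D$, so every commutator lies in $\Ker\chi_\xi$, forcing $\overline{G}(\xi)=G$ and $\mathop{G}\limits^{\mathsout{\wedge}}(\xi)=\{\chi_\xi\}$. Consequently the construction of Remark~\ref{indreps} degenerates: $\hg=\dg_\CC$, $M=D$, $K_0=K=D\overline{G}(\xi)=G$, and $\varphi_{\xi,\chi}=\chi_\xi$, so that
\[
\Ind_{D\overline{G}(\xi)}^G(\varphi_{\xi,\chi})=\Ind_G^G(\chi_\xi)=\chi_\xi .
\]
Being one-dimensional, $\chi_\xi$ is a factor representation, which completes \eqref{L3_item1}.

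For \eqref{L3_item2} I would invoke the surjectivity of the composite bijection $\ker\circ\ell\colon\RelS\to\Prim(G)$ from \eqref{kerl0}: every $\Pc\in\Prim(G)$ equals $\ker\ell(\O)$ for some generalized orbit $\O=\overline{Gp}$ with $p=(\xi,\chi)$, and the two cases above rewrite this as $\Pc=\ker\bigl(\Ind_{D\overline{G}(\xi)}^G(\varphi_{\xi,\chi})\bigr)$. In the degenerate case one checks that the coadjoint orbit $G\xi=\{\xi\}$ is a single point, hence $\Bun(\Oc)=\{p\}$ and $\overline{Gp}=\{p\}$, and that this one-point generalized orbit is sent by $\ell$ to the quasi-equivalence class of the normal factor representation $\chi_\xi$, giving $\ker\ell(\overline{Gp})=\ker\chi_\xi$.

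The machinery is entirely absorbed into Remark~\ref{indreps} and \eqref{kerl0}, so the substantive point --- and the one place where I expect to be careful --- is precisely the degenerate locus $\xi\in\dg^\perp$: there both the polarization $\hg$ and the representation $T(p)$ collapse, and one must confirm that the identity $T(p)=\Ind_{D\overline{G}(\xi)}^G(\varphi_{\xi,\chi})$ and the correspondence formula $\ker\ell(\overline{Gp})=\ker T(p)$ persist through the degeneration. Concretely, this amounts to verifying that the one-dimensional characters of $G$ trivial on $D$ are exactly the part of $\Prim(G)$ indexed by the coadjoint quasi-orbits contained in $\dg^\perp$, so that the formula in \eqref{L3_item2} holds uniformly in $\xi$.
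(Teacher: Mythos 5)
Your proposal is correct and takes essentially the same route as the paper: assertion (i) is read off from Remark~\ref{indreps} (for metabelian $G$ one has $K_0=K=D\overline{G}(\xi)$ and $T(p)=\Ind_K^G(\varphi_p)$ is a semifinite factor representation), and assertion (ii) combines the bijectivity of $\ker\circ\ell$ in \eqref{kerl0}, i.e.\ \cite[Thm.~1]{Pu73}, with \eqref{indreps_eq2}. The paper's two-line proof does not separately address the locus $\xi\in\dg^\perp$ that Remark~\ref{indreps} formally excludes; your explicit check that everything there collapses to the character $\chi_\xi$ (with $G(\xi)=\overline{G}(\xi)=G$, so that $\Ind_{D\overline{G}(\xi)}^G(\varphi_{\xi,\chi})=\Ind_G^G(\chi_\xi)=\chi_\xi$, trivially a factor representation whose kernel is the corresponding primitive ideal) is a correct supplement to the same argument, not a different method.
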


\begin{proof}
\eqref{L3_item1}
This follows by  Remark~\ref{indreps} since $D\overline{G}(\xi)=K$ and $(\xi,\chi)=p\in\Bun(\gg^*)$. 

\eqref{L3_item2}
Use \cite[Thm. 1]{Pu73}
along with \eqref{indreps_eq2} in Remark~\ref{indreps}. 
\end{proof}

The following lemma plays an important role in Section~\ref{sect:solvtypeI}, 
namely in the proof of Lemma~\ref{L10}. 

\begin{lemma}
\label{L7}
If $G$ is a 1-connected solvable Lie group and $\xi\in\gg^*$ whose closed subgroup $D\overline{G}(\xi)\subseteq G$ is connected, then $\overline{G}(\xi)$ is connected. 
\end{lemma}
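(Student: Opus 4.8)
The plan is to show directly that the hypothesis forces $\overline{G}(\xi)=G(\xi)_\1$, which is connected. Throughout I use that $G(\xi)_\1$ is open in $G(\xi)$, so that $G(\xi)_\1\subseteq\overline{G}(\xi)\subseteq G(\xi)$ are Lie subgroups all sharing the Lie algebra $\gg(\xi)$, and $\overline{G}(\xi)$ is connected if and only if $\overline{G}(\xi)=G(\xi)_\1$. The two nontrivial ingredients are a connectedness statement for the stabilizer inside the nilpotent part $D$, and a reduction of the connectedness of $D\overline{G}(\xi)$ to a statement in the abelian quotient $G/D$.

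First I would prove that $D\cap G(\xi)$ is connected, hence contained in $G(\xi)_\1$. Since $G$ is solvable, $\ad x$ is nilpotent for every $x\in\dg=[\gg,\gg]$, so $\Ad(d)$ is unipotent for every $d\in D$ and the coadjoint action of $D$ on $\gg^*$ is by unipotent affine maps; moreover $D$ is a $1$-connected nilpotent group. The stabilizer $D\cap G(\xi)=\{d\in D\mid d\xi=\xi\}$ is then the group of real points of a Zariski-closed subgroup of the unipotent algebraic group $D$, hence itself unipotent, hence connected. (Equivalently, the orbit $D\xi$ is closed and simply connected, and the homotopy exact sequence of $D\cap G(\xi)\hookrightarrow D\to D\xi$ gives $\pi_0(D\cap G(\xi))=0$ since $\pi_1(D\xi)=\pi_0(D)=0$.) Being connected and containing $\1$, the subgroup $D\cap G(\xi)$ lies in $G(\xi)_\1$; in particular $\overline{G}(\xi)\cap D\subseteq G(\xi)_\1$.

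Next I would use the hypothesis to show $D\overline{G}(\xi)=DG(\xi)_\1$. Let $\pi\colon G\to G/D$ be the quotient morphism; since $G$ is $1$-connected solvable, $G/D\cong\RR^k$ is a vector group, and the image $V:=\pi(G(\xi)_\1)$ equals the linear subspace that is the image of $\gg(\xi)$ under the differential of $\pi$, so $V$ is closed and $DG(\xi)_\1=\pi^{-1}(V)$. As $\pi$ has connected kernel $D$, a subgroup containing $D$ is connected precisely when its image under $\pi$ is connected; thus the hypothesis makes $\pi(\overline{G}(\xi))$ a connected subgroup of $\RR^k$ containing $V$. Because $\overline{G}(\xi)/G(\xi)_\1$ is isomorphic to a subgroup of $\ZZ^r$ by \eqref{free}, we have $\pi(\overline{G}(\xi))=V+\Lambda$ with $\Lambda$ finitely generated, so its image in $\RR^k/V$ is a countable connected subgroup, hence trivial. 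Therefore $\pi(\overline{G}(\xi))=V$ and $D\overline{G}(\xi)=\pi^{-1}(V)=DG(\xi)_\1$.

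Finally the conclusion is a short computation combining the two steps. Given $g\in\overline{G}(\xi)\subseteq DG(\xi)_\1$, write $g=dh$ with $d\in D$ and $h\in G(\xi)_\1\subseteq\overline{G}(\xi)$; then $d=gh^{-1}\in\overline{G}(\xi)\cap D\subseteq G(\xi)_\1$ by the first step, whence $g\in G(\xi)_\1$. This gives $\overline{G}(\xi)=G(\xi)_\1$, which is connected. I expect the main obstacle to be the first step: the connectedness of the stabilizer $D\cap G(\xi)$ is exactly where one must exploit the nilpotence of $D$ and the unipotence of the action, since a generic closed subgroup of a $1$-connected nilpotent group (such as $\ZZ\subseteq\RR$) is not connected.
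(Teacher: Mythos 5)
Your proof is correct and takes essentially the same route as the paper's: both arguments rest on the connectedness of $D\cap G(\xi)$ (which the paper simply cites from Puk\'anszky in the form $D\cap\overline{G}(\xi)=D\cap G(\xi)=\exp(\dg\cap\gg(\xi))$, and which you reprove via the theory of unipotent actions), on the countability of $\overline{G}(\xi)/G(\xi)_\1$, and on transferring the connectedness hypothesis through the quotient by $D$; your explicit computation in $G/D\simeq\RR^k$ is a concrete version of the paper's appeal to the second isomorphism theorem $(D\overline{G}(\xi))/D\simeq\overline{G}(\xi)/(D\cap\overline{G}(\xi))$ followed by the surjection onto the discrete countable group $\overline{G}(\xi)/G(\xi)_\1$. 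The only substantive difference is that you prove, rather than cite, the connectedness of the unipotent stabilizer, which makes your argument more self-contained at the cost of invoking standard facts on unipotent algebraic group actions.
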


\begin{proof}
As noted in \cite[page 494, line 7]{Pu71} (where the commutator subgroup $D$ is denoted by $L$), the subgroup $D\overline{G}(\xi)\subseteq G$ is closed and 
\begin{equation}
\label{L7_eq0}
D\cap\overline{G}(\xi)=D\cap G(\xi)=\exp(\dg\cap\gg(\xi)).
\end{equation}  
Since this last group is connected, we obtain 
\begin{equation}
\label{L7_eq2}
D\cap\overline{G}(\xi)\subseteq G(\xi)_\1.
\end{equation}
Since $G(\xi)_\1$ is equal to the connected component of $\1\in\overline{G}(\xi)$, 
it follows by \cite[Prop. 9.1.15]{HN12} that $G(\xi)_\1$ is clopen in $\overline{G}(\xi)$, 
hence  the group
\begin{equation}
	\label{L7_eq3}	\overline{G}(\xi)/G(\xi)_\1\text{ is at most countable and its quotient topology is  discrete.}
\end{equation}
See also \eqref{free}. 

On the other hand, by the second isomorphism theorem for Lie groups 
\cite[Ch. III, \S 3, no. 8, Prop. 31]{Bo06}
we have a Lie group isomorphism 
\begin{equation*}
(D\overline{G}(\xi))/D \simeq \overline{G}(\xi)/(D\cap \overline{G}(\xi)).
\end{equation*}
The left-had side of the above relation is a connected Lie group by the hypothesis that 
$D\overline{G}(\xi)\subseteq G$ is connected, and then 
the quotient group $\overline{G}(\xi)/(D\cap \overline{G}(\xi))$ is connected as well. 
Since \eqref{L7_eq2} ensures that the natural map 
$$\overline{G}(\xi)/(D\cap \overline{G}(\xi))\to \overline{G}(\xi)/G(\xi)_\1$$ 
is well defined, continuous and surjective, 
it then follows that $\overline{G}(\xi)/G(\xi)_\1$ is connected. 
Then, by \eqref{L7_eq3}, we obtain $\overline{G}(\xi)=G(\xi)_\1$, 
that is, 
$\overline{G}(\xi)$ is connected. 
\end{proof}

\subsection{Remarks on solvable Lie groups of type I}
In the special case when the solvable Lie group $G$ is type~\I, 
several aspects of the Puk\'anszky correspondence \eqref{kerl0} are simpler and 
covered by the Auslander-Kostant theory \cite{AuKo71}. 
More specifically, if $G$ is type \I, then there are canonical bijective maps $\stackrel{\frown}{G}_\nor\to\widehat{G}$, cf. \cite[5.5.3 and 6.7.5]{Di64}, and 
$\ker\colon \widehat{G}\to\Prim(G)$. 
In addition, every coadjoint quasi-orbit $\Oc\in(\gg^*/G)^\sim$ is actually a coadjoint orbit, and moreover every $G$-orbit in $\Bun(\Oc)$ is closed, 
cf. \cite[Prop. 5.2]{BB25}. 
This subsection is devoted to establishing a few basic facts in this simplified setting, for later reference. 

\begin{notation}
\label{N0}
\normalfont
For an arbitrary 1-connected solvable Lie group $G$ of type~\I\ we denote by 
$$\kappa\colon  \bigsqcup_{\Oc \in \gg^*/G} \Bun(\Oc)/G \to \widehat{G}$$
its corresponding Auslander-Kostant-Puk\'anszky bijective correspondence, 
related to \eqref{kerl0} by the above remarks.   
For every coadjoint orbit $\Oc \in\gg^*/G$  we have the following: 
\begin{enumerate}[{\rm(i)}]
	\item\label{N0_item1} There exists an integer $\rk(\Oc)\ge0$ with $\widehat{\pi_1(\Oc)}\simeq\ZZ^{\rk(\Oc)}$  and a $G$-equivariant bijection $\psi\colon \Oc\times \TT^{\rk(\Oc)}\to\Bun(\Oc)$, 
	where  $\pi_1(\Oc)$ is the fundamental group of $\Oc$, 
	cf. \cite[Eq. (2.6)]{BB25}. 
	For $\xi\in\Oc$ and $r$ as in \eqref{free} we have $\rk(\Oc)=r$. 
	\item\label{N0_item2} The mapping 
	$$\kappa_\Oc:=\kappa\vert_{\Bun(\Oc)/G}\colon 
	\Bun(\Oc)/G\to \widehat{G}$$ 
	is continuous, by 
	\cite[Lemma 8, p. 93 and Thm. 1, p. 114]{Pu73}. 
	\item\label{N0_item3} We have a homeomorphism $\psi_\Oc\colon\TT^{\rk(\Oc)}\to \Bun(\Oc)/G$ by \eqref{N0_item1} above. 
	See also \cite[Prop. 5.2 and its proof]{BB25}.
\end{enumerate}
\end{notation}

The following lemma develops a deformation technique that is based on \eqref{free} above and is needed in  
the proof of Lemma~\ref{L10} in Section~\ref{sect:solvtypeI} . 

\begin{lemma}
	\label{L8}
	Let $G$ be a 1-connected solvable Lie group of type~\I, $\xi\in\gg^*$, and $\xi\in\Hom(G(\xi),\TT)$ with $\chi\vert_{G(\xi)_\1}=\chi_\xi$, that is, $(\xi,\chi)\in\Bun(\gg^*)$. 
	Select $a_1,\dots,a_r\in G(\xi)$ such that the family $a_1G(\xi)_\1,\dots,a_rG(\xi)_\1$ is a basis in the  finitely-generated free abelian group $G(\xi)/G(\xi)_\1$. 
	For $j=1,\dots,r$ denote $z_j:=\chi(a_j)\in\TT$. 
	
	For every $t\in\RR^\times$ 
	there exists 
	 $\chi_t\in \Hom(G(\xi),\TT)$ with $\chi_t\vert_{G(\xi)_\1}=\chi_{t\xi}$ and $\chi_t(a_j)=z_j$ for $j=1,\dots,r$. 
	Moreover, the following assertions hold: 
	\begin{enumerate}[{\rm(i)}]
		\item\label{L8_item1} There exists a unique function 
		$\mathop{\chi}\limits^{\circ}\in  \Hom(G(\xi),\TT)$ with $\mathop{\chi}\limits^{\circ}\vert_{G(\xi)_\1}= 1$ and $\mathop{\chi}\limits^{\circ}(a_j)=z_j$ for $j=1,\dots,r$. 
		\item\label{L8_item2} There exists a unique function 
			$\varphi_{0,\mathop{\chi}\limits^{\circ}}\in\Hom(DG(\xi),\TT)$ with
			$\varphi_{0,\mathop{\chi}\limits^{\circ}}\vert_{G(\xi)}=\mathop{\chi}\limits^{\circ}$ and $\varphi_{0,\mathop{\chi}\limits^{\circ}}\vert_D= 1$.  
		\item\label{L8_item3} We have $\lim\limits_{t\to0}\varphi_{t\xi,\chi_t}=\varphi_{0,\mathop{\chi}\limits^{\circ}}$  uniformly on every compact subset of $DG(\xi)$. 
	\end{enumerate}
\end{lemma}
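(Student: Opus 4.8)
The plan is to produce, for each $t\in\RR^\times$, the deformed character $\chi_t$ as the unique point of a suitable torus lying over $t\xi$, and then to read off \eqref{L8_item1}--\eqref{L8_item3} by letting $t\to0$. Since $t\neq0$ one has $G(t\xi)=G(\xi)$ and $\gg(t\xi)=\gg(\xi)$, so $\chi_{t\xi}\in\Hom(G(\xi)_\1,\TT)$ is well defined, and the whole problem is to extend $\chi_{t\xi}$ from $G(\xi)_\1$ to $G(\xi)$ while prescribing the values $z_j$ on the $a_j$. The key input is that $G$ is of type~\I: then the reduced stabilizer coincides with the full one, $\overline{G}(\eta)=G(\eta)$ for every $\eta\in\gg^*$, and over every coadjoint orbit the fibre of $\Bun$ is a nonempty torus by Notation~\ref{N0}\eqref{N0_item1}. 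Applying this to $\eta=t\xi$ I get $\mathop{G}\limits^{\mathsout{\wedge}}(t\xi)=\{\chi'\in\Hom(G(\xi),\TT)\mid \chi'\vert_{G(\xi)_\1}=\chi_{t\xi}\}\neq\emptyset$.

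Next I would exploit that $\mathop{G}\limits^{\mathsout{\wedge}}(t\xi)$ is a torsor under $\Hom(G(\xi)/G(\xi)_\1,\TT)\cong\TT^r$, the isomorphism being evaluation on the basis $a_1G(\xi)_\1,\dots,a_rG(\xi)_\1$ (see \eqref{free}). Hence $\chi'\mapsto(\chi'(a_1),\dots,\chi'(a_r))$ is a bijection of $\mathop{G}\limits^{\mathsout{\wedge}}(t\xi)$ onto $\TT^r$, and I define $\chi_t$ to be the unique preimage of $(z_1,\dots,z_r)$; this gives both $\chi_t\vert_{G(\xi)_\1}=\chi_{t\xi}$ and $\chi_t(a_j)=z_j$. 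For later use I also record that $G(\xi)_\1$, being a connected Lie subgroup of the $1$-connected solvable group $G$, is simply connected, so $\chi_\xi$ lifts to a continuous homomorphism $\widetilde{\chi_\xi}\colon G(\xi)_\1\to\RR$ with $\widetilde{\chi_\xi}(\exp_G x)=\langle\xi,x\rangle$; comparing differentials on the simply connected group $G(\xi)_\1$ then yields $\chi_{t\xi}=\ee^{\ie t\widetilde{\chi_\xi}}$ for every $t$.

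Assertions \eqref{L8_item1} and \eqref{L8_item2} are the $t=0$ versions of this construction. For \eqref{L8_item1}, a homomorphism trivial on $G(\xi)_\1$ factors through $G(\xi)/G(\xi)_\1\cong\ZZ^r$, on which a $\TT$-valued character is freely and uniquely determined by its values on a basis; prescribing $a_j\mapsto z_j$ gives $\mathop{\chi}\limits^{\circ}$. For \eqref{L8_item2}, uniqueness is clear because $DG(\xi)$ is generated by $D$ and $G(\xi)$; for existence I note that $D\cap G(\xi)=\exp(\dg\cap\gg(\xi))$ is connected, hence contained in $G(\xi)_\1$, where $\mathop{\chi}\limits^{\circ}\equiv1$, so the prescriptions $\varphi\vert_{G(\xi)}=\mathop{\chi}\limits^{\circ}$ and $\varphi\vert_D=1$ agree on the overlap and assemble into a homomorphism $\varphi_{0,\mathop{\chi}\limits^{\circ}}$ factoring through $DG(\xi)/(DG(\xi))_\1\cong\ZZ^r$.

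Finally, for \eqref{L8_item3} I would use that $\varphi_{t\xi,\chi_t}$ and $\varphi_{0,\mathop{\chi}\limits^{\circ}}$ are the homomorphisms on $DG(\xi)$ restricting to $\chi_t$, resp.\ $\mathop{\chi}\limits^{\circ}$, on $G(\xi)$ and to $\ee^{\ie t\xi}\vert_D$, resp.\ the trivial character, on $D$. Given a compact $C\subseteq DG(\xi)$, openness of the multiplication $D\times G(\xi)\to DG(\xi)$ lets me write $C\subseteq C_DC_G$ with $C_D\subseteq D$ and $C_G\subseteq G(\xi)$ compact. On $C_D$ the characters $\ee^{\ie t\xi}\vert_D$ tend to $1$ uniformly (their $\RR$-lifts are bounded on $C_D$), while on $C_G$, which meets only finitely many cosets $a_1^{n_1}\cdots a_r^{n_r}G(\xi)_\1$, the identity $\chi_t(a_1^{n_1}\cdots a_r^{n_r}h)=\big(\prod_j z_j^{n_j}\big)\ee^{\ie t\widetilde{\chi_\xi}(h)}$ together with the boundedness of $\widetilde{\chi_\xi}$ on the relevant compact set of $h$ shows $\chi_t\to\mathop{\chi}\limits^{\circ}$ uniformly; combining the two factors gives the claimed uniform convergence. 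The main obstacle is the very existence of $\chi_t$: for $t\neq1$ there is no a priori reason for $\chi_{t\xi}$ to kill the commutators $[a_i,a_j]\in G(\xi)_\1$, and it is precisely the type~\I\ hypothesis, through $\overline{G}(t\xi)=G(t\xi)=G(\xi)$ (equivalently the nonemptiness of the fibres of $\Bun$), that guarantees $\chi_{t\xi}([a_i,a_j])=1$ and hence the extendability underlying the torsor argument.
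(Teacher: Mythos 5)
Your proposal is correct and takes essentially the same approach as the paper: the extendability of $\chi_{t\xi}$ to the full stabilizer $G(\xi)$ (your $\overline{G}(\eta)=G(\eta)$ under the type~\I\ hypothesis) is precisely what the paper extracts from Auslander--Kostant integrality \cite[\S V.1]{AuKo71}, your torsor/evaluation argument is the paper's explicit twist of an extension $\chi_t^0$ by the character $\psi$ of $G(\xi)/G(\xi)_\1\simeq\ZZ^r$ with $\psi(a_j)=z_j\chi_t^0(a_j)^{-1}$, items (i)--(ii) are handled the same way (via $D\cap G(\xi)=\exp(\dg\cap\gg(\xi))\subseteq G(\xi)_\1$), and your argument for (iii) merely supplies the details the paper calls ``straightforward''. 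One caveat: your parenthetical ``equivalently the nonemptiness of the fibres of $\Bun$'' is imprecise, since in Puk\'anszky's framework the fibres $\{\eta\}\times\mathop{G}\limits^{\mathsout{\wedge}}(\eta)$ are nonempty for every 1-connected solvable group (that is the very purpose of the reduced stabilizer), so the genuine type~\I\ content is exactly the equality $\overline{G}(\eta)=G(\eta)$, which you do assert and which is correct.
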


\begin{proof}
	We first note that for every $t\in\RR^\times$ we have $G(t\xi)=G(\xi)$ hence also $G(t\xi)_\1=G(\xi)_\1$. 
	Since the solvable Lie group $G$ is of type~\I, the coadjoint orbit of $t\xi\in\gg^*$ is integral by \cite[\S V.1]{AuKo71}, that is, there exists $\chi_t^0\in\Hom(G(\xi),\TT)$ 
	with $\chi_t^0\vert_{G(\xi)_1}=\chi_{t\xi}$. 
Moreover, since $a_1G(\xi)_\1,\dots,a_rG(\xi)_\1$ is a basis in the  finitely-generated free abelian group $G(\xi)/G(\xi)_\1$, 
there exists 
$\psi\in  
\Hom(G(\xi),\TT)$ 
with $\psi\vert_{G(\xi)_\1}= 1$ and $\psi(a_j)=z_j\chi_t^0(a_j)^{-1}$. 
We now define 
$$\chi_t\colon G(\xi)\to\TT,\quad \chi_t(a):=\psi(a)\chi_t^0(a).$$
This shows existence and uniqueness of $\chi_t\in \Hom(G(t\xi),\TT)$ as in the statement. 
(See also \cite[page 492]{Pu71}.) 
We now turn to the remaining assertions. 

	\eqref{L8_item1}
The function $\mathop{\chi}\limits^{\circ}\in  \Hom(G(\xi),\TT)$ can be constructed just as $\psi$ above. 
	
	\eqref{L8_item2}
	By  
	\eqref{L7_eq0} 
	we have 
	$D\cap G(\xi)=\exp(\dg\cap\gg(\xi))\subseteq G(\xi)_\1$ 
	hence $\mathop{\chi}\limits^{\circ}\vert_{D\cap G(\xi)}= 1$. 
	This shows that 
	$$\mathop{\chi}\limits^{\circ}\in\Hom(G(\xi)/(D\cap G(\xi)),\TT)\hookrightarrow \Hom(G(\xi),\TT).$$
	Using the group isomorphism $G(\xi)/(D\cap G(\xi))\simeq (DG(\xi))/D$, 
	we then obtain 
	$$\varphi_{0,\mathop{\chi}\limits^{\circ}}\in\Hom((DG(\xi))/D,\TT)\hookrightarrow\Hom(DG(\xi),\TT)$$
	as required. 
	
	\eqref{L8_item3} This is straightforward. 
\end{proof}

\section{Rigidity within the class of solvable Lie groups}
\label{sect:solvtypeI}

This section performs the second of the main steps towards proving Theorem~\ref{dual_main}, as explained in the Introduction. 
Specifically, we prove the following fact, which is actually  Theorem~\ref{dual_main} restricted to the class of 1-connected solvable Lie groups. 

\begin{proposition}
	\label{P13}
	If $H$ is a 1-connected solvable Lie group of type~\I\ 
	and there exists an integer $n\ge 1$ such that for the $(2n+1)$-dimensional Heisenberg group $H_n$ we have a homeomorphism  $\widehat{H}\simeq\widehat{H_n}$  then there is a Lie group isomorphism $H\simeq H_n$. 
\end{proposition}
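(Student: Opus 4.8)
The plan is to transport the canonical Heisenberg partition of $\widehat{H_n}$ across the homeomorphism to $\widehat{H}$ and then to recover the Lie algebra $\mathfrak h$ of $H$ from the two pieces, using the Auslander--Kostant--Puk\'anszky correspondence of Notation~\ref{N0}, the deformation of Lemma~\ref{L8}, and the circle--exclusion of Lemma~\ref{L4}. First I would fix a homeomorphism $\Phi\colon\widehat{H_n}\to\widehat{H}$ and push the canonical parameterization $\widehat{H_n}=\RR^\times\sqcup\RR^{2n}$ forward to obtain a partition $\widehat{H}=\Gamma_1\sqcup\Gamma_2$. By the remark following Lemma~\ref{L00} this is a Heisenberg partition, and by Lemma~\ref{L0} it is the unique one; in particular $\Gamma_1$ is open with $\Gamma_1\simeq\RR^\times$, while $\Gamma_2\simeq\RR^{2n}$ is the unique maximal element of $\ClH(\widehat{H})$.

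Second, and this is the heart of the argument, I would identify $\Gamma_2$ with the closed subset $\widehat{H/D}\subseteq\widehat{H}$ of characters, where $D$ is the derived subgroup corresponding to $\dg:=[\mathfrak h,\mathfrak h]$ (necessarily $\dg\ne0$, else $\widehat H$ would be a vector group). The inclusion $\widehat{H/D}\subseteq\Gamma_2$ is immediate: $H/D$ is a simply connected abelian group, so $\widehat{H/D}\simeq(\mathfrak h/\dg)^*$ is closed, connected and Hausdorff, hence lies in the maximal element $\Gamma_2$. For the reverse inclusion I would fix $\xi\in\mathfrak h^*$ with $\xi\vert_\dg\ne0$ and consider the scaling family $t\mapsto T(t\xi,\chi_t)$ for $t\in(0,\infty)$; each such factor representation is a non--character, hence lies in $\Gamma_1$. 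By Lemma~\ref{L8}\eqref{L8_item3} the inducing data $\varphi_{t\xi,\chi_t}$ converge as $t\to0$ to $\varphi_{0,\mathop{\chi}\limits^{\circ}}$, which is trivial on $D$, so by continuity of induction every limit point of this family is trivial on $D$, i.e.\ a character. Writing $A_0:=\psi_1^{-1}(\{T(t\xi,\chi_t)\}_{0<t<\varepsilon})$, the fact that these representations accumulate at characters shows $\Gamma_2\cap\overline{\psi_1(A_0)}\ne\emptyset$, whence by clause~\eqref{L0_item3} we get $\Gamma_2\subseteq\overline{\psi_1(A_0)}$; since $\Gamma_2\cap\psi_1(A_0)=\emptyset$, every point of $\Gamma_2$ is a limit point of the family and therefore a character. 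Thus $\Gamma_2=\widehat{H/D}\simeq\RR^{2n}$, and in particular $\dim(\mathfrak h/\dg)=2n$.

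Third, I would apply Lemma~\ref{L4} to conclude that every coadjoint orbit $\Oc$ has $\rk(\Oc)=0$. Indeed, for $\rk(\Oc)\ge1$ the set $\Bun(\Oc)/G\simeq\TT^{\rk(\Oc)}$ is carried into $\widehat{H}$ by the continuous injection $\kappa_\Oc$ of Notation~\ref{N0}\eqref{N0_item2}, and restricting to a coordinate circle yields a continuous injective map $\TT\to\widehat{H}$ whose image lies in $\Gamma_2$ by Lemma~\ref{L4}; for $\Oc\not\subseteq\dg^\perp$ these are non--characters, contradicting $\Gamma_2=\widehat{H/D}$, while for $\Oc\subseteq\dg^\perp$ the orbit is a fixed point and already has rank $0$. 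Hence each non--central orbit contributes a single representation, and $\Gamma_1$ is homeomorphic to the space of coadjoint orbits not contained in $\dg^\perp$, which is $\simeq\RR^\times$.

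Finally I would translate this picture into the structure of $\mathfrak h$. A dimension count gives $\dim\dg$ odd, and the one--dimensionality of the non--central orbit space, combined with the requirement from~\eqref{L0_item3} that this family degenerate onto the \emph{whole} of $\dg^\perp$ as $t\to0$ while admitting no accumulation in $\widehat H$ as $t\to\infty$, forces $\dg$ to be one--dimensional and central and the induced alternating form on $\mathfrak h/\dg\simeq\RR^{2n}$ to be nondegenerate; this identifies $\mathfrak h$ with the Heisenberg Lie algebra and yields $H\simeq H_n$. I expect the main obstacle to be the reverse inclusion $\Gamma_2\subseteq\widehat{H/D}$ of the second step: the abstract accumulation clause only supplies limits of $\Gamma_1$--representations, and turning these into representations trivial on $D$ requires matching the abstract $\RR^\times$--parameter with the concrete scaling deformation of Lemma~\ref{L8} and controlling the convergence of the (in general, holomorphically) induced representations in the dual topology. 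The closing structural identification of $\dg$ as one--dimensional and central is likewise delicate, since the dimension count alone does not exclude a higher--dimensional derived algebra.
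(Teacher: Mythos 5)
Your opening step (transporting the canonical parameterization of $\widehat{H_n}$ and invoking Remark~\ref{L00_rem}/Lemma~\ref{L0}) agrees with the paper, and your third step is essentially the paper's Lemma~\ref{L5}. But the heart of your argument, the identification $\Gamma_2=\widehat{H/D}$, has two genuine gaps. First, you place the deformation family $T(t\xi,\chi_t)$ inside $\Gamma_1$ ``because these are non-characters''; this is circular, since the statement that $\Gamma_2$ consists exactly of the characters is what you are trying to prove, and nothing rules out a priori that these factor representations have kernels in $\Gamma_2$. That is precisely the hard case, and the paper devotes Lemma~\ref{L10} to it: when the fibre over an orbit sits in $\Gamma_2$, one uses Hausdorffness of $\Gamma_2$ to show that $\Sp\bigl(\Ind_{DG(\xi)}^G(\varphi_{0,\mathop{\chi}\limits^{\circ}})\bigr)$ is a singleton, hence that this induced representation is a factor representation, and then Lemmas~\ref{L9} and~\ref{L7} yield a contradiction; your proposal has no substitute for this mechanism. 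Second, Fell's continuity of induction gives the wrong inequality for your purposes: as used in Lemma~\ref{L10}, it shows that $\Ind_{DG(\xi)}^G(\varphi_{0,\mathop{\chi}\limits^{\circ}})$ is weakly contained in the family $\{\pi_t\}$, i.e.\ that the points of its spectrum \emph{are} limit points; it does not show that \emph{every} limit point of the family is trivial on $D$. To bound the limit set from above you would need continuity of restriction plus an analysis of $\pi_t\vert_D$, whose spectrum is the closure of $t\cdot(G\xi)\vert_{\dg}$ in $\widehat{D}$; when this projection of the coadjoint orbit is unbounded (which you cannot exclude, since you do not yet know $H$ is nilpotent, let alone Heisenberg), the sets $t\cdot(G\xi)\vert_{\dg}$ need not collapse to $\{0\}$ as $t\to0$ --- compare Lemma~\ref{modelind}\eqref{modelind_item4}, where compactness of the orbit is exactly what makes the scaling argument work. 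A smaller, repairable point: your inclusion $\widehat{H/D}\subseteq\Gamma_2$ treats the unique \emph{maximal} element of $\ClH(\widehat{H})$ as a maximum; $\ClH$ also contains closed intervals inside $\Gamma_1$, so you need the dichotomy from Step~1 of the proof of Lemma~\ref{L0} together with the fact that $\RR^k$ is not homeomorphic to a closed subset of $\Gamma_1$.

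Beyond this, your closing structural step is, as you yourself note, incomplete: rank-zero orbits plus $\dim(\hg/\dg)=2n$ do not by themselves force $\dg$ to be one-dimensional and central with nondegenerate induced form. The paper's route is different and avoids this entirely: it never identifies $\Gamma_2$ with the characters, but instead passes to the metabelian quotient $H/[[H,H],[H,H]]$, shows all its coadjoint orbits are simply connected (Lemma~\ref{L11}), uses $T_1$-ness of the dual and Puk\'anszky's theorem to get type~R and hence closed coadjoint orbits, concludes nilpotency of $H$ from the closed-plus-simply-connected criterion of \cite[Thm.~1.1]{BB_RIMS} together with Hall's criterion (Lemma~\ref{L12}), and only then finishes by citing the dual rigidity of Heisenberg groups within the class of nilpotent Lie groups, \cite[Thm.~1.2]{BB-JTA}. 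Without these two external results, your final step would require an independent argument of comparable depth, which the proposal does not supply.
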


 We first note here that, since the Heisenberg group is a liminary group, it follows by\cite[9.5.3]{Di64} that $H$ is in turn liminary, in particular type \I. 
 Therefore we turn our attention to  type \I\ solvable Lie groups.

The proof of the above Proposition~\ref{P13}  will be given towards the end of this section, 
since it requires several lemmas.

\begin{lemma}
\label{L5}
Let $G$ be a 1-connected solvable Lie group of type~\I\  
with a mapping $\iota\colon\widehat{G}\to X$ that is a homomeomorphism onto 
a closed subset of a topological space $X$ with a Heisenberg partition  $X=\Gamma_1\sqcup\Gamma_2$. 
Then for every coadjoint orbit $\Oc\in\gg^*/G$ 
there exists $j\in\{1,2\}$ 
with  $\kappa(\Bun(\Oc)/G)\subseteq \Gamma_j\cap\iota(\widehat{G})$. 
If $\rk(\Oc)\ge 1$, then actually $\kappa(\Bun(\Oc)/G)\subseteq \Gamma_2\cap\iota(\widehat{G})$. 
\end{lemma}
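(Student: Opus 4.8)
The plan is to reduce the whole statement to the single topological input provided by Lemma~\ref{L4}, exploiting the torus parametrization of $\Bun(\Oc)/G$ recorded in Notation~\ref{N0}. Throughout I regard $\widehat{G}$ as the closed subset of $X$ to which it is homeomorphic, so that $\kappa$ takes values in $X$. First I would dispose of the degenerate case $\rk(\Oc)=0$: by Notation~\ref{N0}\eqref{N0_item3} the space $\Bun(\Oc)/G$ is then homeomorphic to $\TT^{0}$, a single point, so $\kappa(\Bun(\Oc)/G)$ is a singleton and therefore lies in exactly one of $\Gamma_1,\Gamma_2$, which settles the first assertion with the appropriate $j$.

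The substantive case is $\rk(\Oc)\ge1$, where I aim to prove $\kappa(\Bun(\Oc)/G)\subseteq\Gamma_2$; this yields the second assertion and, in this case, the first assertion with $j=2$. The three structural facts I would assemble are: $\psi_\Oc\colon\TT^{\rk(\Oc)}\to\Bun(\Oc)/G$ is a homeomorphism (Notation~\ref{N0}\eqref{N0_item3}); the map $\kappa_\Oc$ is continuous (Notation~\ref{N0}\eqref{N0_item2}); and $\kappa$ is injective, being a bijective correspondence, so $\kappa_\Oc$ is injective. Consequently $\kappa_\Oc\circ\psi_\Oc\colon\TT^{\rk(\Oc)}\to X$ is continuous and injective.

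I would then argue by contradiction. Suppose some $q\in\Bun(\Oc)/G$ satisfies $\kappa_\Oc(q)\in\Gamma_1$, and put $p:=\psi_\Oc^{-1}(q)=(z_1,\dots,z_{\rk(\Oc)})\in\TT^{\rk(\Oc)}$. Since $\rk(\Oc)\ge1$, the coordinate circle $c\colon\TT\to\TT^{\rk(\Oc)}$, $c(w):=(wz_1,z_2,\dots,z_{\rk(\Oc)})$, is continuous and injective with $c(1)=p$. Hence $\gamma:=\kappa_\Oc\circ\psi_\Oc\circ c\colon\TT\to X$ is continuous and injective, and $\gamma(1)=\kappa_\Oc(q)\in\Gamma_1$. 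But Lemma~\ref{L4} forces $\gamma(\TT)\subseteq\Gamma_2$, so $\gamma(1)\in\Gamma_1\cap\Gamma_2=\emptyset$, a contradiction. Thus $\kappa_\Oc(q)\in\Gamma_2$ for every $q\in\Bun(\Oc)/G$, i.e.\ $\kappa(\Bun(\Oc)/G)\subseteq\Gamma_2\cap\widehat{G}$.

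I do not expect a genuine obstacle once Lemma~\ref{L4} is available; the only points requiring care are the bookkeeping that the composite $\kappa_\Oc\circ\psi_\Oc\circ c$ is honestly injective (so that Lemma~\ref{L4} applies), and the elementary remark that through any prescribed point of a torus of dimension $\ge1$ there passes an embedded circle. The real topological content, namely that a circle cannot embed in $\Gamma_1\simeq\RR^\times$, has been entirely absorbed into Lemma~\ref{L4}.
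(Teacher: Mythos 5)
Your proposal is correct and follows essentially the same route as the paper: both handle $\rk(\Oc)=0$ as immediate and, for $\rk(\Oc)\ge 1$, apply Lemma~\ref{L4} to restrictions of the continuous injective map $\kappa_\Oc\circ\psi_\Oc\colon\TT^{\rk(\Oc)}\to\widehat{G}\subseteq X$ to circles in the torus. Your version merely spells out the coordinate-circle construction and the injectivity bookkeeping that the paper leaves implicit.
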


\begin{proof}
	We may assume without loss of generality that $\widehat{G}$ is a closed subset of $X$ and $\iota\colon\widehat{G}\hookrightarrow X$ is the corresponding set inclusion. 
	
If $\rk(\Oc)=0$, then the assertion is immediate. 

If $\rk(\Oc)\ge 1$, using Lemma~\ref{L4} 
for restrictions of the continuous injective map $$\kappa\circ\psi_\Oc=\kappa_\Oc\circ\psi_\Oc\colon \TT^{\rk(\Oc)}\to\widehat{G}$$ 
to circles contained in the torus $\TT^{\rk(\Oc)}$, 
we obtain $\kappa \circ \psi_\Oc(\TT^{\rk(\Oc)}) \subseteq \Gamma_2$. 
Since $\psi_\Oc$ is a homeomorphism by Notation~\ref{N0}\eqref{N0_item3}, 
the assertion follows. 
\end{proof} 

\begin{lemma}
	\label{L6}
	Let $G$ be a 1-connected metabelian Lie group of type~\I\ 
	with a mapping $\iota\colon\widehat{G}\to X$ that is a homomeomorphism onto
	a closed subset of a topological space $X$ with a Heisenberg partition  $X=\Gamma_1\sqcup\Gamma_2$.  
	If $(\xi,\chi)\in\Bun(\gg^*)$ and  
	$\ker(\Ind_{DG(\xi)}^G(\varphi_{t\xi,\chi_t}))\in\iota^{-1}(\Gamma_1)$, 
	then  $G(\xi)=G(\xi)_\1$. 
\end{lemma}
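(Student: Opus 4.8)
The plan is to recognize $\Ind_{DG(\xi)}^G(\varphi_{t\xi,\chi_t})$ as one of the factor representations that the Puk\'anszky/Auslander--Kostant correspondence attaches to a point of the bundle $\Bun(\Oc)$ over the coadjoint orbit $\Oc$ of $t\xi$, and then to read off the conclusion directly from Lemma~\ref{L5}. The whole argument turns on the dichotomy in that lemma: an orbit of rank $\ge1$ is carried entirely into $\Gamma_2$, whereas the hypothesis places our representation in $\Gamma_1=X\setminus\Gamma_2$.

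First I would fix $t\in\RR^\times$ and record that, since $t\ne0$, one has $G(t\xi)=G(\xi)$ and $G(t\xi)_\1=G(\xi)_\1$. Writing $\Oc$ for the coadjoint orbit of $t\xi$, this gives $\rk(\Oc)=r$ with $G(\xi)/G(\xi)_\1\simeq\ZZ^r$ as in \eqref{free}; in particular $G(\xi)=G(\xi)_\1$ is equivalent to $r=0$, i.e.\ to $\rk(\Oc)=0$. Next I would use that in the type~\I\ situation the reduced stabilizer coincides with the full stabilizer, $\overline{G}(t\xi)=G(t\xi)$. This is consistent with (indeed forced by) the parameterization of Notation~\ref{N0}\eqref{N0_item1}, where the fibre $\mathop{G}\limits^{\mathsout{\wedge}}(t\xi)$ is a $\TT^{\rk(\Oc)}$-torsor, so that $\overline{G}(t\xi)/G(t\xi)_\1\simeq\ZZ^r=G(t\xi)/G(t\xi)_\1$. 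Consequently $DG(\xi)=D\overline{G}(t\xi)$ is exactly the subgroup $K$ of Remark~\ref{indreps}, and $\chi_t\in\mathop{G}\limits^{\mathsout{\wedge}}(t\xi)$ by Lemma~\ref{L8}, whence $p_t:=(t\xi,\chi_t)\in\Bun(\Oc)$ and $\varphi_{t\xi,\chi_t}=\varphi_{p_t}$.

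With these identifications in hand, formula \eqref{indreps_eq2} gives
\[
\ker\bigl(\Ind_{DG(\xi)}^G(\varphi_{t\xi,\chi_t})\bigr)=\ker T(p_t)=\ker\ell(\overline{Gp_t})\in\Prim(G),
\]
and under the type~\I\ bijection $\kappa$ this primitive ideal is precisely the image of the closed orbit $\overline{Gp_t}=Gp_t\in\Bun(\Oc)/G$. Thus the kernel lies in $\kappa(\Bun(\Oc)/G)$, viewed inside $\widehat{G}\subseteq X$. I would then argue by contradiction: if $G(\xi)\ne G(\xi)_\1$, then $\rk(\Oc)=r\ge1$, so Lemma~\ref{L5} forces $\kappa(\Bun(\Oc)/G)\subseteq\Gamma_2$, whence $\ker(\Ind_{DG(\xi)}^G(\varphi_{t\xi,\chi_t}))\in\Gamma_2$. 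This contradicts the hypothesis that the kernel lies in $\Gamma_1=X\setminus\Gamma_2$, and therefore $G(\xi)=G(\xi)_\1$.

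The step I expect to require the most care is the first identification: justifying $\overline{G}(t\xi)=G(t\xi)$ in the type~\I\ case (appealing to the Auslander--Kostant setting recalled in Notation~\ref{N0} and to \cite{BB25}) and checking that $\Ind_{DG(\xi)}^G(\varphi_{t\xi,\chi_t})$ is \emph{literally} the representation $T(p_t)$ of Remark~\ref{indreps}, so that \eqref{indreps_eq2} together with the bijectivity of $\kappa$ applies verbatim. Once that bookkeeping is settled, the conclusion is an immediate application of Lemma~\ref{L5}; in particular neither the limit $t\to0$ of Lemma~\ref{L8} nor Lemma~\ref{L9} is needed here, these entering only later in the proof of Lemma~\ref{L10}.
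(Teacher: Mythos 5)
Your proof is correct and follows essentially the same route as the paper's: the paper likewise identifies $\Ind_{DG(\xi)}^G(\varphi_{t\xi,\chi_t})$ as the Puk\'anszky factor representation attached to the point $(t\xi,\chi_t)\in\Bun(\gg^*)$ (Lemma~\ref{L3}\eqref{L3_item1}, via Remark~\ref{indreps}), so that its kernel lies in $\Prim(G)\simeq\widehat{G}\subseteq X$ and corresponds to a point of $\kappa(\Bun(\Oc)/G)$ for the coadjoint orbit $\Oc$ of $\xi$, and then reads off $\rk(\Oc)=0$, i.e.\ $G(\xi)=G(\xi)_\1$, from the dichotomy of Lemma~\ref{L5}. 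One small repair to your bookkeeping step: the torsor argument only yields an abstract isomorphism $\overline{G}(t\xi)/G(t\xi)_\1\simeq\ZZ^{\rk(\Oc)}$, which does not by itself force $\overline{G}(t\xi)=G(t\xi)$ (a rank-$r$ subgroup of $\ZZ^r$ can be proper); the clean justification is integrality of the orbit in the type~\I\ case, as invoked in the proof of Lemma~\ref{L8}: since $\chi_{t\xi}$ extends to a character of the full stabilizer $G(t\xi)$, every commutator of $G(t\xi)$ lies in $\Ker\chi_{t\xi}$, hence $\overline{G}(t\xi)=G(t\xi)$.
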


\begin{proof}
As in the proof of Lemma~\ref{L5}, 
we  assume that $\widehat{G}$ is a closed subset of $X$ and $\iota\colon\widehat{G}\hookrightarrow X$ is the corresponding set inclusion. 
	
The hypothesis makes sense since the unitary representation  $\Ind_{DG(\xi)}^G(\varphi_{t\xi,\chi_t})$ is a factor representation by Lemma~\ref{L3}\eqref{L3_item1}, hence 
$$\ker(\Ind_{DG(\xi)}^G(\varphi_{t\xi,\chi_t}))
\in\Prim(G)\simeq\widehat{G}\subseteq 
X=\Gamma_1\sqcup\Gamma_2.$$
If we denote by $\Oc\in\gg^*/G$ the coadjoint orbit of $\xi$, then by the hypothesis along with Lemma~\ref{L5} we obtain $\rk(\Oc)=0$, that is, $G(\xi)=G(\xi)_\1$. 
\end{proof}

\begin{lemma}
	\label{L10}
Let $G$ be a 1-connected metabelian Lie group of type~\I\ 
with a mapping $\iota\colon\widehat{G}\to X$ that is a homomeomorphism onto
a closed subset of a topological space~$X$ with a Heisenberg partition  $X=\Gamma_1\sqcup\Gamma_2$.  
If $\Oc\in\gg^*/G$ 
and 
$\iota(\kappa(\Bun(\Oc)/G))\subseteq \Gamma_2$, 
then for every $\xi\in\Oc$ we have $G(\xi)=G(\xi)_\1$. 
\end{lemma}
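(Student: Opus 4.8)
The plan is to argue by contradiction and reduce everything to showing that an orbit with disconnected stabilizer cannot occur. Recall from Notation~\ref{N0}\eqref{N0_item1} and \eqref{free} that $G(\xi)=G(\xi)_\1$ is equivalent to $\rk(\Oc)=0$, so I assume $\rk(\Oc)=r\ge 1$ and seek a contradiction. First I would dispose of a degenerate case: if $\xi\in\dg^\perp$ then $\xi$ is fixed by the coadjoint action (each $\Ad(g)$ acts trivially modulo $\dg$), so $\Oc=\{\xi\}$ and $G(\xi)=G$ is connected, contradicting $r\ge 1$; hence $\xi\notin\dg^\perp$. I would then invoke the deformation of Lemma~\ref{L8}: fix $\chi$ with $(\xi,\chi)\in\Bun(\gg^*)$, choose $a_1,\dots,a_r$ as there, and form the family $\chi_t$ ($t\in\RR^\times$) together with its limit data $\mathop{\chi}\limits^{\circ}$ and $\varphi_{0,\mathop{\chi}\limits^{\circ}}$.

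For each $t\in\RR^\times$ set $\sigma_t:=\Ind_{DG(\xi)}^G(\varphi_{t\xi,\chi_t})$. By Lemma~\ref{L3}\eqref{L3_item1} this is a factor representation, and since $G$ is type~\I\ it is quasi-equivalent to an irreducible one, so $[\sigma_t]$ is a single point of $\widehat{G}$, namely the value of $\kappa$ on the corresponding $G$-orbit in $\Bun(\Oc_t)$, where $\Oc_t$ denotes the coadjoint orbit of $t\xi$. Because scaling does not change the stabilizer, $G(t\xi)=G(\xi)$ and hence $\rk(\Oc_t)=r\ge 1$ for every $t\ne 0$; thus Lemma~\ref{L5} gives $[\sigma_t]\in\Gamma_2\cap\widehat{G}$ for all $t\ne 0$ (for $t=1$ this is also the hypothesis $\kappa(\Bun(\Oc)/G)\subseteq\Gamma_2$). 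Now I would let $t\to 0$: by Lemma~\ref{L8}\eqref{L8_item3} the inducing characters converge uniformly on compacta, $\varphi_{t\xi,\chi_t}\to\varphi_{0,\mathop{\chi}\limits^{\circ}}$, so by the continuity of induction in the inner hull-kernel topology (see \cite{Fe62}) the representations converge, $\sigma_t\to\sigma_0:=\Ind_{DG(\xi)}^G(\varphi_{0,\mathop{\chi}\limits^{\circ}})$, i.e.\ $\Sp(\sigma_t)=\{[\sigma_t]\}\to\Sp(\sigma_0)$ in $\Cl(\Prim(G))$.

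Next I would identify the limit. Since $\varphi_{0,\mathop{\chi}\limits^{\circ}}$ is trivial on the normal subgroup $D$ by Lemma~\ref{L8}\eqref{L8_item2} and $D\subseteq DG(\xi)$, the induced representation $\sigma_0$ is trivial on $D$ and hence factors through the abelian group $G/D$; viewing it as induced from the subgroup $DG(\xi)/D$ up to $G/D$, its support $\Sp(\sigma_0)=:C$ is a coset of the subgroup $(G/DG(\xi))^{\wedge}$ of $\widehat{G/D}$. Crucially $C$ contains at least two points, because $DG(\xi)\ne G$: otherwise $\varphi_{t\xi,\chi_t}$ would be a character of $G$, hence trivial on the derived subgroup $D$, contradicting $\varphi_{t\xi,\chi_t}\vert_D=\ee^{\ie t\xi}\vert_D\ne\1$, which holds since $\xi\notin\dg^\perp$. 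Moreover $C\subseteq\widehat{G/D}$, and $\widehat{G/D}\simeq\RR^{\dim(G/D)}$ is a closed, connected, Hausdorff subset of $\widehat{G}\subseteq X$; hence it lies in the unique maximal element $\Gamma_2$ of $\ClH(X)$ by Lemma~\ref{L0}, so $C\subseteq\Gamma_2\cap\widehat{G}$.

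Finally I would extract the contradiction from the fact that $\Gamma_2\cap\widehat{G}$ is Hausdorff, being a subspace of $\Gamma_2\simeq\RR^{2n}$. Pick distinct $c_1,c_2\in C$ and disjoint relatively open $W_1,W_2\subseteq\Gamma_2\cap\widehat{G}$ with $c_i\in W_i$, and write $W_i=O_i\cap(\Gamma_2\cap\widehat{G})$ with $O_i$ open in $\widehat{G}$. Since $c_i\in C\cap O_i$, convergence $\{[\sigma_t]\}\to C$ in the inner topology forces $[\sigma_t]\in O_i$ for all $t$ near $0$; but $[\sigma_t]\in\Gamma_2\cap\widehat{G}$, so in fact $[\sigma_t]\in W_i$ for $i=1,2$ with $t$ small, which is impossible as $W_1\cap W_2=\emptyset$. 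This contradiction shows $r=0$, that is, $G(\xi)=G(\xi)_\1$. I expect the main obstacle to be precisely this last interplay: one must \emph{simultaneously} keep the deformed representations inside the Hausdorff stratum $\Gamma_2$ (via Lemma~\ref{L5}) and show that their inner-topology limit genuinely splits into at least two separated points (via $DG(\xi)\ne G$ together with the continuity of induction), since it is only the coexistence of these two facts --- impossible in a Hausdorff space --- that yields the contradiction.
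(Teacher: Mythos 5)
Your proposal is correct, and its first half runs along the same lines as the paper's proof: the same deformation $\varphi_{t\xi,\chi_t}$ from Lemma~\ref{L8}, factoriality of $\sigma_t$ via Lemma~\ref{L3}, membership of $[\sigma_t]$ in $\Gamma_2\cap\widehat{G}$ (the paper quotes Lemma~\ref{L6}; you rederive it from Lemma~\ref{L5} and the Puk\'anszky correspondence, which is the same mechanism), and Fell's continuity of induction to exhibit $\sigma_0:=\Ind_{DG(\xi)}^G(\varphi_{0,\mathop{\chi}\limits^{\circ}})$ as a limit of the net. The endgame, however, is genuinely different. The paper argues: Hausdorffness of $\Gamma_2\cap\Prim(G)$ forces $\Sp(\sigma_0)$ to be a singleton, hence $\sigma_0$ is a factor representation by \cite[Cor. 5.2.5]{Di64}; twisting by an extension of $\varphi_{0,\mathop{\chi}\limits^{\circ}}$ to a character of $G$ (via \cite{Mo77}) shows that $\Ind_{DG(\xi)}^G(\tau_0)$ is then a factor representation; Lemma~\ref{L9} forces $DG(\xi)=G$, and Lemma~\ref{L7} then makes $G(\xi)$ connected, contradicting the standing assumption. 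You run the implication in the opposite direction: from $r\ge1$ you first get $\xi\notin\dg^\perp$, hence $DG(\xi)\ne G$ (a character of $G$ is trivial on $D$, while $\varphi_{t\xi,\chi_t}\vert_D=\ee^{\ie t\xi}\vert_D\ne\1$), hence $\Sp(\sigma_0)$ is a coset of the nontrivial annihilator $(G/DG(\xi))^{\wedge}$ and has at least two points, each of which is a limit of the net $\{[\sigma_t]\}$; two distinct limits of a net lying in the Hausdorff closed set $\Gamma_2\cap\widehat{G}$ is impossible. This bypasses Lemmas~\ref{L7} and~\ref{L9}, \cite{Mo77}, and the factoriality criterion altogether, at the cost of the (standard) explicit computation of the spectrum of a character induced over the abelian quotient $G/D$; both contradictions ultimately exploit the same fact, namely that inducing over a proper closed subgroup of $G/D$ produces a large spectrum.

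One step is misjustified, though harmlessly. You claim $\widehat{G/D}\subseteq\Gamma_2$ because $\widehat{G/D}\in\ClH(X)$ and $\Gamma_2$ is ``the unique maximal element'' of $\ClH(X)$. Lemma~\ref{L0} asserts that $\Gamma_2$ is the only maximal element of the poset $\ClH(X)$, not that it is a greatest element, and these are different: nothing in the statement prevents an element of $\ClH(X)$ from being contained in no maximal element. The concern is not vacuous: when $\dim(G/D)=1$, a closed copy of $\RR\simeq\widehat{G/D}$ could a priori sit inside $\Gamma_1$, e.g.\ as $\psi_1([1,\infty))$, which is a closed, connected, Hausdorff subset of $X$. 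Fortunately you do not need this claim at all: you have already shown that each $c_i\in\Sp(\sigma_0)$ is a limit of the net $\{[\sigma_t]\}$, and this net lies in $\Gamma_2\cap\widehat{G}$, which is closed in $\widehat{G}$; hence $c_i\in\Gamma_2\cap\widehat{G}$ automatically. With that one-line replacement your argument is complete.
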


\begin{proof}
	Since the group $G$ is type \I, the mapping $\ker\colon \widehat{G}\to\Prim(G)$ is a homeomorphism. 
	Thus, $\Prim(G)$ is homeomorphic to a closed subset of a topological space $X$. 
	As in the proofs of Lemmas \ref{L5} and \ref{L6}, 
	we  assume that $\widehat{G}$ is actually a closed subset of $X$ and $\iota\colon\widehat{G}\hookrightarrow X$ is the corresponding set inclusion. 
	Then the hypothesis on $\Oc$ takes on the form 
	$\kappa(\Bun(\Oc)/G)\subseteq \Gamma_2\cap\widehat{G}$.

We now fix $(\xi,\chi)\in\Bun(\Oc)$ and, assuming $G(\xi)_\1\subsetneqq G(\xi)$, we will arrive at a contradiction. 

To this end, for every $t\in\RR^\times$, we note that $G(t\xi)=G(\xi)$ 
and we define $(t\xi,\chi_t)\in\Bun(\gg^*)$ as in Lemma~\ref{L8}. 
Then, by Lemma~\ref{L3}\eqref{L3_item1}, the unitary representation 
$$\pi_t:=\Ind_{DG(\xi)}^G(\varphi_{t\xi,\chi_t})$$
is a factor representation. 
By assumption, for every $t\in\RR^\times$, we have $G(t\xi)_\1\subsetneqq G(t\xi)$, 
hence by Lemma~\ref{L6} we have $\ker\, \pi_t\in\Gamma_2\cap\Prim(G)$. 

By Lemma~\ref{L8}\eqref{L8_item3} we have $\lim\limits_{t\to0}\varphi_{t\xi,\chi_t}=\varphi_{0,\mathop{\chi}\limits^{\circ}}$  uniformly on every compact subset of $DG(\xi)$. 
Therefore, by continuity of induction in the sense of \cite[Th. 4.2]{Fe62}, 
it follows that for every sequence $(t_n)_{n\in\NN}$ in $\RR^\times$ with $\lim\limits_{n\to\infty}t_n=0$, 
the unitary representation $\Ind_{DG(\xi)}^G(\varphi_{0,\mathop{\chi}\limits^{\circ}})$ is weakly contained in 
the family of factor representations $\{\pi_{t_n}\mid n\in\NN\}$. 
Using the notation in \eqref{general_eq1}, 
it then directly follows that every point in $\Sp(\Ind_{DG(\xi)}^G(\varphi_{0,\mathop{\chi}\limits^{\circ}}))$ 
is a cluster point of every subnet of $\{[\pi_t]\}_{t\in\RR^\times}$ for $t\to 0$. 
Since $\{\ker\, \pi_t\mid t\in\RR^\times\}\subseteq\Gamma_2\cap\Prim(G)$ 
and $\Gamma_2\cap\Prim(G)$ is a closed subset of $\Prim(G)$ whose relative topology is Hausdorff, 
it follows that the set $\Sp(\Ind_{DG(\xi)}^G(\varphi_{0,\mathop{\chi}\limits^{\circ}}))$  is a singleton, 
that is, there is only one equivalence class of irreducible representations that are weakly contained in $\Ind_{DG(\xi)}^G(\varphi_{0,\mathop{\chi}\limits^{\circ}})$. 
This implies by  \cite[Cor. 5.2.5]{Di64} that $\Ind_{DG(\xi)}^G(\varphi_{0,\mathop{\chi}\limits^{\circ}})$ is a factor representation.  

On the other hand, since 
$\varphi_{0,\mathop{\chi}\limits^{\circ}}\in\Hom(DG(\xi),\TT)$ and $\varphi_{0,\mathop{\chi}\limits^{\circ}}\vert_D=1$, 
it follows that $\varphi_{0,\mathop{\chi}\limits^{\circ}}$ can be regarded as a character of the quotient group $(DG(\xi))/D$, 
which is a closed subgroup of the abelian Lie group $G/D$. 
It then  
follows by \cite[Cor., page 91]{Mo77} that there exists $\widetilde{\varphi}\in\Hom(G,\TT)$ with $\widetilde{\varphi}\vert_{DG(\xi)}=\varphi_{0,\mathop{\chi}\limits^{\circ}}$. 
This further implies 
\begin{align*}
\Ind_{DG(\xi)}^G(\varphi_{0,\mathop{\chi}\limits^{\circ}})
&=\Ind_{DG(\xi)}^G(\widetilde{\varphi}\vert_{DG(\xi)}) \\
&=\Ind_{DG(\xi)}^G(\tau_0\otimes(\widetilde{\varphi}\vert_{DG(\xi)})) \\
&=(\Ind_{DG(\xi)}^G(\tau_0))\otimes \widetilde{\varphi}
\end{align*}
where the last equality follows by \cite[Lemma 4.2, page 260]{Fe62} 
and we have denoted by $\tau_0\in\Hom(DG(\xi),\TT)$ the trivial representation. 
The above equalities show that, if we denote  $\pi_1:=\Ind_{DG(\xi)}^G(\varphi_{0,\mathop{\chi}\limits^{\circ}})$ and 
$\pi_2:=\Ind_{DG(\xi)}^G(\tau_0)$, 
there exists a complex Hilbert space $\Hc$ such that $\pi_1,\pi_2\colon DG(\xi)\to\Uc(\Hc)$ and 
and for every $g\in DG(\xi)$ we have $\pi_1(g)=\widetilde{\varphi}(g)\pi_2(g)$, where $\widetilde{\varphi}(g)\in\TT$. 
This shows that for every operator $T\in\Bc(\Hc)$ and $g\in DG(\xi)$ we have 
$T\pi_1(g)=\pi_1(g)T$ if and only if $T\pi_2(g)=\pi_2(g)T$. 
That is, these representations have the same commutant 
$\pi_1(DG(\xi))'=\pi_2(DG(\xi))'$ hence also the same bicommutant 
$\pi_1(DG(\xi))''=\pi_2(DG(\xi))''$. 
Since we established above that 
$\Ind_{DG(\xi)}^G(\varphi_{0,\mathop{\chi}\limits^{\circ}})$ is a factor representation, it then follows that 
$\Ind_{DG(\xi)}^G(\tau_0)$ is a factor representation. 
Then Lemma~\ref{L9} implies $DG(\xi)=G$. 
In particular, $DG(\xi)$ is connected and then, by Lemma~\ref{L7}, we obtain that $G(\xi)$ is connected, 
which is a contradiction with the assumption $G(\xi)_\1\subsetneqq G(\xi)$. 
This completes the proof. 
\end{proof}

\begin{lemma}
	\label{L11}
	Let $G$ be a 1-connected metabelian Lie group of type~\I\ 
	whose unitary dual $\widehat{G}$ is homeomorphic to 
	a closed subset of a topological space $X$ with a Heisenberg partition. 
	Then for every $\xi\in\gg^*$ we have $G(\xi)=G(\xi)_\1$. 
\end{lemma}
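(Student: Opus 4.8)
The plan is to reduce the claim to a statement about the rank of the coadjoint orbit. Fix $\xi\in\gg^*$ and let $\Oc\in\gg^*/G$ be its coadjoint orbit (a genuine orbit since $G$ is of type~\I). By Notation~\ref{N0}\eqref{N0_item1} together with the isomorphism \eqref{free}, the integer $\rk(\Oc)$ coincides with the rank $r$ for which $G(\xi)/G(\xi)_\1\simeq\ZZ^r$. Consequently $\rk(\Oc)=0$ holds if and only if $G(\xi)=G(\xi)_\1$, so it suffices to prove that $\rk(\Oc)=0$ for every coadjoint orbit $\Oc$.

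I would establish this by contradiction. Assume $\rk(\Oc)\ge1$. Then the second part of Lemma~\ref{L5} forces $\kappa(\Bun(\Oc)/G)\subseteq\Gamma_2\cap\widehat{G}$: by Notation~\ref{N0}\eqref{N0_item3} an orbit of positive rank carries an embedded circle $\TT\hookrightarrow\Bun(\Oc)/G$ whose $\kappa$-image must land in $\Gamma_2$ by Lemma~\ref{L4}, and since $\psi_\Oc$ is a homeomorphism this propagates to the whole stratum. With this containment in hand, Lemma~\ref{L10} applies verbatim and yields $G(\xi)=G(\xi)_\1$ for every $\xi\in\Oc$. But $G(\xi)=G(\xi)_\1$ means $r=0$, i.e. $\rk(\Oc)=0$, contradicting the assumption $\rk(\Oc)\ge1$. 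Hence $\rk(\Oc)=0$, and the desired equality $G(\xi)=G(\xi)_\1$ follows.

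At this stage the argument is a purely formal assembly of the two preceding lemmas, so I do not expect a genuine obstacle here; all of the analytic content has already been concentrated in Lemma~\ref{L10} (the deformation $\varphi_{t\xi,\chi_t}\to\varphi_{0,\mathop{\chi}\limits^{\circ}}$, the continuity of unitary induction, and the factoriality/commutant comparison that ultimately invokes Lemma~\ref{L9} and Lemma~\ref{L7}). The only point deserving a word of care is the identification $\rk(\Oc)=r$ from Notation~\ref{N0}, which guarantees that the topological invariant distinguishing $\Gamma_2$ from $\Gamma_1$ in Lemma~\ref{L5} is exactly the obstruction to the stabilizer $G(\xi)$ being connected; once this dictionary is fixed, the contradiction closes immediately.
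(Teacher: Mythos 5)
Your proof is correct and uses essentially the same argument as the paper: both rest entirely on the second part of Lemma~\ref{L5} (rank $\ge 1$ forces $\kappa(\Bun(\Oc)/G)\subseteq\Gamma_2\cap\widehat{G}$) combined with Lemma~\ref{L10}, the paper phrasing it as a two-case analysis on whether the image lies in $\Gamma_1$ or $\Gamma_2$, while you package the identical ingredients as a contradiction on $\rk(\Oc)$ via the identification $\rk(\Oc)=r$ from Notation~\ref{N0}. The difference is purely one of presentation.
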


\begin{proof}
		As in the proofs of Lemmas \ref{L5}, \ref{L6} and \ref{L10}, 
	we may assume without loss of generality that $\widehat{G}$ is actually a closed subset of $X$.
	Denote by $X=\Gamma_1\sqcup\Gamma_2$ the Heisenberg partition of~$X$. 
	  
Let $\Oc:=G\xi\in\gg^*/G$. 
It follows by Lemma~\ref{L5} that 
either $\kappa(\Bun(\Oc)/G)\subseteq\Gamma_2\cap\widehat{G}$ 
or $\kappa(\Bun(\Oc)/G)\subseteq\Gamma_1\cap\widehat{G}$ 
and $G(\xi)=G(\xi)_\1$. 
Moreover, if $\kappa(\Bun(\Oc)/G)\subseteq\Gamma_2\cap\widehat{G}$, 
then 
by Lemma~\ref{L10} we obtain again $G(\xi)=G(\xi)_\1$.
\end{proof}

\begin{lemma}
	\label{L12}
	If $H$ is a 1-connected solvable Lie group of type~\I\ whose unitary dual has a Heisenberg partition,  then $H$ is a nilpotent Lie group. 
\end{lemma}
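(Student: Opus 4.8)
The plan is to derive nilpotency from the characterization of nilpotent groups among the $1$-connected solvable Lie groups recorded in \cite[Thm. 1.1]{BB_RIMS}, according to which a $1$-connected solvable Lie group is nilpotent precisely when all of its coadjoint orbits are closed in $\gg^*$ and simply connected, the latter being equivalent to connectedness of the isotropy groups $H(\xi)$ since $H$ is $1$-connected. Accordingly I would split the argument into two tasks: (a) show that every coadjoint stabilizer $H(\xi)$ is connected, i.e. $\rk(\Oc)=0$ for every orbit $\Oc$; and (b) show that every coadjoint orbit is closed. The whole point is that both properties should be forced by the very rigid shape of the Heisenberg partition $X=\Gamma_1\sqcup\Gamma_2$ into which $\widehat H$ embeds as a closed subset.

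For task (a) the natural move is to reduce to the metabelian case so that Lemma~\ref{L11} applies. Let $N\trianglelefteq H$ be the closed connected normal subgroup with Lie algebra $[\dg,\dg]$, so that $\bar H:=H/N$ is $1$-connected and metabelian, and is again of type~\I\ because $C^*(\bar H)$ is a quotient of the type~\I\ algebra $C^*(H)$. Since $\widehat{\bar H}$ is exactly the set of irreducible representations of $H$ that are trivial on $N$, it is a closed subset of $\widehat H$, hence of $X$, and it inherits a Heisenberg partition; therefore Lemma~\ref{L11} yields that all isotropy groups of the coadjoint action of $\bar H$ are connected. To transport this back to $H$ itself I would invoke Lemma~\ref{L5}: any orbit $\Oc$ with $\rk(\Oc)\ge 1$ has $\kappa(\Bun(\Oc)/H)\subseteq\Gamma_2\cong\RR^{2n}$, and Lemma~\ref{L4} severely restricts how the compact tori $\TT^{\rk(\Oc)}$ can be continuously and injectively placed inside the Hausdorff piece $\Gamma_2$; combined with the metabelian information this should exclude $\rk(\Oc)\ge 1$ for every orbit.

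For task (b) I would argue purely topologically inside $X$. Because $X$ is $T_1$ — so every point of $\widehat H\cong\Prim(H)$ is closed — and because its only non-Hausdorff behaviour is the accumulation of sequences in $\Gamma_1\cong\RR^\times$ onto the whole of $\Gamma_2$, any non-closed coadjoint orbit (which for a type~\I\ group manifests as a genuine limit relation among orbits in $\Prim(H)$) can only be accommodated in that single controlled $\Gamma_1\to\Gamma_2$ direction. Tracking via Lemma~\ref{L5} which orbits land in $\Gamma_1$ and which in $\Gamma_2$, one should be able to conclude that each orbit is in fact closed. Combining (a) and (b) with \cite[Thm. 1.1]{BB_RIMS} then gives that $H$ is nilpotent.

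I expect the main obstacle to be precisely the reduction in task (a) from the metabelian quotient $\bar H$ back to $H$. Lemma~\ref{L11} only controls the orbits meeting $[\dg,\dg]^\perp\subseteq\gg^*$, whereas nilpotency of $H$ is genuinely \emph{not} detected by its metabelianization, since deeper terms of the derived series can spoil it. Thus the metabelian input must be amplified by the global scarcity of the Heisenberg dual: the two independent sources of non-nilpotency — real weights, which tend to open up orbits and violate closedness, and imaginary weights, which introduce a $\ZZ^{\rk(\Oc)}$ in the isotropy and violate simple connectivity — have to be excluded simultaneously. Marrying the rank data coming from Lemmas~\ref{L5} and~\ref{L11} with the topological closedness argument, and making the passage from $\bar H$ to $H$ rigorous, is where the real work will lie.
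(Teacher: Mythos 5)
Your reduction to the metabelian quotient $\bar H=H/[[H,H],[H,H]]$ and the application of Lemma~\ref{L11} there coincide with the paper's first step (and this part is sound: Lemma~\ref{L11} only requires $\widehat{\bar H}$ to be a closed subset of a space with a Heisenberg partition, which you verified, not that $\widehat{\bar H}$ itself carries such a partition). But both places where you locate ``the real work'' are genuine gaps, and the paper closes them with two specific tools absent from your outline. The decisive one is P.~Hall's nilpotency criterion (cited in the paper as \cite[Lemma 3.2]{BB_RIMS}): since $H$ is solvable, $[H,H]$ is a \emph{nilpotent} normal subgroup, so if $H/[[H,H],[H,H]]$ is nilpotent then $H$ is nilpotent. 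Your assertion that ``nilpotency of $H$ is genuinely not detected by its metabelianization'' is therefore false in the solvable setting, and the obstacle you single out --- transporting the connectivity of stabilizers from $\bar H$ back to $H$ via Lemmas~\ref{L5} and~\ref{L4} --- never has to be overcome. Indeed it could not be overcome with those tools: the whole chain Lemmas~\ref{L6}--\ref{L10} depends on the metabelian hypothesis (through Lemma~\ref{L3} and Remark~\ref{indreps}), so nothing in the paper controls the stabilizers of $H$ itself when $H$ is not metabelian. The correct architecture is the reverse of yours: prove that the \emph{quotient} $\bar H$ has closed and simply connected coadjoint orbits, conclude that $\bar H$ is nilpotent by \cite[Thm. 1.1]{BB_RIMS}, and only then lift nilpotency to $H$ by Hall's criterion.

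The second gap is your task (b). The ``purely topological'' argument for closedness of orbits is only a heuristic: to convert a non-closed coadjoint orbit into a violation of the $T_1$ property of $\Prim(H)$ you would need to know that orbit-closure relations are faithfully reflected by the hull-kernel topology, i.e. strong continuity/homeomorphism properties of the Auslander--Kostant--Puk\'anszky correspondence that are not available at this level of generality and cannot be waved through. The paper replaces this by a citation chain: the Heisenberg partition forces $\widehat H$, hence $\widehat{\bar H}$, to be $T_1$ (Lemma~\ref{L00}); Puk\'anszky's theorem \cite[Thm. 2]{Pu73} then gives that $H$ and $\bar H$ are of type~R (all roots purely imaginary); and type~R groups have closed coadjoint orbits (\cite{Pu78}, \cite{Pu82}, \cite{Pe82}). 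Without Hall's criterion and without this type-R detour, your outline cannot be completed as written.
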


\begin{proof}
We define $H^{[1]}:=[H,H]$, $H^{[2]}:=[H^{[1]},H^{[1]}]$, and $G:=H/H^{[2]}$. 
Then  $G$ is a 1-connected metabelian Lie group of type~\I\ for which $\widehat{G}$ is homeomorphic to a closed subset of the topological space $\widehat{H}$ that has a Heisenberg partition. 
Therefore, by Lemma~\ref{L11}, every coadjoint orbit of $G$ is simply connected. 

On the other hand, since the topological space $\widehat{H}$ is $T_1$, 
it follows that $\widehat{G}$ is $T_1$ as well. 
Hence, by \cite[Thm. 2]{Pu73}, the solvable Lie groups $G$ and $H$ are type~R, 
in the sense that all roots of their Lie algebras are purely imaginary. 
This further implies that all the coadjoint orbits of $G$ and $H$ are closed 
in the dual spaces of the Lie algebras of these groups. 
(See \cite[Prop. 3 and Thm. 2]{Pu78} for groups of type~\I, 
or \cite[page 444]{Pu82} and \cite[page 470]{Pe82} for general groups.)

In particular, all coadjoint orbits of $G$ are closed and simply connected, hence, by  \cite[Thm. 1.1]{BB_RIMS}, the Lie group $G$ is nilpotent. 
This further implies by the criterion of P. Hall 
(see e.g., \cite[Lemma 3.2]{BB_RIMS}) that the group $H$ is nilpotent. 
\end{proof}

We are now in a position to prove Proposition~\ref{P13}. 

\begin{proof}[Proof of Proposition~\ref{P13}]
The hypothesis  $\widehat{H}\simeq\widehat{H_n}$ shows that $\widehat{H}$ has a Heisenberg partition. 
Then, by Lemma~\ref{L12}, the Lie group $H$ is nilpotent. 
Now, by the dual rigidity property of the Heisenberg group within the class of nilpotent Lie groups established in \cite[Thm. 1.2]{BB-JTA}, we obtain a Lie group isomorphism $H\simeq H_n$. 
\end{proof}

The statement of Proposition~\ref{P13} may not hold true if we replace the Heisenberg group by another nilpotent Lie group. 
Specifically, 
we construct below an uncountable family of 1-connected nilpotent Lie groups 
that are pairwise non-isomorphic and yet their unitary dual spaces are homeomorphic to each other. 

\begin{example}
\label{E14}
	\normalfont
	For all $s,t\in\RR\setminus\{0\}$ let $\gg_0(s,t)$ be the 6-dimensional 2-step nilpotent Lie algebra 
	defined by the commutation relations 
	$$[X_6,X_5]=sX_3,\ [X_6,X_4]=(s+t)X_2,\ [X_5,X_4]=tX_1. $$
	Then
	$$\{\gg_0(s,t)\mid s^2+st+t^2=1,\ 0<t\le 1/\sqrt{3}\}$$
	is a family of isomorphic Lie algebras 
	that are however pairwise non-isomorphic as symplectic Lie algebras 
	with 
	the common symplectic structure 
	$$\omega\colon \gg_0(s,t)\times\gg_0(s,t)\to\RR$$ given by $\omega(X_1,X_6)=\omega(X_2,X_5)=\omega(X_3,X_4)=1$. 
(See \cite[Ex. 5.1]{BBP} and the references therein.)
It then follows by \cite[Thm. 3]{SS78} that if we form the 1-dimensional central extensions $\gg(s,t):=\gg_0(s,t)\dotplus_\omega\RR$ as for instance in \cite[Ex. 6.8]{BB15}, we obtain an uncountable family of pairwise non-isomorphic nilpotent Lie algebras. 
See also the Lie algebras $(147E)$ and $(147E_1)$ in \cite[\S 7.3]{Gn98}. 

On the other hand, every Lie algebra $\gg_0(s,t)$ is isomorphic as Lie algebra to the free 2-step nilpotent Lie algebra $\ng$ of dimension~6, 
and  the nilpotent Lie group $G(s,t)$ has 1-dimensional center while its generic coadjoint orbits are flat. 
It then follows that the unitary dual space $\widehat{G(s,t)}$ 
has a partition $\widehat{G(s,t)}=\Gamma_1\sqcup\Gamma_\ng$ with the following properties: 
\begin{enumerate}[{\rm(a)}]
	\item\label{E14_item1} $\Gamma_1\subseteq \widehat{G(s,t)}$ is an open subset and there is a homeomorphism $\psi_1\colon\RR^\times\to\Gamma_1$. 
	\item\label{E14_item2} There exists a homeomorphism $\psi_\ng\colon\widehat{N}\to\Gamma_\ng$. 
	\item\label{E14_item3} 
	For every subset $A\subseteq\RR^\times$ we have
	\begin{equation*}
		\text{$0\in\RR$ is an accumulation point for $A$}
		\iff \Gamma_\ng\cap \overline{\psi_1(A)}\ne\emptyset \iff \Gamma_\ng\subseteq\overline{\psi_1(A)}.
	\end{equation*}
\end{enumerate}
These properties can be most easily seen in terms of the space of coadjoint orbits $\gg(s,t)^*/G(s,t)$ with its quotient topology, using the Kirillov homeomorphism $\kappa\colon \gg(s,t)^*/G(s,t)\to\widehat{G(s,t)}$. 
In some more detail, denote by $\zg$ the 1-dimensional centre of $\gg(s,t)$, 
and select $0\ne X_0\in\zg$, so $\zg=\RR X_0$.  
The generic coadjoint orbits are the affine hyperplanes 
$$\gg(s,t)^*_r:=\{\xi\in \gg(s,t)^*\mid\langle\xi,X_0\rangle=r\}
\text{ for }r\in\RR^\times,$$ 
and the set of these affine hyperplanes corresponds via $\kappa$ to the above set $\Gamma_1$. 
The complement of the union of these affine hyperplanes is 
$$\gg(s,t)^*_0:=\{\xi\in \gg(s,t)^*\mid\langle\xi,X_0\rangle=0\}
=\{\xi\in \gg(s,t)^*\mid\zg\subseteq\Ker\xi\}=\zg^\perp.$$ 
The set $\gg(s,t)^*_0$  is invariant to the coadjoint action of $G(s,t)$. 
The set $\Gamma_\ng$ corresponds, again via the homeomorphism $\kappa$, to the set of all coadjoint orbits of $G(s,t)$ that are contained in the closed subset $\gg(s,t)^*_0\subseteq\gg(s,t)^*$. 
This directly implies that, if $A\subseteq\RR^\times$ 
then for its corresponding set of affine hyperplanes 
$\gg(s,t)^*_A:=\{\gg(s,t)^*_r\mid r\in A\}\subseteq\gg(s,t)^*/G(s,t)$ 
we have
\begin{align*}
	\text{$0\in\RR$ is an accumulation point for $A$}
	& \iff (\gg(s,t)^*_0/G(s,t))\cap \overline{\gg(s,t)^*_A}\ne\emptyset \\
	&\iff \gg(s,t)^*_0/G(s,t)\subseteq \overline{\gg(s,t)^*_A}.
\end{align*}
Here $\gg(s,t)^*_0/G(s,t)\subseteq\gg(s,t)^*/G(s,t)$ 
is the set of coadjoint orbits of $G$ contained in $\gg(s,t)^*_0$. 
We also recall that, if $G$ is a Lie group and $\Oc,\Oc_1,\Oc_2,\dots\in\gg^*/G$ are mutually different coadjoint orbits of $G$, then $\Oc$ is an accumulation point of the sequence $\{\Oc_n\}_{n\ge 1}$ with respect to the quotient topology of $\gg^*/G$ if and only if there exist $\xi\in\Oc$, subscrips $1\le n_1<n_2<\cdots$ and points $\xi_j\in\Oc_{n_j}$ with $\lim\limits_{j\to\infty}\xi_j=\xi$ in~$\gg^*$. 

Now, taking into account the above properties \eqref{E14_item1}, \eqref{E14_item2}, and \eqref{E14_item3}, it follows by Remark~\ref{L00_rem} and Lemma~\ref{L00} that the unitary dual spaces $\widehat{G(s,t)}$  are pairwise homeomorphic for different values of $(s,t)$. 

See also \cite[Subsect. 7.3]{BB-LMA} for more details on the topology of the unitary dual space~$\widehat{N}$. 
\end{example}

\section{Lie groups with co-compact radical}
\label{sect:cocomp}

This section contains the first of the main steps towards proving Theorem~\ref{dual_main}. 
Specifically, we prove the following fact that essentially shows that 
if the Lie group~$G$ in  Theorem~\ref{dual_main} has co-compact radical, then $G$ may be assumed to be solvable, a case that is taken care of by Proposition~\ref{P13}. 
We note again that in this case the group $G$ is of type~\I. 

\begin{proposition}
	\label{cocomp}
	If $G$ is a 1-connected Lie group with co-compact radical, 
	and  $\widehat{G}$ is homeomorphic to the unitary dual of a Heisenberg group, 
	then the Lie group $G$ is solvable. 
\end{proposition}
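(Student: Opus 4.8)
The plan is to use the Levi decomposition to reduce everything to the triviality of the semisimple part. Write $G=R\rtimes S$ with $R$ the radical and $S$ a $1$-connected semisimple Levi subgroup; since the radical is co-compact we have $S\cong G/R$ compact, so $S$ is a $1$-connected compact semisimple (hence connected) Lie group. It suffices to prove $S=\{1\}$, for then $G=R$ is solvable. Assume $S\ne\{1\}$ and seek a contradiction. I first record that $\widehat{H}$ is connected: by the accumulation property in Lemma~\ref{L0}, $\Gamma_2$ lies in the closure of each of the two connected components $\Gamma_1^{+},\Gamma_1^{-}$ of $\Gamma_1$, so $\widehat{H}=\overline{\Gamma_1^{+}}\cup\overline{\Gamma_1^{-}}$ is a union of two connected sets meeting in $\Gamma_2\ne\emptyset$; hence any space homeomorphic to $\widehat{H}$ is connected. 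Now if $S$ acts trivially on $\rg$, then $\gg=\rg\oplus\sg$ and $G\cong R\times S$, whence $\widehat{G}\cong\widehat{R}\times\widehat{S}$ (both factors being type~\I); since $\widehat{S}$ is discrete with at least two points, $\widehat{G}$ is a disjoint union of at least two nonempty clopen subsets, contradicting connectedness. So we may assume that $S$ acts nontrivially on $\rg$.

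The main step, and the principal obstacle, is to manufacture from this a \emph{quotient} of $G$ of the model form $\Vc\rtimes_\alpha K$ to which Lemma~\ref{modelind} applies, with $K$ a nontrivial compact semisimple group and $\alpha$ nontrivial. Because $S$ is reductive, its action on $\rg$ is completely reducible, and the terms of the lower central series of $\rg$ are $S$-invariant ideals of $\gg$ with abelian successive quotients; since $S$ acts nontrivially on $\rg$ it acts nontrivially on at least one such abelian subquotient. The delicate point is to promote such a subquotient to an honest semidirect-product quotient $Q=\Vc\rtimes_\alpha K$ in which $\Vc$ is the full abelian radical and $K$ a nonzero compact semisimple quotient of $S$ acting nontrivially; I expect this to require quotienting by a suitable $S$-invariant ideal and then using complete reducibility to split off $\Vc$ equivariantly. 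Once $Q$ is realized as a quotient of $G$, the dual $\widehat{Q}$ is (homeomorphic to) a closed subset of $\widehat{G}$, so $\widehat{Q}\subseteq\Gamma_1\sqcup\Gamma_2$.

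With $Q=\Vc\rtimes_\alpha K$ in hand, I would choose $\xi\in\Vc^*$ with $K_\xi\subsetneq K$ (possible since $\alpha$ is nontrivial) and apply Lemma~\ref{modelind}. This yields a continuous injective family $\pi_t:=\Ind_{Q_\xi}^Q(\chi_t)$ of irreducible representations, $t\in\RR$, whose restriction to $K$ is the fixed quasi-regular representation $\lambda:=\Ind_{K_\xi}^K(\tau_0)$ on $L^2(K/K_\xi)$. As $K$ is connected and $K_\xi$ a proper closed subgroup, $K/K_\xi$ is positive-dimensional, so $\lambda$ decomposes (Peter--Weyl) into at least two inequivalent irreducible representations of $K$. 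By Lemma~\ref{modelind}\eqref{modelind_item4}, every limit point of $[\pi_t]$ as $t\to\infty$ is an irreducible representation trivial on $\Vc$, that is, an element of $\widehat{K}$ weakly contained in $\lambda$; by continuity of restriction the family $[\pi_t]$ therefore has at least two distinct limit points as $t\to\infty$, all lying in the image of $\widehat{K}=\widehat{Q/\Vc}$, which is a discrete closed subset of $\widehat{G}$.

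Finally I would extract the contradiction from the Heisenberg partition. The key topological input, coming from the accumulation property in Lemma~\ref{L0}, is that a family in $\Gamma_1\sqcup\Gamma_2$ which converges to two distinct points must converge to every point of $\Gamma_2$: the case ``both limits in $\Gamma_1$'' is excluded because $\Gamma_1$ is Hausdorff, the case ``one limit in each piece'' because a limit in $\Gamma_1$ forces the $\psi_1^{-1}$-values to a nonzero limit while a limit in $\Gamma_2$ forces them to $0$, and the case ``both limits in $\Gamma_2$'' forces the family eventually into $\Gamma_1$ with $\psi_1^{-1}$-values tending to $0$, whence the dichotomy makes it converge to all of $\Gamma_2$. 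Applied to $[\pi_t]$, this shows that the set of its limits contains all of $\Gamma_2\cong\RR^{2n}$; but that set is contained in the discrete subset $\widehat{K}\subseteq\widehat{G}$, forcing $\Gamma_2$ to be discrete and contradicting $\Gamma_2\cong\RR^{2n}$ with $n\ge 1$. This contradiction gives $S=\{1\}$, so $G$ is solvable. The hard part is the reduction of the second paragraph; the deformation and the topological steps are then driven by Lemma~\ref{modelind} and Lemma~\ref{L0}.
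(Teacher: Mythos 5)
Your first and last steps are sound: the trivial-action case via connectedness of $\widehat{H}$, the deformation via Lemma~\ref{modelind}, and the topological contradiction with the Heisenberg partition all work, and the last two are essentially the paper's own technique (the paper packages them as the notion of ``1-dimensional type'', Lemma~\ref{co-L5}, Example~\ref{co-Ex} and Proposition~\ref{co-P}). The problem is the step you yourself flag as the principal obstacle, and it is not merely left unproven: the reduction you hope for is \emph{false} in general. You want, whenever the compact semisimple Levi factor acts nontrivially on the radical $\rg$, a quotient of $\gg$ of the form $\Vc\rtimes_{\alpha}\kg'$ in which the abelian part $\Vc$ is the full radical and the action is nontrivial. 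Any such quotient is $\gg/\mathfrak{j}$ for an ideal $\mathfrak{j}$ whose intersection with $\rg$ contains $[\rg,\rg]$ (otherwise the radical of the quotient is not abelian), so a necessary condition is $\alpha(\kg)\rg\not\subseteq[\rg,\rg]$. This condition can fail: take $\ng:=\CC^2$, $\rg:=\ng\rtimes\RR a$ with $a$ acting as multiplication by $\ie$, and $\kg:=\mathfrak{su}(2)$ acting on $\ng=\CC^2$ in the standard ($\CC$-linear) way and annihilating $a$. One checks that $\alpha(\kg)\subseteq\Der(\rg)$, that the radical of $\gg:=\rg\rtimes_\alpha\kg$ is $\rg$ with abelian nilradical $\ng$ and purely imaginary roots $\pm\ie$, and that $[\rg,\rg]=[\RR a,\ng]=\ng$; hence $\alpha(\kg)\rg\subseteq\ng=[\rg,\rg]$, and \emph{every} quotient of $\gg$ with abelian radical has trivial Levi action. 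So your second paragraph cannot be completed as stated, and you cannot appeal to the hypothesis $\widehat{G}\simeq\widehat{H}$ to exclude this configuration without circularity --- excluding it is part of what must be proved.

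This is exactly why the paper proceeds by a dichotomy rather than a single reduction. After passing to abelian nilradical (Lemma~\ref{co-L3}), Lemma~\ref{co-L4} splits the weight-space decomposition of $\ng$ under $\rg$: either the action of $\kg$ survives on the zero-weight part, in which case a further abelianization (Lemma~\ref{co-L2}) lands in your model $\Vc\rtimes K$; or it survives only on a nonzero-weight space, and the best reachable quotient is of the form $\ng_1\rtimes(\ag\times\kg)$ with $\ng_1$ abelian and $\dim\ag=1$ acting with purely imaginary nonzero roots --- my example above is precisely the minimal instance of this case. That second case requires the separate, harder Lemma~\ref{co-L5}: one induces the characters $\chi_t$ of $N$ extended trivially across $H(\chi_1)$, $H=A\times K$, lets $t\to0$, uses continuity of induction to converge to the quasiregular representation on $L^2(H/H(\chi_1))$, and shows via Lemma~\ref{nonfact} together with the centrality of $A$ in $H$ that this limit is not a factor representation, so the limit set is a non-singleton subset of $\widehat{K}\times\widehat{A}\simeq\ZZ\times\RR$; the contradiction with the Heisenberg partition is then as in your last paragraph. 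Two smaller points: the purely-imaginary-roots (type R) hypothesis, which you never derive but which the weight-space analysis of Lemma~\ref{co-L4} needs, is extracted in the paper from the $T_1$ property of $\widehat{G}$; and in your third paragraph the existence of at least two limit points follows from continuity of \emph{induction} ($\pi_t\to\Ind_{Q_\xi}^Q(\tau_0)$, the pullback of the quasiregular representation of $K$, which contains at least two inequivalent irreducibles), not from continuity of restriction.
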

 
We now begin the preparation for the proof of Proposition~\ref{cocomp} above, 
and the proof itself will be given at the end of this section. 
It is convenient to introduce the following terminology. 

\begin{definition}
\normalfont 
Let $X$ be a topological space. 
A subset $S\subseteq X$ is called a \emph{limit set} if there exists a net $\{x_i\}_{i\in I}$ in $X$ that converges to every point of $S$.  
The topological space $X$ is  said to be of \emph{1-dimensional type} if it 
has a sequence whose limit set (with its relative topology) is 
homeomorphic to a non-singleton subset of~$\ZZ\times\RR$. 

A finite-dimensional real Lie algebra $\hg$, as well as its corresponding 1-connected Lie group $H$, are said to be of \emph{1-dimensional type} if the topological space 
$\widehat{H}$ 
is of 1-dimensional type. 
\end{definition}

\begin{remark}
\label{one-dim}
\normalfont
If a topological space $X$ has a closed subset that is of 1-dimensional type, 
then the space $X$ is of 1-dimensional type. 

This implies that if $\hg$ is a finite-dimensional Lie algebra with an ideal $\ag$ 
such that the quotient $\hg/\ag$ is of 1-dimensional type, then $\hg$  is of 1-dimensional type. 
In fact, let $H$ be a 1-connected Lie group whose Lie algebra is $\hg$. 
If $A=\langle\exp_H(\ag)\rangle$ is the integral subgroup of $H$ corresponding to $\ag$, then $A$ is a 1-connected closed normal subgroup of $H$ by \cite[Ch. II, \S 6, no. 6, Prop. 14]{Bo06}. 
Then the unitary dual of the quotient group $H/A$ is homeomorphic to a closed subset of the unitary dual of $H$. 
Since the Lie group $H/A$ is 1-connected and its Lie algebra is $\hg/\ag$,  the assertion follows. 
\end{remark}

\begin{example}
\label{co-Ex}
\normalfont 
The Lie algebra of the Euclidean group of the plane  $\CC\rtimes \TT$ is of 1-dimensional type, 
as can be seen from \cite[Ex. 2, page 263]{Fe62}. 

On the other hand, the Heisenberg algebras are not of 1-dimensional type. 
More precisely, if $H$ is a Heisenberg group and $F\subseteq \Prim(H)$ is a closed subset, then $F$ is not of 1-dimensional type. 
This assertion is based on Remark~\ref{partition_rem}\eqref{partition_rem_item1} 
and the fact that if $n\ge 2$ then $\RR^n$ is not homeomorphic to any subset of $\ZZ\times\RR$, which in turn follows from the fact that every connected subset of $\ZZ\times\RR$ is homeomorphic to an interval in $\RR$. 
\end{example}

Let $\kg$ be a finite-dimensional real Lie algebra. 
For later use, we recall from \cite[Prop. 12.1.4]{HN12} that  
$\kg$ is a \emph{compact Lie algebra} if and only if there exists a compact connected Lie group whose Lie algebra is isomorphic to $\kg$. 
A finite-dimensional real Lie algebra is compact if and only if it is the direct product of an abelian Lie algebra and a compact semisimple Lie algebra, 
cf. \cite[Lemma 12.1.2]{HN12}.
If $\kg$ is a semisimple Lie algebra, then compactness of $\kg$ is equivalent to the compactness of every connected Lie group whose Lie algebra is isomorphic to $\kg$ by Weyl's theorem \cite[Thm. 12.1.7]{HN12}. 

\begin{lemma}
\label{co-L2}
Let $\kg$ be a 
semisimple Lie algebra, $\ng$ a nilpotent Lie algebra, and $\alpha\colon\kg\to\Der(\ng)$ be a Lie algebra homomorphism. 
Define $\ng_0:=\ng/[\ng,\ng]$ and 
	$$\alpha_0\colon\kg\to\End(\ng_0),\quad \alpha_0(x)(y+[\ng,\ng]):=\alpha(x)y+[\ng,\ng].
$$
Then $\alpha_0$ is a Lie algebra homomorphism and, 
	if 
moreover $\alpha\ne0$ then $\alpha_0\ne 0$. 
\end{lemma}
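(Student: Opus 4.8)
The plan is to regard $\ng$ as a finite-dimensional $\kg$-module via $\alpha$ and to exploit two facts: that $\kg$ acts by \emph{derivations} (so the bracket of $\ng$ is $\kg$-equivariant), and that $\kg$ is semisimple (so Weyl's complete reducibility theorem applies). The well-definedness and homomorphism property of $\alpha_0$ are routine. Since each $\alpha(x)$ is a derivation it preserves $[\ng,\ng]$ (as $\alpha(x)[a,b]=[\alpha(x)a,b]+[a,\alpha(x)b]$), so it descends to an endomorphism $\alpha_0(x)$ of $\ng_0$, and linearity in $x$ is clear. Because the quotient map $\ng\to\ng_0$ intertwines $\alpha(x)$ with $\alpha_0(x)$, passing to the quotient turns the identity $\alpha([x,y])=\alpha(x)\alpha(y)-\alpha(y)\alpha(x)$ into $\alpha_0([x,y])=\alpha_0(x)\alpha_0(y)-\alpha_0(y)\alpha_0(x)$, so $\alpha_0$ is a homomorphism into $\End(\ng_0)$ equipped with the commutator bracket.

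For the nontriviality claim I would argue the contrapositive: assume $\alpha_0=0$ and deduce $\alpha=0$. Consider the lower central series $\ng=\ng^1\supseteq\ng^2=[\ng,\ng]\supseteq\cdots\supseteq\ng^{c+1}=0$; each term is characteristic, hence $\alpha(\kg)$-invariant, so the $\ng^i$ form a chain of $\kg$-submodules with $\ng_0=\ng^1/\ng^2$. The decisive point is that the derivation property makes the bracket $\ng\otimes\ng\to\ng$ a morphism of $\kg$-modules; consequently the iterated left-nested bracket induces a $\kg$-equivariant surjection $\ng_0^{\otimes i}\twoheadrightarrow\ng^i/\ng^{i+1}$ for every $i$. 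It is surjective because $\ng$ is nilpotent, so $\ng^i$ is spanned by $i$-fold brackets, and it factors through $\ng_0^{\otimes i}$ by a weight count: any nested bracket having a factor in $\ng^2$ lands in $\ng^{i+1}$.

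If $\alpha_0=0$ then $\kg$ acts trivially on $\ng_0$, hence trivially on each tensor power $\ng_0^{\otimes i}$, hence trivially on every quotient $\ng^i/\ng^{i+1}$. It then remains to upgrade ``trivial on every graded piece'' to ``trivial on $\ng$''. Here semisimplicity is essential: by Weyl's complete reducibility theorem the $\kg$-module $\ng$ is a direct sum of its composition factors, all of which occur among the composition factors of the graded pieces $\ng^i/\ng^{i+1}$ and are therefore trivial; thus $\ng$ is a trivial $\kg$-module, i.e. $\alpha=0$. Equivalently one could split the successive extensions $0\to\ng^{i+1}\to\ng^i\to\ng^i/\ng^{i+1}\to0$ using Whitehead's first lemma $H^1(\kg,-)=0$.

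The step I expect to require the most care is establishing the $\kg$-equivariant surjection $\ng_0^{\otimes i}\twoheadrightarrow\ng^i/\ng^{i+1}$, since this is where both hypotheses genuinely enter—nilpotency of $\ng$ guarantees surjectivity and the factorization through $\ng_0^{\otimes i}$, while semisimplicity of $\kg$ is what lets triviality on the associated graded propagate back to $\ng$ itself. Once the bracket is recognized as a module map, every remaining verification is formal.
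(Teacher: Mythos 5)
Your proof is correct, but it takes a genuinely different route from the paper's. The paper argues directly, not by contraposition: by Weyl's complete reducibility theorem there is a linear complement $\Dc$ with $\ng=[\ng,\ng]\dotplus\Dc$ and $\alpha(\kg)\Dc\subseteq\Dc$; since $\ng$ is nilpotent, $\Dc$ generates $\ng$ as a Lie algebra (a Bourbaki exercise), so a nonzero $\alpha$ --- acting by derivations --- cannot vanish on $\Dc$, and then $0\neq\alpha(\kg)\Dc\subseteq\Dc$ together with $\Dc\cap[\ng,\ng]=\{0\}$ immediately gives $\alpha(\kg)\ng\not\subseteq[\ng,\ng]$, i.e.\ $\alpha_0\neq0$. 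You instead run the contrapositive through the whole lower central series: the derivation property makes the bracket a $\kg$-module map, the nested brackets give equivariant surjections $\ng_0^{\otimes i}\twoheadrightarrow\ng^i/\ng^{i+1}$ (your ``weight count'' is the standard inclusion $[\ng^a,\ng^b]\subseteq\ng^{a+b}$, which is fine), so $\alpha_0=0$ forces triviality on every graded piece, and Weyl (or Whitehead's first lemma) then reassembles this into triviality on $\ng$ itself. The ingredients are the same --- semisimplicity via complete reducibility, the derivation property, nilpotency --- but they enter at different places: the paper spends Weyl at the top of the filtration to produce an invariant complement and delegates the rest to the generation fact, while you spend the nilpotency on the associated graded and Weyl only at the very end to split the filtration. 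The paper's argument is shorter and produces an explicit witness for $\alpha_0\neq0$; yours is longer but more self-contained, since your nested-bracket spanning argument effectively reproves the generation fact that the paper cites, and it isolates cleanly the module-theoretic content (all composition factors of $\ng$ are trivial $\Rightarrow$ $\ng$ is trivial) that makes semisimplicity indispensable.
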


\begin{proof}
Since $\kg$ is a semisimple Lie algebra and $[\ng,\ng]$ is invariant to $\alpha(\kg)$, 
there exists a linear subspace $\Dc\subseteq\ng$ satisfying 
\begin{equation}
\label{co-L2_proof_eq1}
\ng=[\ng,\ng]\dotplus\Dc\text{ and }\alpha(\kg)\Dc\subseteq\Dc.
\end{equation}
(See for instance \cite[\S 6, no. 2, Thm. 2]{Bo72}.)
The nilpotent Lie algebra $\ng$ is generated by $\Dc$; see e.g., \cite[\S 4, Exerc. 4, page 119]{Bo72}. 
By the hypothesis $\alpha\ne0$, that is, $\alpha(\kg)\ng\ne0$, we then obtain 
$\alpha(\kg)\Dc\ne0$. 
By \eqref{co-L2_proof_eq1}, it follows that $\alpha(\kg)\ng\not\subseteq[\ng,\ng]$, 
which is equivalent to  $\alpha_0\ne0$. 
\end{proof}

The \emph{nilradical} of a finite-dimensional real Lie algebra $\gg$ 
is its largest nilpotent ideal, cf. \cite[Ch. 1, \S 5, no. 4]{Bo72}. 
Thus, the nilradical may be strictly larger than the nilpotent radical, 
which is equal to $[\gg,\gg]$ when $\gg$ is solvable, 
by \cite[Ch. 1, \S 5, no. 3, Cor. 1]{Bo72}.

	If $G$ is a 1-connected Lie group, the radical $\rg$ of $\gg$ is the greatest solvable ideal of $\gg$. 
	Its corresponding integral subgroup $R=\langle\exp\rg\rangle$  is a closed 1-connected subgroup of $G$ (as in Remark~\ref{one-dim}), 
	called the \emph{radical of $G$}.

\begin{lemma}
	\label{co-L3}
	Let $\kg$ be a 
	semisimple 
	Lie algebra, $\rg$ a solvable Lie algebra with its nilradical $\ng$, and $\alpha\colon\kg\to\Der(\rg)$ be a Lie algebra homomorphism. 
	We define $\rg_0:=\rg/[\ng,\ng]$,  $\ng_0:=\ng/[\ng,\ng]$, and 
	$$\alpha_0\colon\kg\to\End(\rg_0),\quad \alpha(x)(y+[\ng,\ng]):=\alpha(x)y+[\ng,\ng].
	$$
	Then the following assertions hold:
	\begin{enumerate}[{\rm(i)}]
		\item\label{co-L3_item1} 
		The Lie algebra $\rg_0$ is metabelian and its nilradical is $\ng_0$. 
		\item\label{co-L3_item2} 
		We have $[\ng_0,\ng_0]=\{0\}$. 
		\item\label{co-L3_item3} 
		If $\alpha\ne0$, then $\alpha_0\ne0$. 
	\end{enumerate}
\end{lemma}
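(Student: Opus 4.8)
The plan is to establish the three assertions in the order \eqref{co-L3_item2}, \eqref{co-L3_item1}, \eqref{co-L3_item3}, since the argument for \eqref{co-L3_item1} uses the abelianness coming from \eqref{co-L3_item2}, while \eqref{co-L3_item3} rests on Lemma~\ref{co-L2}. First I would record that, as the nilradical $\ng$ is a characteristic ideal of $\rg$, so is $[\ng,\ng]$; hence $\rg_0$ and $\ng_0$ are genuine Lie algebras, and since every $\alpha(x)\in\Der(\rg)$ preserves $\ng$ and therefore $[\ng,\ng]$, the map $\alpha_0$ descends to a well-defined homomorphism $\kg\to\Der(\rg_0)$. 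Assertion \eqref{co-L3_item2} is then immediate: for $a,b\in\ng$ the bracket of their images in $\ng_0=\ng/[\ng,\ng]$ is the image of $[a,b]\in[\ng,\ng]$, hence $[\ng_0,\ng_0]=\{0\}$.

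For assertion \eqref{co-L3_item1} I would first use Lie's theorem to see that the derived algebra $[\rg,\rg]$ of the solvable algebra $\rg$ is nilpotent; being an ideal it is therefore contained in the nilradical $\ng$, so $\rg/\ng$ is abelian and $[\rg_0,\rg_0]$, being the image of $[\rg,\rg]$, is contained in $\ng_0$. As $\ng_0$ is abelian by \eqref{co-L3_item2}, this makes $[\rg_0,\rg_0]$ abelian, so $\rg_0$ is metabelian. It remains to identify the nilradical of $\rg_0$. One inclusion is clear since $\ng_0$ is an abelian, hence nilpotent, ideal. For the reverse inclusion I would let $\mg\subseteq\rg$ be the preimage of the nilradical of $\rg_0$ and prove that $\mg$ is nilpotent; then $\mg\subseteq\ng$ and the nilradical $\mg/[\ng,\ng]$ of $\rg_0$ is contained in $\ng_0$, giving equality.

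The technical heart is the following claim: \emph{if a derivation $D$ of the nilpotent Lie algebra $\ng$ acts nilpotently on $\ng/[\ng,\ng]$, then $D$ acts nilpotently on $\ng$.} I would prove this by filtering $\ng$ along its lower central series $C^i\ng$ and noting that each quotient $C^i\ng/C^{i+1}\ng$ is a $D$-equivariant quotient of the $i$-fold tensor power of $\ng/[\ng,\ng]$, on which $D$ acts nilpotently by the Leibniz rule; as the filtration is finite, $D$ is nilpotent on $\ng$. Granting the claim, for $x\in\mg$ the image $\bar x$ lies in the nilpotent Lie algebra $\mg/[\ng,\ng]$, so by Engel's theorem $\ad(\bar x)$ is nilpotent and in particular acts nilpotently on the ideal $\ng_0$; the claim then gives that $\ad(x)$ acts nilpotently on $\ng$. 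Combined with $\ad_{\mg}(x)^k(\mg)\subseteq[\ng,\ng]$ for some $k$ (again because $\mg/[\ng,\ng]$ is nilpotent), this shows $\ad_{\mg}(x)$ is nilpotent, and Engel's theorem yields that $\mg$ is nilpotent.

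For assertion \eqref{co-L3_item3} I would argue by contraposition. Assuming $\alpha_0=0$, that is $\alpha(\kg)\rg\subseteq[\ng,\ng]$, the restriction $\alpha|_\ng\colon\kg\to\Der(\ng)$ induces the zero map on $\ng/[\ng,\ng]$, so Lemma~\ref{co-L2} forces $\alpha|_\ng=0$, i.e. $\alpha(\kg)\ng=\{0\}$. Then for $x,y\in\kg$ and $v\in\rg$ we have $\alpha(x)\alpha(y)v\in\alpha(x)[\ng,\ng]\subseteq\alpha(x)\ng=\{0\}$, whence $\alpha([x,y])=[\alpha(x),\alpha(y)]=0$; since $\kg$ is semisimple, $\kg=[\kg,\kg]$, so $\alpha=0$. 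I expect the genuine obstacle to lie in the reverse inclusion of \eqref{co-L3_item1}, precisely in the displayed claim: one must lift nilpotency of $\ad(x)$ from the abelianization $\ng/[\ng,\ng]$ back to $\ng$, which is exactly what prevents $\mg$ from being a mere, possibly non-nilpotent, extension of a nilpotent algebra by a nilpotent one.
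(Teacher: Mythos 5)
Your proof is correct, and it follows the paper's overall skeleton (reduce the nilradical identification to a nilpotency statement about the preimage of a nilpotent ideal; use Lemma~\ref{co-L2} for the last assertion), but it diverges at two points. For assertion~\eqref{co-L3_item1}, the paper, after the same reduction, simply invokes P.~Hall's criterion for Lie algebras (an ideal $\ng\subseteq\mg$ with $\ng$ nilpotent and $\mg/[\ng,\ng]$ nilpotent forces $\mg$ nilpotent, cited from \cite[Rem.\ 3.3]{BB_RIMS}), working with an arbitrary nilpotent ideal $\ag_0$ and the ideal $\ag+\ng$ rather than with the full nilradical as you do; your displayed claim --- a derivation of $\ng$ that is nilpotent on $\ng/[\ng,\ng]$ is nilpotent on $\ng$, proved via the lower-central-series filtration and tensor powers --- combined with the Engel argument is in effect an inline proof of that criterion, so your version is self-contained at the cost of length. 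For assertion~\eqref{co-L3_item3}, the paper argues directly: by Bourbaki, every derivation of the solvable algebra $\rg$ maps $\rg$ into $\ng$; a $\kg$-invariant complement $\Ec$ of $\ng$ (complete reducibility) is then annihilated by $\alpha(\kg)$, so $\alpha\ne0$ forces $\alpha(\kg)\ng\ne0$, and Lemma~\ref{co-L2} concludes. Your contrapositive route --- $\alpha_0=0$ gives $\alpha(\kg)\rg\subseteq[\ng,\ng]$, Lemma~\ref{co-L2} kills $\alpha\vert_\ng$, and then $[\alpha(x),\alpha(y)]=0$ together with perfectness $\kg=[\kg,\kg]$ kills $\alpha$ --- is arguably more elementary, since it avoids both the Bourbaki fact and the invariant complement. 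What the paper's phrasing buys is the explicit intermediate implication $\alpha\ne0\Rightarrow\alpha(\kg)\ng\ne0$, which is reused verbatim in the proofs of Lemmas~\ref{co-L4} and~\ref{co-L5}; your argument, being a pure contraposition, does not isolate that statement, though it could be extracted with a little extra work.
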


\begin{proof}
\eqref{co-L3_item1}--\eqref{co-L3_item2}
The equality  $[\ng_0,\ng_0]=\{0\}$ is clear. 
It is noted without proof in \cite[proof of Lemma 23, (A)(a)(1), page 43]{Pu78} 
that $\ng_0$ is the nilradical of $\rg_0$. 
For completeness, let us provide a proof of that fact. Specifically, we must show that if $\ag_0\subseteq\rg_0$ is a nilpotent ideal, then $\ag_0\subseteq\ng_0$. 
Since $\ag_0\subseteq\rg_0=\rg/[\ng,\ng]$ is an ideal, there exists an ideal $\ag\subseteq\rg$ with $[\ng,\ng]\subseteq\ag$ and $\ag_0=\ag/[\ng,\ng]$, 
and it remains to prove that $\ag\subseteq\ng$.  
To this end we note that $\ag+\ng$ is an ideal of $\rg$ with $\ng\subseteq\ag+\ng$. 
Moreover, $(\ag+\ng)/[\ng,\ng]=\ag_0+\ng_0$. 
Since both $\ag_0$ and $\ng_0$ are nilpotent ideals of $\rg_0$, it follows that $\ag_0+\ng_0$ is in turn a  nilpotent ideal (contained in the nilradical of $\rg_0$).  
Thus $\ng$ is a nilpotent ideal of the Lie algebra $\ag+\ng$ such that $(\ag+\ng)/[\ng, \ng]$ is nilpotent. 
Then, by Hall's criterion for Lie algebras 
\cite[Rem. 3.3]{BB_RIMS}, we obtain that $\ag+\ng$ is nilpotent. 
Since $\ag+\ng$ is an ideal of $\rg$, it follows that it is contained in the nilradical $\ng$, that is, $\ag\subseteq\ng$, which completes the proof of the fact that $\ng_0$ is the nilradical of $\rg_0$.

Since the Lie algebra $\rg$ is solvable, we have $[\rg,\rg]\subseteq\ng$, hence $[\rg_0,\rg_0]\subseteq\ng_0$, and this implies that the Lie algebra $\rg_0$ is metabelian.

\eqref{co-L3_item3}
Since the Lie algebra $\rg$ is solvable, for every $\delta\in\Der(\gg)$ we have $\delta(\rg)\subseteq\ng$; 
see e.g., \cite[\S 5, no. 5, Prop. 6]{Bo72}. 
In particular, $\alpha(\kg)\ng\subseteq\ng$. 
Since moreover the Lie algebra $\kg$ is semisimple, there exists a linear subspace $\Ec\subseteq\rg$ satisfying 
$$\rg=\ng\dotplus\Ec
\text{ and }\alpha(\kg)\Ec\subseteq\Ec$$
as in the proof of Lemma~\ref{co-L2}. 
Since $\alpha(\kg)\rg\subseteq\ng$, we must have $\alpha(\kg)\Ec\subseteq\Ec\cap\ng =\{0\}$. 
Then, by the hypothesis $\alpha\ne0$, that is, $\alpha(\kg)\rg\ne0$, 
it follows that  $\alpha(\kg)\ng\ne0$. 
By Lemma~\ref{co-L2}, 
we then obtain~$\alpha_0\ne0$.   
\end{proof}

The following terminology is needed in Lemma~\ref{co-L4}\eqref{co-L4_item2} below: 
a finite-dimensional real Lie algebra $\hg$ is said to be \emph{of type~\I} if 
a 1-connected Lie group whose Lie algebra is isomorphic to $\hg$ is type~\I.  
A class of solvable Lie algebras of type~\I, containing all nilpotent Lie algebras, 
is provided by \cite[Thm.]{Di62}, where it is also mentioned that a solvable Lie algebra $\hg$ may not be type~\I\ in the above sense if it is isomorphic to the Lie algebra of some connected Lie group of type~\I that is not simply connected. 

\begin{lemma}
	\label{co-L4} 
Let $\kg$ be a compact semisimple Lie algebra, $\rg$ a solvable Lie algebra with abelian nilradical $\ng$, and $\alpha\colon \kg \to \Der(\rg)$ be a Lie algebra homomorphism with $\alpha \ne 0$. 
	Define $\gg:=\rg\rtimes_\alpha\kg$. 
		If all the roots of $\rg$ are purely imaginary, 
	then at least one of the following assertions holds:
	\begin{enumerate}[{\rm(i)}]
		\item\label{co-L4_item1} 
		There exist two ideals $\ng^0,\ng^1$ of $\gg$ satisfying 
		$\ng=\ng^0\dotplus\ng^1$, $\alpha(\kg)\ng^0\ne0$, and $\rg/\ng^1$ is a nilpotent Lie algebra. 
		\item\label{co-L4_item2} 
		There exists an ideal $\ag_1$ of $\gg$ with $\ag_1\subseteq\rg$ 
		such that, if we define $\rg_1:=\rg/\ag_1$ and $\ng_1$ is the nilradical of $\rg_1$, then $\rg_1$ is a metabelian Lie algebra of type~\I, $\dim(\rg_1/\ng_1)=1$, and $\alpha(\kg)\rg\not\subseteq\ag_1$. 
	\end{enumerate}
\end{lemma}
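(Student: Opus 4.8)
The plan is to extract, once and for all, the canonical splitting $\ng=\ng^0\dotplus\ng^1$, and then to branch according to whether $\rg/\ng^1$ is nilpotent. First I would record the structural facts. Since $\rg$ is solvable, every derivation of $\rg$ carries $\rg$ into its nilradical $\ng$ by \cite[\S 5, no. 5, Prop. 6]{Bo72}, so $\alpha(\kg)\rg\subseteq\ng$; choosing a $\kg$-invariant complement of $\ng$ in $\rg$ exactly as in the proof of Lemma~\ref{co-L3} shows $\alpha(\kg)$ annihilates that complement, whence $\alpha(\kg)\ng=\alpha(\kg)\rg\ne0$. The key observation is that $\alpha(x)|_\ng$ and $\ad_r|_\ng$ commute for all $x\in\kg$, $r\in\rg$: by the Jacobi identity $\alpha(x)(\ad_r v)-\ad_r(\alpha(x)v)=[\alpha(x)r,v]$ for $v\in\ng$, and this vanishes since $\alpha(x)r\in\ng$ and $\ng$ is abelian. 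Thus each $\ad_r|_\ng$ is a morphism of $\kg$-modules and preserves the isotypic decomposition of $\ng$; taking $\ng^1:=\ng^\kg$ and $\ng^0$ the sum of the nontrivial isotypic components, both are simultaneously $\ad_\rg$- and $\alpha(\kg)$-invariant, hence ideals of $\gg$, with $\ng=\ng^0\dotplus\ng^1$. As $\kg$ is semisimple and $\ng^0$ has no trivial summand, complete reducibility gives $\alpha(\kg)\ng^0=\ng^0$, and with $\alpha(\kg)\ng^1=0$ we obtain $\alpha(\kg)\rg=\ng^0\ne0$.

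Now the dichotomy. The image of $\ng$ is the nilradical $\ng^0$ of $\rg/\ng^1$, and since $\rg/\ng$ is abelian and acts on the abelian $\ng^0$, the quotient $\rg/\ng^1$ is nilpotent precisely when every root of $\rg$ supported on $\ng^0$ vanishes. If $\rg/\ng^1$ is nilpotent I am done: the pair $\ng^0,\ng^1$ already witnesses assertion~(i), since $\alpha(\kg)\ng^0\ne0$. So assume $\rg/\ng^1$ is not nilpotent; then, by the type~R hypothesis, there is a nonzero purely imaginary root $\lambda$ whose weight space meets $\ng^0$, and the goal becomes to produce the ideal $\ag_1$ of assertion~(ii).

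For this I would first pass to a semisimple toral action. Working with a $\kg$-invariant complement $\sg$ of $\ng$ in $\rg$, the family $\{\ad_s|_\ng\}_{s\in\sg}$ is commuting (because $[s_1,s_2]\in\ng$) and commutes with $\alpha(\kg)$; its simultaneous Jordan decomposition has nilpotent parts that commute with $\alpha(\kg)$, so the sum $\ng^{\mathrm{nil}}$ of their images is an $\ad_\rg$- and $\alpha(\kg)$-invariant subspace of $\ng$, hence an ideal of $\gg$, modulo which the toral action is semisimple with the same roots, $\lambda$ still surviving on the image of $\ng^0$. Correcting the lift $\sg$ by a coboundary so that $[\sg,\sg]$ acquires no nonzero-weight component (possible since $H^2$ of the abelian $\rg/\ng$ with values in a nonzero weight module vanishes), I can then set
$$\ag_1:=\ng^{\mathrm{nil}}\oplus\sg_0\oplus(\ng\ominus W),\qquad W:=(\text{the }\pm\lambda\text{-weight space})\cap\ng^0,$$
where $\sg_0$ is the lift of $\ker\lambda$ and $\ng\ominus W$ is the sum of all weight spaces other than $W$. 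Using $\lambda|_{\sg_0}=0$, the preservation of weight spaces, and $[\sg,\sg]\subseteq\ng^{\mathrm{nil}}\oplus\ng_{[0]}$, one checks that $\ag_1$ is an ideal of $\gg$ contained in $\rg$ and that $\rg_1:=\rg/\ag_1\cong W\rtimes\RR$ with $\RR$ acting by a nonzero rotation; this is abelian-by-one-dimensional, hence metabelian with nilradical $\ng_1=W$ and $\dim(\rg_1/\ng_1)=1$, and it is of type~\I\ by \cite{Di62}. Finally $\alpha(\kg)\rg=\ng^0\not\subseteq\ag_1$, because $W\subseteq\ng^0$ is a nonzero $\kg$-submodule disjoint from $\ag_1\cap\ng=\ng^{\mathrm{nil}}\oplus(\ng\ominus W)$.

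The first paragraph is routine; the main obstacle is the construction of $\ag_1$ in the non-nilpotent case. The delicate point is to arrange three things at once while staying compatible with both the $\ad_\rg$- and the $\alpha(\kg)$-action: (a) quotient out the nilpotent Jordan parts so that the residual toral generator acts by an honest rotation, (b) trivialize the nonzero-weight part of the $[\sg,\sg]$-cocycle so that $\ag_1$ is genuinely a subalgebra and hence an ideal, and (c) identify the surviving $W\rtimes\RR$ with a member of Dixmier's type~\I\ class, all the while keeping $W\ne0$ inside $\ng^0$ so that the $\kg$-action is not lost in the quotient.
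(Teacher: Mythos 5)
Your overall strategy is viable and genuinely different from the paper's in its second case, but it has a real gap exactly at the point you flag, and the difficulty is self-inflicted. You and the paper start from the same two structural facts --- $\alpha(\kg)\rg\subseteq\ng$ and the commutation of $\alpha(\kg)\vert_\ng$ with $\ad_\rg\vert_\ng$ (your Jacobi computation; the paper gets the resulting invariances from Bourbaki) --- but then choose different decompositions. You split $\ng$ into its trivial and nontrivial $\kg$-isotypic parts and branch on whether $\rg/\ng^\kg$ is nilpotent; the paper splits $\ng$ into the generalized weight spaces of the commuting family $\ad_\rg\vert_\ng$, namely the generalized null space $\ng^0$ plus the nonzero weight spaces $\ng^\lambda$, and branches on whether $\alpha(\kg)$ acts nontrivially on $\ng^0$ (case (i)) or on some $\ng^{\lambda_1}$ with $\lambda_1\ne0$ (case (ii)). The payoff of the paper's choice is that case (ii) requires no construction at all: each $\ng^\lambda$ is already an ideal of $\gg$, so $\ag_1:=\ng^0\oplus\bigoplus_{\lambda\in\Lambda_1\setminus\{\lambda_1\}}\ng^\lambda$ is an ideal of $\gg$ contained in $\ng$; the quotient $\rg_1$ is metabelian with unique nonzero root $\ie\widetilde{\lambda_1}$, is of type~\I\ by \cite{Di62}, and $\dim(\rg_1/\ng_1)=1$ is immediate because the nilradical equals the intersection of the kernels of the roots. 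There is no need to force the quotient into the literal shape $W\rtimes\RR$; wanting that shape is what creates all three of your ``delicate points'' (Jordan semisimplification, cocycle correction, killing $\sg_0$).

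Within your route, the genuine gap is the assertion that, modulo $\ng^{\mathrm{nil}}=\sum_s\Im N_s$, the weight $\lambda$ ``still survives'' with nontrivial $\kg$-action, i.e. $W\not\subseteq\ng^{\mathrm{nil}}$. This is exactly what keeps $\alpha(\kg)\rg\not\subseteq\ag_1$ at the end, and it is not automatic: a priori the nontrivial isotypic part of $\ng^\lambda$ could be swallowed by the images of the nilpotent Jordan parts. It is true, but needs an argument: each $N_s$ commutes with the semisimple parts and with $\alpha(\kg)$, hence $\Im N_s$ decomposes along the joint (weight)$\times$(isotypic) decomposition, and on each piece the operators $N_s$ restrict to a commuting family of nilpotent operators, whose images sum to a proper subspace (simultaneous strict triangularization by Engel). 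By contrast, your delicate point (b) is vacuous: since $\alpha(\kg)\sg=0$, one has $\alpha(x)[s_1,s_2]=[\alpha(x)s_1,s_2]+[s_1,\alpha(x)s_2]=0$, so $[\sg,\sg]\subseteq\ng^\kg$, which has no component in $W$ and already lies in $\ag_1$; no cocycle correction is needed (and if you perform one anyway, the same observation lets you choose the coboundary with values in $\ng^\kg$, so compatibility with $\alpha(\kg)$ is free). With these repairs your proof closes, but note that an honest filling of the survival gap is already longer than the paper's entire case (ii).
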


\begin{proof}
Since $\ng$ is the nilradical of $\rg$ and $[\ng,\ng]=0$, 
it follows that the adjoint representation $\rho\colon \rg\to \End(\ng)$, 
$\rho(x):=(\ad_\rg x)\vert_\ng$,  
satisfies $[\rg,\rg]\subseteq\ng\subseteq\Ker(\ad)$. 
Therefore, $\rho(\rg)$ is an abelian Lie subalgebra of $\End(\ng)$ and then we have the weight space decomposition as in \cite[\S 2.2]{BB-JFAA} 
(see also \cite{ArCu24}), given by
\begin{equation}
\label{co-L4_proof_eq1}
\ng=\ng^0\oplus\ng^1\text{ and }\ng^1=\bigoplus_{\lambda\in\Lambda_1}\ng^\lambda
\end{equation}
for a finite set of weights $\Lambda_1\subseteq\rg^*$, 
$$\ng^0:=\{y\in\ng\mid (\forall x\in\rg)\quad (\ad_\rg x)^{\dim\ng}y=0 \},$$
while $\ng^1$ is a linear subspace of $\ng$ whose structure of $\RR$-linear space 
extends to a structure of $\CC$-linear subspace satisfying $\rho(x)\vert_{\ng^1}\in\End_\CC(\ng^1)$ and 
$$(\forall \lambda\in\Lambda_1)\quad 
\ng^\lambda:=\{y\in\ng\mid (\forall x\in\rg)\quad 
(\rho(x)-\ie\lambda(x)\1)^{\dim\ng}y=0 \}.$$
On the other hand, as in the proof of Lemma~\ref{co-L3}\eqref{co-L3_item3}, the hypothesis $\alpha\ne0$ implies $\alpha(\kg)\ng\ne0$. 
Since $\alpha(\kg)\ng\subseteq\ng$, we also have 
\begin{equation*}
	(\forall\lambda\in\Lambda_1\cup\{0\})\quad \alpha(\kg)\ng^\lambda\subseteq\ng^\lambda 
\end{equation*}
by \cite[Ch. 1, \S 3, no. 1, Prop. 1]{Bo72}. 

For every $\lambda\in\Lambda_1\cup\{0\}$ we also have $\rho(\rg)\ng^\lambda\subseteq\ng^\lambda$, 
hence $\ng^\lambda$ is an ideal of $\rg\rtimes_\alpha\kg=\gg$. 
It then follows by \eqref{co-L4_proof_eq1} 
that we must be in one of the following two cases: 

\eqref{co-L4_item1}  $\alpha(\kg)\ng^0\ne\{0\}$. 
We note that $\rg/\ng^1$ is a nilpotent Lie algebra since it is a solvable Lie algebra with no nonzero roots. 

\eqref{co-L4_item2}  $\alpha(\kg)\ng^0=\{0\}$ and there exists $\lambda_1\in\Lambda_1$ with $\alpha(\kg)\ng^{\lambda_1}\ne\{0\}$. 
We then define 
$$\ag_1:=\ng^0\oplus\bigoplus_{\lambda\in\Lambda_1\setminus\{\lambda_1\}}\ng^\lambda.$$
Then the Lie algebra $\rg_1=\rg/\ag_1$ is metabelian since $\rg$ is. 
Moreover, $\rg_1$ has only one nonzero root, namely $\ie\widetilde{\lambda_1}$, 
where we have defined $\widetilde{\lambda_1}\colon\rg_1\to\RR$, $x+\ag_1\mapsto \ie\lambda_1(x)$. 
It then directly follows by \cite[Thm.]{Di62} that the solvable Lie algebra $\rg_1$ is type~\I. 
On the other hand, since the nilradical of a solvable Lie algebra is equal to the intersection of the kernels of all roots, 
we obtain $\ng_1=\Ker\widetilde{\lambda_1}$, and in particular $\dim(\rg_1/\ng_1)=1$. 
Finally, the assumption $\alpha(\kg)\ng^{\lambda_1}\ne\{0\}$ 
implies $\alpha(\kg)\rg\not\subseteq\ag_1$. 
\end{proof}

\begin{lemma}
	\label{co-L5}
	Let $\kg$ be a compact semisimple Lie algebra, $\rg$ a solvable Lie algebra with 
	abelian nilradical $\ng$, and 
	$\alpha\colon\kg\to\Der(\rg)$ be a Lie algebra homomorphism 
	with $\alpha\ne0$. 
	If  the roots of $\rg$ are purely imaginary 
	and $\dim(\rg/\ng)=1$, 
	then the Lie algebra $\rg\rtimes_\alpha\kg$ is of 1-dimensional type. 
\end{lemma}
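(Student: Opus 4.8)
The plan is to put $\gg$ in normal form, produce a degenerating family of irreducible representations, and show that this family converges to at least two different representations of a quotient whose dual is (a closed subspace homeomorphic to) a subset of $\ZZ\times\RR$. First I would normalize the semidirect product. Since $\rg$ is solvable and $\alpha(\kg)\subseteq\Der(\rg)$, every $\alpha(x)$ maps $\rg$ into its nilradical $\ng$, so $\kg$ acts trivially on $\rg/\ng\cong\RR$; choosing a $\kg$-invariant complement $\RR t$ to $\ng$ in $\rg$ (complete reducibility, $\kg$ semisimple) gives $[\kg,t]=0$, whence $\gg=\ng\rtimes(\RR t\oplus\kg)$ with $\RR t\oplus\kg\cong\RR\times\kg$ a subalgebra. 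Writing $D:=(\ad t)|_\ng$, the Jacobi identity and $[\kg,t]=0$ show $D$ commutes with $\alpha(\kg)|_\ng$, while $\alpha\ne0$ forces $\alpha(\kg)\ng\ne0$ (as in the proof of Lemma~\ref{co-L3}\eqref{co-L3_item3}). At the group level this gives $G=N\rtimes(\RR\times K)$ with $N=\exp\ng$ abelian, $K$ compact (the $1$-connected group of the compact semisimple $\kg$), and two commuting actions on $\ng$: the $\RR$-action $\ee^{sD}$, which has relatively compact range because all roots of $\rg$ are purely imaginary, and the nontrivial $K$-action.

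Next I would reduce the module $\ng$. Since $D$ commutes with $K$, the $D$-weight decomposition of $\ng$ is $K$-invariant and each weight space is an ideal of $\gg$; passing to the corresponding quotient (and invoking Remark~\ref{one-dim}) I may assume that $\ng$ carries a single weight, i.e. either (A) $\RR$ acts trivially and $K$ nontrivially, so $\gg\cong(\ng\rtimes\kg)\times\RR$, or (B) $\ng$ has a complex structure $J$ commuting with $K$ and $D=aJ$ with $a\ne0$, so $\RR$ acts by the unit scalars $\ee^{\ie sa}\in\TT$ while $K$ acts $\CC$-linearly and nontrivially. In either case $\RR\times K$ preserves an inner product, and I can fix $\xi\in\ng^*$ whose $K$-orbit is nontrivial and whose full $(\RR\times K)$-orbit $\Oc:=(\RR\times K)\xi$ is compact (equal to $K\xi$ in case (A), to $\TT\cdot K\xi$ in case (B)).

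Then I would deform. For $t\in\RR$ set $\xi_t:=\ee^{-t}\xi$; the orbit $\ee^{-t}\Oc$ is compact with $t$-independent stabilizer, so by Mackey's theory the representation $\pi_t$ induced from the datum at $\xi_t$ is irreducible, i.e. $[\pi_t]\in\widehat{G}$, realized on $L^2(\ee^{-t}\Oc)$. As $t\to\infty$ one has $\ee^{-t}\Oc\to\{0\}$ in $\Cl(\ng^*)$, so precisely as in Lemma~\ref{modelind}\eqref{modelind_item4} the restriction $\pi_t|_N$ becomes weakly equivalent to the trivial character of $N$; hence every limit point of the sequence $\{[\pi_t]\}_{t\to\infty}$ is trivial on $N$ and therefore lies in the closed subspace $\widehat{G/N}=\widehat{\RR\times K}=\RR\times\widehat{K}$ of $\widehat{G}$. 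This gives the first half: the entire limit set $L$ of $\{[\pi_t]\}$ is contained in $\RR\times\widehat{K}$.

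It remains to see that $L$ is non-singleton. On $L^2(\ee^{-t}\Oc)$ the characters of $N$ tend uniformly to $1$ on the shrinking orbit, so for each fixed group element the matrix coefficients of $\pi_t$ converge to those of the quasi-regular representation $\Lambda:=\Ind_{(\RR\times K)_\xi}^{\RR\times K}(1)$ on $L^2(\Oc)$, whose isomorphism class does not depend on $t$, regarded as a representation of $G$ trivial on $N$; consequently $\Sp(\Lambda)\subseteq L$. Because the $K$-orbit of $\xi$ is nontrivial, $\Lambda|_K$ weakly contains the quasi-regular representation $\Ind_{K_\xi}^K(1)$ of $K$ on $L^2(K\xi)$, which, as $K_\xi\subsetneq K$ and $K/K_\xi$ is a positive-dimensional homogeneous space, decomposes with at least two distinct irreducible $K$-types; since restriction preserves weak containment and $\widehat{K}$ is discrete, $\Sp(\Lambda)$ cannot reduce to a single point of $\RR\times\widehat{K}$. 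Thus $L$ is a non-singleton subset of $\RR\times\widehat{K}$, and as $\widehat{K}$ is countable and discrete, $\RR\times\widehat{K}$ embeds homeomorphically into $\ZZ\times\RR$; hence $L$ is homeomorphic to a non-singleton subset of $\ZZ\times\RR$, so $\widehat{G}$, and therefore $\gg$, is of $1$-dimensional type. The main obstacle is exactly this final step, namely guaranteeing that the degenerating family genuinely splits into at least two distinct limits: the decisive input is that the $K$-type of $\pi_t$ is constant in $t$ and carries $\ge 2$ irreducible components, while a secondary technical point is arranging, through the weight-space quotient, that the relevant orbits are compact so that the $\pi_t$ are honestly irreducible.
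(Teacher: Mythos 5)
Your overall strategy coincides with the paper's: normalize $\gg=\ng\rtimes(\RR\oplus\kg)$ with a one-dimensional complement on which $\kg$ acts trivially, deform a family of monomial representations whose restrictions to $N$ collapse onto the trivial character, show that every limit point annihilates $N$, and exhibit at least two limit points inside $\widehat{G/N}\simeq\widehat{K}\times\RR\hookrightarrow\ZZ\times\RR$. Your argument for the existence of two limit points (the quasi-regular representation restricted to $K$ carries at least two irreducible $K$-types, and weak containment between irreducible representations of a compact group forces equivalence) is a clean substitute for the paper's route, which instead shows that the limit representation is not a factor representation via Lemma~\ref{nonfact}, the centrality of $A$ in $H=A\times K$, and \cite[Cor. 5.2.5]{Di64}; both are correct and essentially equivalent in content.

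The genuine gap is in your reduction step. You assert that $\ee^{sD}$ has relatively compact range ``because all roots of $\rg$ are purely imaginary'', and consequently that on a single weight space $D$ acts either trivially (case (A)) or as a rotation $aJ$ (case (B)). Purely imaginary roots constrain only the spectrum of $D$, not its Jordan structure. The hypothesis that $\ng$ is the nilradical forces $D$ to be non-nilpotent (otherwise $\rg$ itself would be nilpotent, since $C^{k+1}(\rg)=D^k\ng$), but it allows a nonzero nilpotent Jordan part: for instance $\ng=\RR^4\otimes W$ with $D=\bigl(\begin{smallmatrix}J&I\\ 0&J\end{smallmatrix}\bigr)\otimes\id_W$, $J$ a rotation generator and $W$ a nontrivial $\kg$-module acted on through the second tensor factor, has roots $\pm\ie$ while $\ee^{sD}$ is unbounded. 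On a single generalized weight space $D$ is $\ie\lambda+(\text{nilpotent})$, not a semisimple rotation, so your orbit $\Oc=(\RR\times K)\xi$ need not be compact, and everything downstream — the realization of $\pi_t$ on $L^2(\ee^{-t}\Oc)$, the convergence $\ee^{-t}\Oc\to\{0\}$ in the upper topology (hence triviality of the limits on $N$), and the $t$-independence of the $K$-type — rests precisely on that compactness. The gap is repairable: write $D=S+N_0$ for the Jordan decomposition; since $\alpha(\kg)$ commutes with $D$, it commutes with $S$ and $N_0$, so $N_0\ng$ is an ideal of $\gg$; moreover $\alpha(\kg)\ng\not\subseteq N_0\ng$, since otherwise, taking a $\kg$-invariant complement $C$ of $N_0\ng$ in $\ng$, one gets $\alpha(\kg)C\subseteq C\cap N_0\ng=\{0\}$ and hence $\alpha(\kg)\ng=\alpha(\kg)N_0\ng=N_0\alpha(\kg)\ng=\dots=N_0^k\alpha(\kg)\ng=\{0\}$, contradicting $\alpha\ne0$. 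Thus, after quotienting by $N_0\ng$ (legitimate by Remark~\ref{one-dim}), you may assume $D$ semisimple, and only then are your cases (A)/(B) and the compactness of $\Oc$ available. It is worth noting that the paper's own proof needs the same care at the corresponding point: its appeal to Lemma~\ref{modelind}\eqref{modelind_item4} requires the $H$-orbit of $\xi_1$ to be relatively compact, i.e. $\xi_1$ should be chosen to annihilate $N_0\ng$, which is possible by the computation just given.
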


\begin{proof}
	First note that, since $\alpha(\kg)\ng\subseteq\ng$ and $\kg$ is a 
	semisimple 
	Lie algebra, there exists a subalgebra $\ag\subseteq\rg$ satisfying $\alpha(\kg)\ag\subseteq\ag$, $\rg=\ng\dotplus\ag$, and $\dim\ag=1$. 
	(See the proof of Lemma~\ref{co-L2} for a similar situation.) 
	
We define $\gg:=\rg\rtimes_\alpha\kg$. 
Just as in the proof of Lemma~\ref{co-L3}\eqref{co-L3_item3}, 
the conditions $\rg=\ng\dotplus\ag$ and $\alpha(\kg)\ag\subseteq\ag$ actually imply $\alpha(\kg)\ag=\{0\}$ and $\alpha(\kg)\ng\ne\{0\}$. 
In particular, the direct product $\hg:=\ag\times\kg$ is a subalgebra of $\gg$ 
and we have the semidirect product decomposition 
\begin{equation*}
\gg=\ng\rtimes\hg.
\end{equation*}
	We will use the duality pairing $\langle\cdot,\cdot\rangle\colon\ng^*\times\ng\to\RR$. 

The property $\alpha(\kg)\ng\ne\{0\}$ implies that there exists $\xi_1\in\ng^*$ with $\langle\xi_1,\alpha(\kg)\ng\rangle\ne\{0\}$ 
and then the isotropy Lie algebra $\kg(\xi_1):=\{x\in\kg\mid \langle\xi_1,\alpha(x)\ng\rangle=0\}$ satisfies 
\begin{equation}
	\label{co-L5_proof_eq1}
	\kg(\xi_1)\subsetneqq\kg. 
\end{equation}
	For arbitrary $t\in\RR$ we now define $\xi_t:=t\xi_1\in\ng^*$ 
and $\chi_t\in\Hom(N,\TT)$ with 
$(\chi_t)'_\1=\ie\xi_t=\ie t\xi_1\colon\ng\to\ie\RR$. 
In particular, $\chi_0(x)=1$ for every $x\in N$. 

If $t\ne0$, then 
$H(\chi_t) = H(\chi_1)$ and  the hypothesis $[\ng,\ng]=\{0\}$ implies 
\begin{equation}
	\label{co-L5_proof_eq2}
	G(\chi_t)=N\rtimes H(\chi_t)=N\rtimes H(\chi_1)=G(\chi_1)
\end{equation}
and the Lie algebra of that group is 
\begin{equation*}
	\gg(\chi_t)=\ng\rtimes\hg(\xi_t)\subsetneqq\ng\rtimes\hg=\gg
\end{equation*}
where the strict inclusion follows by \eqref{co-L5_proof_eq1}.

	For every $t\in\RR$ we now define 
$$\widetilde{\chi_t}\in\Hom(G(\chi_1),\TT), 
\quad \widetilde{\chi_t}\vert_\ng=\chi_t\text{ and } \widetilde{\chi_t}\vert_{H(\chi_1)}=1$$ 
and the induced representation 
$$\pi_t:=\Ind_{G(\chi_t)}^G(\widetilde{\chi_t}).$$
If $t\ne 0$, then, by \eqref{co-L5_proof_eq2}, we have 
$$\pi_t=\Ind_{G(\chi_1)}^G(\widetilde{\chi_t})$$
and moreover $\pi_t$ is an irreducible representation of $G$ by Mackey's theorem 
\cite[Thm. 8.1]{Ma58}. 
(See also \cite[Ch. III]{AuMo66}.)

Since $\lim\limits_{t\to0}\chi_t=\chi_0$ in $\Hom(N,\TT)$, we have $\lim\limits_{t\to 0}\widetilde{\chi_t}=\widetilde{\chi_0}$ in $\Hom(G(\chi_1),\TT)$ and then, 
it follows by the continuity of induction, cf. \cite[Thm. 4.2, page 260]{Fe62}, 
that 
for $t\to0$ we have $[\pi_t]\to[\pi_0]$ in the space $\Tc^\sim(G)$ of unitary equivalence classes of unitary representations of $G$ on separable Hilbert spaces, 
with respect to the inner hull-kernel topology of $\Tc^\sim(G)$. 
(See the proof of Lemma~\ref{L10} for a similar reasoning.) 
On the other hand, using \eqref{co-L5_proof_eq1} and Lemma~\ref{nonfact}, one can prove that the representation $\pi_0$ of $G$ is not a factor representation. 
More specifically, $\pi_0$ is the so-called quasiregular representation of $G$ on $L^2(G/G(\chi_1))$. 
Since by \eqref{co-L5_proof_eq2} we have $G/G(\chi_1)=(N\rtimes H)/(N\rtimes H(\chi_1))
\simeq H/ H(\chi_1)$,  it suffices to prove that the quasiregular representation $\lambda$ of $H$ on $L^2(H/ H(\chi_1))$ is not a factor representation. 
In fact, by Lemma \ref{nonfact}, the restricted representation $\lambda\vert_K$ is not a factor representation, that is, it generates a von Neumann algebra with nontrivial centre. 
On the other hand,  $H=A\times K$ and the abelian group $A$ is contained in the centre of $H$, hence the centre of $\lambda(K)''$ commutes with $\lambda(A)''$. 
Therefore the centre of $\lambda(K)''$ is contained in the centre of $\lambda(H)''$, and thus $\lambda$ is not a factor representation.

We then obtain that the set 
of limit points of $[\pi_t]$ in $\Tc^\sim(G)$ for $t\to 0$
contains at least two points. 

We now prove that the above set of limit points is homeomorphic to a subset of $\widehat{K}\times\widehat{A}\simeq\ZZ\times\RR$. 
To this end, let $\pi$ be an arbitrary irreducible representation of $G$ with 
$[\pi_t]\to[\pi]$ as $t\to 0$.
It then follows by the continuity of the restriction, 
cf. \cite[Cor., page 371]{Fe60}, 
that 
for $t\to 0$ we have $[\pi_t\vert_N]\to[\pi\vert_N]$ in $\Tc^\sim(N)$.
This implies 
$N\subseteq\Ker\pi$ 
by Lemma~\ref{modelind}\eqref{modelind_item4}
applied for $\Vc:=\ng$, which is abelian by hypothesis. 
 
It now follows by the Lie group isomorphism 
$G/N\simeq H=A\times K$ that the unitary equivalence class $[\pi]$ 
can be regarded as an element of the closed subset 
$\widehat{A\times K}\hookrightarrow\widehat{G}$. 
This completes the proof of that fact that the set of all limit points of $[\pi_t]$ for $t\to 0$ is a homeomorphic to a non-singleton subset of $\widehat{K}\times\widehat{A}\simeq\ZZ\times\RR$, and we are done.
\end{proof}

\begin{proposition}
\label{co-P}
If $G$ is a 1-connected Lie group of type~\I\ with co-compact nontrivial radical 
whose roots are purely imaginary, 
then $\widehat{G}$ is either nonconnected or of 1-dimensional type.  
\end{proposition}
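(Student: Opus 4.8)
The plan is to fix a Levi decomposition $\gg=\rg\rtimes_\alpha\kg$, in which $\rg$ is the radical and the Levi factor $\kg$ is, by co-compactness of the radical, the Lie algebra of the compact $1$-connected group $G/R$, hence compact semisimple; we may assume $\kg\ne\{0\}$ (the case $\kg=\{0\}$ being that of a solvable $G$). Writing $\ng$ for the nilradical of $\rg$ (Remark~\ref{radicals}) and recalling from Remark~\ref{one-dim} that it suffices to produce a quotient Lie algebra of $\gg$ that is of $1$-dimensional type (or to detect nonconnectedness of $\widehat G$ directly), I would separate the discussion according to whether $\alpha$ vanishes.

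If $\alpha=0$, then $\gg=\rg\times\kg$ and $G\simeq R\times K$, where $K$ is the compact $1$-connected group with Lie algebra $\kg$. As $\kg\ne\{0\}$, the discrete space $\widehat K$ has at least two points, and under the identification $\widehat G\simeq\widehat R\times\widehat K$ the space $\widehat G$ is a disjoint union of clopen copies of $\widehat R$ indexed by $\widehat K$, hence nonconnected.

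Suppose now $\alpha\ne0$. Passing to the quotient $\gg_0:=\gg/[\ng,\ng]=\rg_0\rtimes_{\alpha_0}\kg$, Lemma~\ref{co-L3} yields that $\rg_0$ is metabelian with abelian nilradical $\ng_0$ and that $\alpha_0\ne0$; the roots of $\rg_0$ stay purely imaginary, being a subfamily of those of $\rg$. Applying Lemma~\ref{co-L4} to $\gg_0$, in case~\eqref{co-L4_item2} the quotient $\gg_1:=\gg_0/\ag_1=\rg_1\rtimes\kg$ meets all the hypotheses of Lemma~\ref{co-L5} (metabelian $\rg_1$ with abelian nilradical $\ng_1$, purely imaginary roots, nonzero induced action, and $\dim(\rg_1/\ng_1)=1$), so $\gg_1$ is of $1$-dimensional type; since $\gg_1$ is a quotient of $\gg$, Remark~\ref{one-dim} settles this case.

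The remaining case~\eqref{co-L4_item1} is where I expect the real work. Here $\rg_0/\ng^1$ is nilpotent and carries a nonzero action of $\kg$ (as $\alpha_0(\kg)\ng^0\ne0$ and $\ng^0$ embeds into $\rg_0/\ng^1$); quotienting once more by the derived subalgebra and using Lemma~\ref{co-L2}, I reduce to a motion group $G'=\Vc\rtimes K$ with $\Vc$ abelian nonzero, $K$ compact semisimple, and nonzero action, which is again a quotient of $\gg$. To show that $G'$ is of $1$-dimensional type I would run the Mackey analysis packaged in Lemma~\ref{modelind}: picking $\xi\in\Vc^*$ with $K_\xi\subsetneqq K$ and letting the orbit shrink to the origin ($t\to\infty$ in the notation $\xi_t=\ee^{-t}\xi$), the irreducible representations $\Ind_{\Vc\rtimes K_\xi}^{G'}(\chi_t)$ converge to the representation $\sigma_0$ that is trivial on $\Vc$ and equals the quasi-regular representation $\Ind_{K_\xi}^{K}(\1)$ of $K$. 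By Lemma~\ref{modelind}\eqref{modelind_item4} every limit point of this family is trivial on $\Vc$, hence lies in the discrete closed subset $\widehat K\hookrightarrow\widehat{G'}$; and since $K_\xi$ has positive codimension in the compact connected group $K$, the representation $\Ind_{K_\xi}^{K}(\1)$ has infinitely many irreducible constituents, each of which is a cluster point of the family by the argument used in the proof of Lemma~\ref{L10}. Consequently the limit set of this sequence is a non-singleton discrete subset of $\widehat K$, thus homeomorphic to a subset of $\ZZ\times\RR$, so $G'$ and therefore $\gg$ is of $1$-dimensional type. The two delicate points to secure are the identification of the limit representation (via continuity of induction and of restriction) and the fact that this limit set is genuinely discrete, so that it falls under the definition of $1$-dimensional type rather than under the Heisenberg-type behaviour excluded in Example~\ref{co-Ex}.
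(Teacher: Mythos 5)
Your proposal is correct and follows essentially the same route as the paper's proof: the same dichotomy between trivial and nontrivial action of the Levi factor (the trivial case giving nonconnectedness via $\widehat{R}\times\widehat{K}$), the same passage to the quotient by $[\ng,\ng]$ using Lemma~\ref{co-L3}, and the same case split provided by Lemma~\ref{co-L4}, with case~\eqref{co-L4_item2} settled by Lemma~\ref{co-L5}. The only difference is expository: your detailed treatment of case~\eqref{co-L4_item1} --- reduction to a motion group $\Vc\rtimes K$ via Lemma~\ref{co-L2}, then the shrinking-orbit analysis of Lemma~\ref{modelind}\eqref{modelind_item4} combined with the cluster-point argument from the proof of Lemma~\ref{L10} to see that the limit set is a non-singleton subset of the discrete space $\widehat{K}$ --- spells out precisely what the paper compresses into its one-line appeal to Lemma~\ref{co-L2}.
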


\begin{proof}
If $R$ is the radical of $G$ and $K$ is a Levi subgroup of $G$, 
then $K$ is compact by hypothesis. 
Moreover, $K$ is 1-connected 
and $G$ can be written as the semidirect product decomposition $G=RK$ 
by the global Levi decomposition.
(See for instance \cite[Cor. 5.6.8 and Lemma 13.3.7]{HN12}.) 
If $rk=kr$ for all $r\in R$ and $k\in K$, then we have a homeomorphism 
$\widehat{G}\simeq\widehat{R}\times\widehat{K}$. 
Since $K$ is connected, compact and nontrivial, it follows that $\widehat{K}$ is homeomorphic to $\ZZ$, which shows that $\widehat{G}$ is nonconnected. 

We now assume that the natural action of $K$ on $R$ is nontrivial, 
and we prove that $\widehat{G}$ is of 1-dimensional type. 
To this end it suffices to show that the quotient of $G$ by some closed subgroup is of 1-dimensional type. 
This will be done in two steps.

Step 1: Let $N$ be the nilradical of $R$. 
Then both $N$ and its commutator subgroup $[N,N]$ are 1-connected closed normal subgroups of $G$. 
The quotient group $G/[N,N]$ has its Levi decomposition $G/[N,N]=(R/[N,N])K$, 
whose factors do not commute with each other by Lemma~\ref{co-L3}. 
We will show that $G/[N,N]$ is of 1-dimensional type. 
By the discussion above, we may
replace $G$ by $G/[N,N]$ 
and we thus assume without loss of generality that $N$ is abelian. 

Step 2: 
Since the nilradical $N$ of $R$ is abelian, it follows by Lemma~\ref{co-L4} 
that at least one of the following assertions holds: 
\begin{enumerate}[{\rm(i)}]
	\item\label{co-P_item1} 
	There exists a 1-connected closed normal subgroup $N^1$ of $G$ with $N^1\subseteq N$ such that $R/N^1$ is nilpotent and the factors in the Levi decomposition $G/N^1=(R/N^1)K$ do not commute with each other. 
	\item\label{co-P_item2} 
	There exists a 1-connected closed normal subgroup $A_1$ of $G$ with $A_1\subseteq R$ such that, if we denote $R_1:=R/A_1$ and $N_1$ the nilradical of $R_1$, 
	then $R_1$ is a metabelian Lie group of type~\I, 
	$\dim(R_1/N_1)=1$, and the factors in the Levi decomposition 
	$G/A_1=R_1 K$ do not commute with each other. 
\end{enumerate}
If the above Assertion~\eqref{co-P_item1} holds true, then Lemma~\ref{co-L2} shows that the group $G/N^1$ is of 1-dimensional type, and we are done. 
On the other hand, if Assertion~\eqref{co-P_item2} in Step~2 holds true, 
then the group $G/A_1$ is of 1-dimensional type by Lemma~\ref{co-L5}, and this completes the proof. 
\end{proof}

\begin{proof}[Proof of Proposition~\ref{cocomp}]
Use Proposition~\ref{co-P} and Example~\ref{co-Ex}. 
\end{proof}

\section{Proof of the main theorem and open problems}
\label{sect:proof}

\begin{proof}[Proof of Theorem~\ref{dual_main}]
We organize the proof in four steps. 

Step 1: Since $H$ is a liminary group, it follows by\cite[9.5.3]{Di64} that $G$ is in turn liminary, in particular type \I\ and $\widehat{G}$ is $T_1$, hence $G=G_1\times (S\rtimes K)$, where $G_1$ is semisimple, $S$ is solvable, and $K$ is compact semisimple.  
(See \cite[proof of Prop. 4.7]{BB_RIMS}.)

Step 2: We then obtain $\widehat{G_1}\times\widehat{S\rtimes K}=\widehat{G}\simeq\widehat{H}$, hence, by \cite[Lemma 4.6]{BB_RIMS}, 
we have either $G=G_1$ or $G=S\rtimes K$. 

Step 3: If $G=G_1$ is semisimple, then by \cite[Cor. 5.8]{Mi74} 
every net in $\widehat{G}$ has at most finitely many limit points. 
(See also \cite{Mi73}.)
Therefore we cannot have $\widehat{G}\simeq \widehat{H}$.

Step 4: If  $G=S\rtimes K$, then by Proposition~\ref{cocomp} we obtain $G=S$, and then by Proposition~\ref{P13} we finally obtain  $G\simeq H$. 
\end{proof}

The natural question arises about finding versions of Theorem~\ref{dual_main} 
with the Heisenberg groups replaced by other 1-connected Lie groups. 
Example~\ref{E14} shows a class of examples of nilpotent Lie groups for which the corresponding versions of Theorem~\ref{dual_main} fail to be true. 

On the other hand, we proved in \cite{BB-JTA} that there are many nilpotent Lie groups that are uniquely determined by the (Morita-)isomorphism class of their group $C^*$-algebras. 
We are thus lead to raising the following problem. 

\begin{problem}
\normalfont 
Do there exist nilpotent Lie groups $G_1$ and $G_2$ with $\widehat{G_1}\simeq\widehat{G_2}$, for which $C^*(G_1)$ and $C^*(G_2)$ fail to be Morita equivalent/$*$-isomorphic? 
\end{problem}

We also refer to \cite{BB-IEOT} for a discussion of pairs of  1-connected solvable Lie groups that are non-isomorphic although their group $C^*$-algebras are $*$-isomorphic. 

\subsection*{Acknowledgment} 
We wish to thank Professor Dragan Mili\v ci\'c for several illuminating remarks and for kindly sending us the preprint \cite{Mi73}.

\end{document}